\documentclass[10pt]{article}
\usepackage[margin=1.5in]{geometry}

\usepackage{amsmath}

\usepackage{multirow} 
\usepackage{booktabs}
\usepackage{bbold}
\RequirePackage{amsthm,amsmath,amsfonts,amssymb}
\RequirePackage[numbers]{natbib}
\RequirePackage[colorlinks,citecolor=blue,urlcolor=blue]{hyperref}
\RequirePackage{graphicx}

\newtheorem{theorem}{Theorem}[section]
\newtheorem{corollary}[theorem]{Corollary}
\newtheorem{lemma}[theorem]{Lemma}
\newtheorem{definition}[theorem]{Definition}

\theoremstyle{remark}
\newcommand{\khatxn}{K_{X_n,J_n}}
\usepackage[title]{appendix}

\begin{document}

\title{A Sieve Stochastic Gradient Descent Estimator for Online Nonparametric Regression in Sobolev ellipsoids}

\author{Tianyu Zhang \and Noah Simon}

\maketitle

\begin{abstract}
 The goal of regression is to recover an unknown underlying function that best links a set of predictors to an outcome from noisy observations. In nonparametric regression, one assumes that the regression function belongs to a pre-specified infinite-dimensional function space (the hypothesis space). In the online setting, when the observations come in a stream, it is computationally-preferable to iteratively update an estimate rather than refitting an entire model repeatedly. Inspired by nonparametric sieve estimation and stochastic approximation methods, we propose a sieve stochastic gradient descent estimator (Sieve-SGD) when the hypothesis space is a Sobolev ellipsoid. We show that Sieve-SGD has rate-optimal mean squared error (MSE) under a set of simple and direct conditions. The proposed estimator can be constructed with a low computational (time and space) expense: We also formally show that Sieve-SGD requires almost minimal memory usage among all statistically rate-optimal estimators. 
\end{abstract}

\section{Introduction}

It is commonly of interest to understand the association between a number of features (or predictors) and a quantitative outcome. To this end, one often estimates an underlying regression function that best links these two quantities from noisy observations. More formally, suppose we obtain $n$ samples, $\left(X_i,Y_i\right)$, where $X_i\in\mathcal{X}\subset \mathbb{R}^p$ denotes a $p$-vector of features from the $i$-th sample we observe, and $Y_i\in\mathbb{R}$ denotes the $i$-th outcome. Further suppose that each pair $\left(X_i,Y_i\right)$ is independently and identically distributed (i.i.d.) from a fixed but unknown distribution $\rho$ over $\mathcal{X}\times \mathbb{R}\subset \mathbb{R}^p\times \mathbb{R}$. A common target of estimation is the conditional mean $f_{\rho}(X) := E_{\rho}[Y|X]$. Under extremely mild conditions, this conditional mean is the optimal function for predicting $Y$ from $X$ with regard to mean squared error. More formally,
\begin{equation}
     f_{\rho} = \operatorname{argmin}_{f\in L^2_{\rho_X}}  E_{\rho}\left[\left(Y-f(X)\right)^2\right],
 \end{equation}
where $L^2_{\rho_X}$ is the collection of all $\rho_X$-mean square integrable functions and $\rho_X$ is the marginal distribution of $X$. Our goal is to estimate $f_{\rho}$ from our collection of observed data.

In order to make a tractable estimation of $f_{\rho}$ from data, we need to make additional assumptions on its smoothness/structure: The entire $L^2_{\rho_X}$ space is too big to search within \cite{belkin2018reconciling,liang2018just}. We often formally assume that $f_{\rho}$ belongs to a pre-specified function space $\mathcal{F}\subsetneqq L^2_{\rho_X}$. This $\mathcal{F}$ is known as the \emph{hypothesis space} of the regression problem. 

If $\mathcal{F}$ can be indexed by a finite-dimensional parameter set $\Theta\subset\mathbb{R}^d$, $d\in\mathbb{N}^+$, we refer to $\mathcal{F}$ as a \emph{parametric function space} or a \emph{parametric class}. One common parametric class is $\mathcal{F} = \{X^{\top}\beta\,|\,\beta\in\mathbb{R}^d\}$, the set of all linear functions of $X$. Parametric classes can impose overly restrictive assumptions on the form of the regression function that may not be realistic in practice. As such, it has become popular to assume less restrictive structure: It is common to define the hypothesis space based on constraints on derivatives, monotonicity, or other shape-related properties. Such an $\mathcal{F}$ is most naturally written as an infinite-dimensional subset of $L^2_{\rho_X}$. Commonly used examples of $\mathcal{F}$ in the statistics community include H\"older balls, Sobolev spaces \cite{gyorfi2006distribution,nemirovski2000topics,wahba1990spline}, reproducing kernel Hilbert spaces (RKHS) \cite{berlinet2011reproducing,christmann2008support} and Besov spaces \cite{hardle2012wavelets}. These are known as \emph{nonparametric function spaces}, as they cannot naturally be parametrized using a finite length vector. The Sobolev ellipsoid, in particular, is a simple and useful abstraction of many important function spaces \cite{wahba1990spline}. Therefore, we focus on them exclusively as the hypothesis spaces in this paper.

In this paper, we propose an estimator for \emph{online} nonparametric regression. In online estimation, the data are seen sequentially, one sample at a time. After each sample is observed, our estimate of $f_{\rho}$ must be updated, as a prediction may be required at any point in time before all the available samples are processed. In an online problem with $n$ observations, we must sequentially construct $n$ estimates. This is in contrast to the classical batch learning setting where we collect all the data initially and perform estimation only once. In the online setting, it is generally computationally infeasible to repeatedly refit the whole model from scratch for each new observation. Thus, online algorithms are generally carefully developed to permit more tractable \emph{updates} after each new observation \citep{duchi2014multiple,kushner2003stochastic}.

An ideal estimator in online settings should be: i) statistically rate-optimal, i.e. achieve the minimax-rate for estimating $f_{\rho}$ over $\mathcal{F}$; and ii) computationally inexpensive to construct/update. In this paper, we present such an online nonparametric estimator for use when the hypothesis space is a Sobolev ellipsoid, which we term the \emph{Sieve Stochastic Gradient Descent estimator (Sieve-SGD)}. This method can be thought of as an online version of the classical projection estimator \cite{introtononpara}, where the latter is a specific example of sieve estimators \cite{geer2000empirical,shen1997methods}. We use the more general term ``sieve" in naming our method to emphasize its nonparametric nature and avoid confusion with the term ``stochastic projection" \cite{vempala2005random}. We will show that Sieve-SGD can achieve rate-optimal estimation error for $\mathcal{F}$ a Sobolev ellipsoid and asymptotically uses minimal memory (up to a log factor) among all rate-optimal estimators. In addition, our estimator has the same computational cost (up to a constant) as merely examining each allocated memory location every time a new sample $X_i$ is collected. This intimates that in scenarios when our estimator has near optimal space complexity, it may also have near optimal time complexity (though formal investigation of lower bounds for time complexity in this problem is beyond the scope of the current manuscript).

The structure of our paper continues as follows. In Section~\ref{section:batch learning} we briefly cover classical results for batch, nonparametric estimation in Sobolev ellipsoids, focusing on projection estimators (which motivate our method). In Section~\ref{section:online learning} we return to the online setting and explore intuition for how one might combine projection estimation and stochastic gradient descent (SGD) \cite{bottou2010large}. The latter is a well-studied method that has been applied fruitfully to online parametric regression problems. This will help motivate our proposed method, which, as we will see, can be thought of as an SGD estimator with a parameter space of increasing dimension. In Section~\ref{section: related work} we discuss existing nonparametric SGD estimators, and identify some notable drawbacks of current methods. In Section~\ref{section:Sieve-SGD}, we introduce the formal construction of Sieve-SGD and analyze its computational expense. From there, we show that our estimator has a dramatically smaller ``dimension" than existing methods and discuss how this helps to reduce the computational expense. In Section~\ref{section:theory}, we give a theoretical analysis of the statistical properties of Sieve-SGD. In constructing our estimator, we need to decide how quickly to grow the dimension it projects onto. Under minimal assumptions, we characterize the required growth rate and learning rate for our estimator to be statistically and computationally (near) optimal. We will also investigate under what conditions such an optimality result is adaptive/insensitive to our choice of the ``dimension-specific learning rate". Section~\ref{section:simulation} provides simulation studies to illustrate our theoretical results. Finally, in Section~\ref{section:discussion}, we have some further discussion of Sieve-SGD and possible future research directions.
 
 \textit{Notation:} In this paper, we use $C$ to denote a generic constant that does not depend on sample size $n$ (The value of $C$ may be different in different parts of the manuscript). Additionally the notation $a_n = \Theta(b_n)$ means $a_n = O(b_n)$ and $b_n = O(a_n)$. The function $\lfloor x\rfloor$ maps $x$ to the largest integer smaller than $x$. For a vector $x\in\mathbb{R}^p$, $x^{(i)}$ is the $i$-th component of $x$. The notation $x\vee y$ (resp. $x\wedge y$) is shorthand for $\max\{x,y\}$ (resp. $\min\{x,y\}$). The $\|\cdot\|_{\infty}$ norm of a continuous function $f$ is defined as $\|f\|_{\infty} = \sup_{x\in\mathcal{X}}|f(x)|$, where $\mathcal{X}$ is the domain of $f$.

\section{Batch Learning and the Projection Estimator}
\label{section:batch learning}
In this section we consider estimation in the classical batch setting where our estimate is constructed once after all $n$ samples are observed. We will begin by formally introducing a Sobolev ellipsoid: This is the hypothesis space we will use throughout this manuscript. This will be followed by presenting the classical projection estimator \cite{introtononpara}.

Consider a user-specified measure $\nu$ whose support contains $\mathcal{X}$, and the corresponding square-integrable function space $L^2_{\nu}$.  In many interesting cases $\nu$ can be simply taken as Lebesgue measure over $\mathcal{X}$ but it is not necessary in the general form of our theory. To define a Sobolev ellipsoid in $L^2_{\nu}$, suppose we have a complete orthonormal basis $\{\psi_j, j=1,2,...\}\subset L^2_{\nu}$ of $L^2_{\nu}$ \cite{hernandez1996first}. This means
\begin{enumerate}
    \item[i)] For any $f\in L^2_{\nu}$, there exists a unique sequence $(\theta_j)_{j=1}^{\infty}\in \ell^2$ such that 
\begin{equation}
    \lim_{N\rightarrow\infty}\int \left|f(z)-\sum_{j=1}^{N}\theta_j\psi_j(z)\right|^2d\nu(z) = 0\quad\text{(completeness)}
\end{equation}
where $\ell^2$ is the space of square convergent series.
    \item[ii)] $\{\psi_j\}$ is an orthonormal system:
\begin{equation}
  \int \psi_i(z)\psi_j(z)d\nu(z) = \delta_{ij}\quad\text{(orthonormality)}  
\end{equation}
where $\delta_{ij}$ is the Kronecker delta.
\end{enumerate}

We define the \emph{Sobolev ellipsoid} $W(s,Q,\{\psi_j\})$ as:
\begin{equation}
W\left(s, Q, \{\psi_j\}\right) :=\left\{\left.f=\sum_{j=1}^{\infty} \theta_{j} \psi_{j}\ \right|\ \sum_{j=1}^{\infty}\left(\theta_{j} j^{s}\right)^{2} \leq Q^2\right\}
\end{equation}
We refer to $(\theta_j)_{j=1}^{\infty}$ as the (general) \textit{Fourier coefficients} of a function $f$. Throughout this manuscript, we assume the measure $\nu$, basis functions $\psi_j$ and the regularity parameter $s$ are all known. When it is clear which $\psi_j$ we are using, we will denote a Sobolev ellipsoid simply by $W(s,Q)$. We may also use the further simplified notation $W(s)$ because the diameter $Q$ usually plays a secondary role in our theoretical analysis and the proposed method is adaptive to it. Intuitively, by saying a function $f$ belongs to a Sobolev ellipsoid, we are requiring its coefficients $\{\theta_j\}$ to converge to zero faster than $j^{-(s+1/2)}$ (if not, the sum $\sum_{j=1}^{\infty}\left(\theta_{j} j^{s}\right)^{2}$ would diverge to infinity). The larger $s$ is, the faster the decay of $\theta_j$ will be, and thus the stronger our assumption is. 

Sobolev ellipsoids are popular spaces to study for two reasons: 1) They impose a useful structure for theories and computations, especially as a basic example of hypothesis spaces with finite metric entropy; and 2) Many natural spaces of regular functions are Sobolev ellipsoids. For example, if $\mathcal{X} = [0,1]$ with $\nu$ as Lebesgue measure, then for any $s>0$, the periodic Sobolev space 
\begin{equation}
  \mathcal{F} = \left\{f\in L^2_{\nu}\mid \int \left(f^{(s)}(x)\right)^2dx < Q^2, f^{(k)}(0)=f^{(k)}(1),k =0,1,..,s-1\right\}  
\end{equation}
can be written as a Sobolev ellipsoid, using an orthogonal basis of trigonometric functions \cite[Chapter~2]{wahba1990spline}. More generally, for any RKHS $\left(\mathcal{H},\langle \cdot,\cdot\rangle_{\mathcal{H}}\right)$, it is possible to find a set of $\psi_j$ such that $W(s,Q,\{\psi_j\}) = \{f\in\mathcal{H}\ |\ \|f\|_{\mathcal{H}}\leq Q\}$ , i.e. a ball in an RKHS is a Sobolev ellipsoid (see \cite{cucker2002mathematical,sun2005mercer}). 

In everything that follows we will assume that $f_{\rho}$, our target of estimation, lives in a known Sobolev ellipsoid $W\left(s, Q, \{\psi_j\}\right)$; with $\{\psi_j\}$ specified, and orthonormal w.r.t. a specified measure $\nu$ (not necessarily equal to $\rho_X$); and $s$ known (we allow $Q$ to be unknown).

The \emph{Projection Estimator} is a classical estimator naturally associated with a Sobolev ellipsoid. We can treat it as a special case of general sieve estimation \cite[Chapter~10]{geer2000empirical}: The estimates can be characterized by a sequence of finite dimensional linear spaces of increasing dimension (the dimension increases with sample size). For any given $f\in W(s,Q)$, the magnitude of its Fourier coefficients must asymptotically decrease with $j$ fast enough. Thus, it might be sensible to consider an estimator that discards the basis functions far into the tail. This is precisely what the projection estimator does. More formally, for a user-specified truncation level $J_n$, the projection estimator is given by 
\begin{equation}
    \hat{f}_{n,J_n} = \sum_{j=1}^{J_n}\hat\theta_j\psi_j
\end{equation}
where $\hat{\theta}=(\hat{\theta}_1,..,\hat{\theta}_{J_n})^{\top}$ is the solution of the least square problem:
\begin{equation}\label{NPLS}
\min_{\theta\in\mathbb{R}^{J_n}} \sum_{i=1}^n \left(Y_i - \sum_{j=1}^{J_n} \theta_j\psi_j(X_i)\right)^2
\end{equation}

It has been shown (e.g. \cite{introtononpara}, Theorem 1.9) that when we choose $J_n = \Theta( n^{\frac{1}{2s+1}})$, the projection estimator is a rate-optimal estimator over $W(s,Q)$, i.e.
\begin{equation}
    \limsup_{n\rightarrow\infty} \sup_{f_{\rho}\in W(s,Q)} E\left[\left\|\hat f_{n,J_n} - f_{\rho}\right\|_{2}^2\right] = O(n^{-\frac{2s}{2s+1}})
\end{equation}
This result is usually shown in the literature for $X_i$ equally spaced, or drawn from a uniform distribution. But in our theoretical analysis (Section~\ref{section:theory}), we allow $\rho_X$ to be a much more general distribution.

Sieve-SGD is inspired by this (batch) projection estimator. The key here is that the number of basis functions we need to use can be dramatically smaller than sample size, and their analytical forms do not depend on the data (usually reproducing kernel methods use basis functions ``centered" at the feature vectors $X_i$). This possibility has been rarely explored \cite{zhang2021online} by existing nonparametric online estimation research.

\section{Online Learning and Stochastic Approximation}
\label{section:online learning}

We now move to the online learning setting where observations are collected sequentially from a data stream, and an estimate of our function is required after each sample. Such an infinite data stream may really exist, for example, with simulated samples as in reinforcement learning. Or the stream may serve as an abstraction used with large-scale data sets where it is not favorable to handle all the samples at once. It is generally computationally prohibitive to use a method developed for the ``batch'' setting and completely refit it after each observation. Instead methods that iteratively update are preferred. For example, fitting a single projection estimator (solving \eqref{NPLS}) with $n$ observations using $J_n=n^{\frac{1}{2s+1}}$ requires computation of $\Theta(n^{1+\frac{2}{2s+1}})$. Refitting a projection estimator (from scratch) after each observation $i=1,\ldots, n$ with $J_i = \lfloor i^{\frac{1}{2s+1}}\rfloor$ would require an accumulated computation of $\sum_{i=1}^n i^{1+\frac{2}{2s+1}} = \Theta(n^{2+\frac{2}{2s+1}})$. This scales worse than quadratically in $n$. Our goal in the online nonparametric setting is to find a statistically rate-optimal estimator whose computation scales only slightly worse than linearly in $n$.

Online learning has been thoroughly studied for parametric $\mathcal{F}$. Many proposed methods are based on the concept of \emph{stochastic approximation} \cite{kushner2003stochastic}. One of the most popular methods in stochastic approximation is \emph{Stochastic Gradient Descent} (SGD) \cite{bottou2010large}. In the parametric setting, SGD gives a statistically rate-optimal estimator $\hat f_n$ whose population mean-square-error $E\|\hat f_n - f_{\rho}\|_{L^2_{\rho_X}}^2$ is of order $O(\frac{1}{n})$ \cite{babichev2018constant,bach2013non,frostig2015competing}. Both vanilla SGD and its variants have been applied to general convex loss functions and are shown to be statistically rate-optimal under mild conditions \cite{duchi2014multiple}.

\subsection{Parametric SGD}
\label{subsection:parametric sgd}To motivate stochastic optimization in the nonparametric setting, we first give more details on SGD for parametric classes. Here we consider a specific class of functions $\mathcal{F} = \{f = \sum_{j=1}^d \beta^{(j)} \psi_j,\beta\in\mathbb{R}^d\}$ for a set of pre-specified basis functions $\psi_j:\mathbb{R}^p~\rightarrow~\mathbb{R},j = 1,\ldots,d$. We use this example to illustrate the principle of (parametric) SGD. Solving $\operatorname{argmin}_{f\in\mathcal{F}}E\left[(Y-f(X))^2\right]$ reduces to solving
\begin{equation}
\label{OP}
\min_{\beta\in\mathbb{R}^d}\ell(\beta):= \min_{\beta\in\mathbb{R}^d}E\left[\left(Y-\sum_{j=1}^d \beta^{(j)}\psi_j(X)\right)^2\right]
\end{equation}
We assume the minimizer of $\ell(\beta)$ exists and denote it as $\beta^{*}$.

If we knew the true joint distribution $\rho$ of $(X,Y)$ (which never happens in practice), then equation~\eqref{OP} is just a numerical optimization problem which does not involve data. We could use gradient descent to solve it. The gradient of $\ell$ at any point $\beta$ is
\begin{equation}
\begin{aligned}
        \nabla \ell(\beta) = -2E\left[\left(Y-\sum_{j=1}^d \beta^{(j)}\psi_j(X)\right) \left(\psi_1(X),...,\psi_d(X)\right)^{\top}\right]
\end{aligned}
\end{equation}
Thus, the gradient descent updating rule one could use is:
\begin{equation}
\label{paraSGD}
\begin{aligned}
    \hat{\beta}_{0} &= 0\\
    \hat{\beta}_n &= \hat{\beta}_{n-1} - \gamma_n \nabla \ell(\hat\beta_{n-1})
\end{aligned}
\end{equation}
where $\{\gamma_n\}$ is a pre-specified sequence of step-sizes (or learning rate) and $\hat\beta_n\in\mathbb{R}^d$ is the sequence of approximations of $\beta^{*}$. 

In practice, we do not know the joint distribution $\rho$: we must use data to estimate $\beta^*$. In the framework of SGD, this is done by using the data to get unbiased estimates of the gradients and substituting the estimates into our updating rule \eqref{paraSGD}. In particular we note that $\widehat{\nabla \ell(\beta)}:=-2 \left(Y_i-\sum_{j=1}^d \beta^{(j)}\psi_j(X_i)\right)(\psi_1(X_i),...,\psi_d(X_i))^{\top}$ is an unbiased estimator of the gradient $\nabla \ell(\beta)$ based on one sample. This results in the SGD updating rule.

\begin{equation}
\label{eq:paraSGDupdate}
\begin{aligned}
    \hat{\beta}_{0} &= 0\\
    \hat{\beta}_n &= \hat{\beta}_{n-1} - \gamma_n \widehat{\nabla \ell(\hat\beta_{n-1})}\\
    & =  \hat{\beta}_{n-1} + 2\gamma_n\left(Y_{n}-\sum_{j=1}^d \hat\beta_{n-1}^{(j)}\psi_{j}(X_{n})\right)(\psi_1(X_n),...,\psi_d(X_n))^{\top}
\end{aligned}
\end{equation}
So our estimator $\hat f_n$ of $f_{\rho}$ has the following functional update rule, derived from \eqref{eq:paraSGDupdate}:
\begin{equation}
\label{eq:paraSGD}
    \hat f_{n} = \hat f_{n-1} +2\gamma_n\left(Y_n - \hat f_{n-1}(X_n)\right)\sum_{j=1}^d \psi_{j}(X_n)\psi_j.
\end{equation}
Here we have shifted to considering our estimator $\hat{f}_{n}$ as a function, rather than thinking about $\hat{\beta}_n$ a vector of coefficients. This will be important in the nonparametric setting.

\subsection{From parametric SGD to nonparametric SGD}\label{sec:par_to_nonpar}
In this subsection we discuss the intuition in moving from SGD in a finite dimensional parametric space to an infinite dimensional space. 

We assume $f_{\rho}\in W(s,Q,\{\psi_j\})\subset L^2_{\nu}$. Since $\psi_j$ is a complete basis of $L^2_{\nu}$, we can always find an expansion of $f_{\rho}$ w.r.t. $\{\psi_j\}$:
\begin{equation}
    f = \sum_{j=1}^{\infty}\theta_j\psi_j.
\end{equation}
In Subsection \ref{subsection:parametric sgd}, we already discussed the SGD updating rule for a $d$-dimensional model $f(X) = \sum_{j=1}^d \beta^{(j)}\psi_j(X)$. In the nonparametric scenario, the number of basis function is increased from $d$ to infinity: This causes problems if care is not taken.

One might naturally consider applying a direct analog to the finite-dimensional SGD rule~\eqref{eq:paraSGD} here (we omit the constant 2):
\begin{equation}
\label{eq:beforescaledown}
    \hat f_{n} = \hat f_{n-1} +\gamma_n\left(Y_n - \hat f_{n-1}(X_n)\right)\sum_{j=1}^{\infty} \psi_{j}(X_n)\psi_j.
\end{equation}
Unfortunately we run into a severe problem: The series $\sum_{j=1}^{\infty} \psi_j(X_n)\psi_j$ does not converge even if all $\psi_j$ are bounded (it is direct to check when $X_n = 0$ and $\psi_j$ are trigonometric functions). However, as we assume $f_{\rho}\in W(s)$, we know that those higher order components, $\psi_j$, $j\gg 1$ should have very small coefficients. Thus, one natural solution is to use a different step size per component, that decreases as $j$ increases. By doing ``less fitting'' for larger $j$, we can stabilize our update (smaller variance), and yet might still appropriately fit the overall regression function. In particular one might modify \eqref{eq:beforescaledown} to

\begin{equation}
\label{mainform}
    \hat f_{n} = \hat f_{n-1} +\gamma_n\left(Y_n - \hat f_{n-1}(X_n)\right)\sum_{j=1}^{\infty} t_j\psi_{j}(X_n)\psi_j,
\end{equation}
where the component-specific (or dimension-specific) learning rate $t_j>0$ are monotonically decreasing with $j$. For $t_j$ decreasing fast enough and uniformly bounded $\psi_j$, the function series $\sum_{j=1}^{\infty} t_j\psi_j(X_n)\psi_j$ is absolutely convergent. Now \eqref{mainform} becomes a sensible nonparametric SGD updating rule when the hypothesis space is a Sobolev ellipsoid. In addition, sometimes $\sum_{j=1}^{\infty}t_j\psi_j(X_n)\psi_j$ actually has a simply characterized closed form (in particular, for many RKHS). In such cases, \eqref{mainform} results in a relatively straightforward algorithm. More specifically, one can show that when $t_j = j^{-2s}$ and $\gamma_n = \Theta(n^{-\frac{1}{2s+1}})$, the average 
\begin{equation}
\label{eq:polyak}
  \bar f_n:=\frac{1}{n}\sum_{i=1}^n\hat f_i
\end{equation}
is a rate-optimal estimator of $f_{\rho}\in W(s)$. This was recently proposed (though motivated quite differently) in the context of RKHS hypothesis spaces \cite{dieuleveut2016nonparametric}. The authors there engage directly with the \emph{kernel function} for the RKHS (though their updating rule is equivalent to eq~\eqref{mainform}). This will be discussed in more detail in Section~\ref{section: related work}. Our work engages and extends these ideas (in combination with sieve estimation) to form a statistically rate-optimal online estimator with greatly reduced computational and memory complexity. 

\section{Related work}
\label{section: related work}
Nonparametric online learning is a relatively new area.
A few remarkable functional stochastic approximation algorithms have been proposed in the last two decades \cite{calandriello2017efficient,dieuleveut2016nonparametric,marteau2019globally,tarres2014online,ying2008online}. The key ideas in that body of work are intimately related to those mentioned in Section~\ref{sec:par_to_nonpar}, however, they engage those ideas from a different direction: They assume that the hypothesis function space $\mathcal{F}$ is an RKHS, and then leverage the kernel in that space. In particular, the RKHS structure makes it possible to take the gradient of the evaluation functional $L_x(f):= f(x)$, with respect to the RKHS inner product $\langle \cdot , \cdot \rangle_{K}$, i.e.
\begin{equation}
    L_x(f+\epsilon g) = f(x) + \epsilon g(x) = L_x(f) + \epsilon\langle g,K_x \rangle_K.
\end{equation}
Thus, $K_x(\cdot):=K(x,\cdot)\in \mathcal{F}$ is the gradient of functional $L_x$ at $f$. However, one cannot do this in the general  $L^2_{\rho_X}$ space where the evaluation functional is no longer a bounded operator. 

Thus when $\mathcal{F}$ is an RKHS associated with kernel $K$, there is a simple nonparametric SGD updating rule for minimizing $E[(Y-f(X))^2]$ over $\mathcal{F}$:
\begin{equation}
\label{eq:RKHS_sgd}
\begin{aligned}
    \hat f_0 & = 0\\
   \hat f_{n} & = \hat f_{n-1} +\gamma_n\left(Y_n - \hat f_{n-1}(X_n)\right)K(X_n,\cdot) 
\end{aligned}
\end{equation}
Here, because the gradient is taken with respect to the RKHS inner product, we do not have the issue encountered in \eqref{eq:beforescaledown} where our series representation of the ``gradient'' actually did not converge. In fact, by working with the RKHS inner-product, we implicitly carry out the proposal of Section~\ref{sec:par_to_nonpar} and decrease the component-specific learning rate of higher order terms. More specifically, we usually have the Mercer expansion of the kernel function:
\begin{equation}
\label{eq:Kexpansion}
    K(x,z) = \sum_{j=1}^{\infty} t_j\psi_j(x)\psi_j(z),
\end{equation}
with respect to an orthonormal basis $\{\psi_j\}$ of $L^2_{\nu}$. For many common RKHS, we have $t_j = \Theta( j^{-u})$ for some $u>1$ \citep[Appendix~A]{fasshauer2015kernel}. Thus, \eqref{eq:RKHS_sgd} corresponds precisely to the previously discussed update \eqref{mainform}. Most popular RKHS have a kernel $K(x,z)$ with a closed form representation, and thus, rather than having to store an infinite number of coefficients, after $n$ steps the estimate from \eqref{eq:RKHS_sgd} would take the form of a weighted linear combination of $n$ kernel functions \cite{dieuleveut2016nonparametric}:
\begin{equation}
\label{kernelsgdnbasis}
    \hat f_n = \sum_{i=1}^{n}b_i K(X_i,\cdot).
\end{equation}
Although such estimators (with one more Polyak averaging step \eqref{eq:polyak}) have been shown to give rate-optimal MSE \cite{dieuleveut2016nonparametric}, updating them with a new observation $(X_{n+1},Y_{n+1})$ usually involves evaluating $n$ kernel functions at $X_{n+1}$, with computational expense of order $\Theta(n)$. This is in contrast with the constant update cost of $\Theta(d)$ in parametric SGD, where $d$ is the dimension of the parameter. Thus, the time expense of nonparametric kernel SGD will accumulate at order $\Theta(n^2)$. Also, one is required to store the $n$ feature-values $\{X_i\}_{i=1}^n$ to evaluate the estimator which results in $\Theta(n)$ space expense. This relatively large time and space complexity indicates that those kernel-based SGD estimators are not ideal as methods that are nominally designed to deal with large data sets.

There has been several works in the literature aiming at improving the computational aspect of kernel SGD methods \cite{si2018nonlinear,lu2016large,koppel2019parsimonious}. These methods select a subset of the $n$ kernel functions centered at the feature vectors and use them as basis functions to construct estimators (which is also related to Nyström projection). These works emphasize the application aspect of the proposed methods. Either the statistical performance or the computational expense is guaranteed to be optimal. Also, the theoretical analysis in these works typically require the noise variable to have extremely light tails.

There has also been recent work \cite{calandriello2017efficient,marteau2019globally} aimed at improving kernel SGD algorithms by leveraging approximate second order information (SGD only uses the first order information). The estimator in \cite{marteau2019globally} is shown to give rate-optimal MSE and have better (theoretical) computational efficiency than the vanilla kernel SGD mentioned above. However, these algorithms are usually dramatically more complicated and have a myriad of hyper parameters that need to be tuned.

There is another branch of research also called "online nonparametric regression" that engages with a different setting \cite{gaillard2015chaining,rakhlin2015online}. They do not aim to minimize the (population) generalization error directly. Their definition of regret is based on comparing a running average of prediction error and the empirical risk minimizer's training error. While this is an interesting area of research, and might be used to engage with population generalization error, it is less directly applicable as training error is only useful as a means to getting good generalization error.

\section{Online Learning and the Projection Estimator: Sieve-SGD}
\label{section:Sieve-SGD}
In this section, we combine ideas from the projection estimator (in the batch learning setting), and stochastic gradient descent to develop an estimator that is suitable for online nonparametric regression. The estimator we will propose achieves the minimax rate for MSE over a Sobolev ellipsoid, and is much more computationally efficient than standard kernel SGD methods.

As a reminder, the kernel SGD estimator based on \eqref{eq:RKHS_sgd} has minimax rate optimal MSE. When $\sum_{j=1}^{\infty} t_j\psi_i(s)\psi_j(t)$ has an available closed form, it requires $\Theta(n)$ memory and $\Theta(n^2)$ computation for sequentially processing $n$ observations. We aim to improve over this and furthermore to propose an effective estimator in cases where $\sum_{j=1}^{\infty} t_j\psi_i(s)\psi_j(t)$ has no closed form.

Motivated by the projection estimator, we opt to use truncated series in the updating rule, modifying \eqref{mainform} (or equivalently \eqref{eq:RKHS_sgd}) to get
\begin{equation}
\label{eq:truncatedmain}
    \hat f_{n} = \hat f_{n-1} +\gamma_n\left(Y_n - \hat f_{n-1}(X_n)\right)\sum_{j=1}^{J_n} t_j\psi_{j}(X_n)\psi_j
\end{equation}
Here $J_n$ is an increasing sequence of integers that grows  as we collect more observations. When $J_n$ is larger, the updating rule \eqref{eq:truncatedmain} is closer to our original form \eqref{mainform}; however, a smaller $J_n$ is desirable because it results in a lower computational expense. Part of our task is identifying a ``minimal'' $J_n$ that still maintains favorable statistical properties.

Unlike for classical projection estimators, when $t_j$ is properly selected, this truncation level, $J_n$ (so long as it is suitably large) does not impact the bias-variance tradeoff (to first order). By using a truncated, rather than infinite, series, we slightly reduce variance and add some minor higher order bias. For $J_n$ sufficiently large, the first order terms for bias and variance are determined by the sequence $\{t_j\}$ (not the truncation level). This is akin to using a truncated basis for penalized regression in the batch learning setting. For example, in \cite{hall2005theory} and \cite[Section~5.2]{wood2017generalized}, the authors propose to estimate $f_{\rho}$ by solving a penalized regression spline problem, where they use a reduced spline basis for improved computation (rather than including a knot at every point). The bias/variance trade-off there is controlled via the penalty: They are careful to include enough basis elements so that the use of a reduced basis only contributes a second order term to the bias. We will also show that, when $J_n$ is properly selected, so long as the component-specific learning rate $t_j$ is not too large (controlling the variance in the dynamic of SGD) or too small (controlling the bias term), Sieve-SGD can always achieve near optimal performance with very low computational expense. In this setting it is the truncation level, rather than $t_j$, balancing the trade-off between bias and variance, which is analogous to the batch projection estimator.

We will next give details of our proposal. For this proposal we are assuming that $f_{\rho}\in W(s,Q,\{\psi_j\})\subset L_{\nu}^2$, and that $s$ is known. Based on this, we choose our component-specific step-sizes as $t_j = j^{-2\omega}$ (for some $1/2<\omega \leq s$). We also define 
\begin{equation}
    K_{x,J_n}(\cdot) = K^{\omega}_{x,J_n}(\cdot) = \sum_{j=1}^{J_n}j^{-2\omega}\psi_j(x)\psi_j(\cdot).
\end{equation}
In addition to simplifying exposition, this notation relates our method to \eqref{eq:Kexpansion}. The function $K_{x,J_n}(\cdot)$ can be seen as a truncated approximation of the kernel function $K(x,\cdot) =K^{\omega}(x,\cdot) = \sum_{j=1}^{\infty}j^{-2\omega}\psi_j(x)\psi_j(\cdot)$ that drops all the $\psi_j$ with index $j>J_n$. 

\subsection{Sieve Stochastic Gradient Descent}
\label{section:algorithm}
We now explicitly give our Sieve Stochastic Gradient Descent algorithm (Sieve-SGD) for estimation of $f_{\rho}$ in a Sobolev ellipsoid $W(s,Q,\{\psi_j\})$.

Let $J_n = \lfloor n^{\alpha}\rfloor$ for some specified $\alpha>0$ and $\omega \in (\frac{1}{2}, s]$. The parameter $\alpha$ is usually taken between $\frac{1}{2s+1}$ and 1. We use $\gamma_i$ to denote the step size (learning rate) of the $i$-th update and typically choose $\gamma_i = \Theta(i^{-\frac{1}{2s+1}})$. 

\noindent\rule{14cm}{0.4pt}

\textbf{Proposed Algorithm: Sieve Stochastic Gradient Descent (Sieve-SGD)}

\noindent\rule{14cm}{0.4pt}

Set $\alpha, \omega >0$, step size $\{\gamma_i\}$ and basis functions $\{\psi_j\}$. Initialize $\bar f_0 = \hat f_0 = 0$.

For $i = 1,2,...$ :
\begin{enumerate}
\item Calculate $J_i = \lfloor i^{\alpha} \rfloor$, collect data pair $(X_i,Y_i)$.
\item Update $\hat f_i$:
\begin{equation}
\label{submainform}
\begin{aligned}
    \hat f_{i}
    & = \hat f_{i-1} + \gamma_i\left(Y_i - \hat f_{i-1}(X_i)\right)\sum_{j=1}^{J_i}j^{-2\omega}\psi_j(X_i)\psi_j\\
    &=\hat f_{i-1} + \gamma_i\left(Y_i - \hat f_{i-1}(X_i)\right)K_{X_i,J_i}\\
    \end{aligned}
\end{equation}
\item \label{alg:pol}Polyak averaging: Update $\bar f_i$ by
\begin{equation}
\begin{aligned}
\label{Sieve-SGD:estimator}
    \bar f_i &= \frac{1}{i+1}\sum_{k=0}^i \hat f_k\\ &\left(=\frac{i}{i+1}\bar{f}_{i-1} +\frac{1}{i+1}\hat f_i\right)
    \end{aligned}
\end{equation}
\end{enumerate}

\noindent\rule{14cm}{0.4pt}

We refer to the function $\bar f_i$ as the \textit{Sieve-SGD estimate} of $f_{\rho}$. We will later show that $\bar f_i$ has rate-optimal MSE for estimating any $f_{\rho}\in W(s)$. Here we use the language of ``updating a function", but in practice one would update the coefficient vector corresponding to the functions $\{\psi_j\}_{j=1}^{J_n}$. In Appendix~\ref{app:algorithm} we attach a presentation of the algorithm that works directly with the coefficients. This estimator is quite simple, though it does require apriori selection/knowledge of $\{\psi_j\}$ and $s$ (which can be done using a left-out validation set in practice). Unfortunately showing its favorable statistical properties (in Section~\ref{section:theory}) is somewhat more complex!
\subsection{Computational expense}
\label{section:computation}
After examining the updating rule above, one can see that $\hat f_i$ has the form:
\begin{equation}
    \hat f_i(x) = \sum_{j=1}^{J_i} b_j\psi_j(x)
\end{equation}
This requires storing the coefficients $\{b_j\}_{j=1}^{J_i}$ in memory. The main computational burden of each update step is calculating $\hat f_{i-1}(X_i)$ and $K_{X_i,J_i}$. Both require evaluating $J_i$ basis functions at $X_{i}$. Thus, the computational expense of the ``Update $\hat{f}_i$" step above is of order $J_i = \Theta(i^{\alpha})$, when we take evaluating one basis function at one point as $O(1)$. And the total expense of processing $n$ samples is of order $\Theta\left(n^{1+\alpha}\right)$. The space expense is of the same order $\Theta(i^{\alpha})$: We need only store coefficients of $J_i$ basis functions. In Section~\ref{subsection:optimalspace} we will show that, under mild conditions, this memory complexity is near optimal among all estimators with rate-optimal MSE.

This compares favorably with standard kernel SGD \eqref{kernelsgdnbasis} which uses $i$ basis functions at step $i$: Our estimator uses fewer when $\alpha<1$; as we will show later, $\alpha$ can be taken as small as $\frac{1}{2s+1}$ which is a substantial improvement. In practice, the parameter $\alpha$ can either be selected based on our assumptions about $s$ (belief on the smoothness of $f_{\rho}$) or heuristically tuned for empirical performance. 
\subsection{General Convex loss}
Although the main focus of this paper is regression with squared-error loss, our algorithm has a straightforward extension to general convex loss. Suppose we want to minimize the population loss
\begin{equation}
    E\left[\ell(Y,f(X))\right]
\end{equation}
over all functions $f\in W(s,Q,\{\psi_j\})$ and the loss function $\ell(Y,\cdot)$ is convex for each $Y$. In this case, we need only modify step $2$ of the Sieve-SGD estimator in Section~\ref{section:algorithm}. Given loss $\ell(\cdot,\cdot)$, the updating rule for $\hat f_{i}$ takes the general form:
\begin{enumerate}
    \item[2')] Update $\hat f_{i}$:
    \begin{equation}
    \label{eq:convexSieve-SGD}
        \hat f_{i} = \hat f_{i-1} +  \left.\gamma_i\frac{\partial}{\partial v} \ell\left(u,v\right)\right|_{\left(Y_i,\hat f_{i-1}(X_i)\right)}K_{X_i,J_i}
    \end{equation}
\end{enumerate}
For example, with $Y=\{1,-1\}$ considering  nonparametric logistic regression, the loss function one would use is $\ell(Y,f(X)) = \log(1+\exp(-Yf(X)))$. In this case, we have
\begin{equation}
    \left.\frac{\partial}{\partial v} \ell\left(u,v\right)\right|_{\left(Y_i,\hat f_{i-1}(X_i)\right)}
    =\left(1+\exp(Y_i\hat f_{i-1}(X_i))\right)^{-1}Y_i\in \mathbb{R}
\end{equation}
Theoretical guarantees for Sieve-SGD using general convex loss are beyond the scope of this paper. However, in Section~\ref{section:simulation} we provide simulated experiments that show the empirical performance of Sieve-SGD for nonparametric logistic regression. These empirical results intimate that perhaps similar theoretical guarantees to those shown for squared-error-loss hold in a more general setting. 

\subsection{Choice of Basis Functions \& Multivariate Problems}
\label{subsection:multivaraite}
In practice, there are many available choices of univariate $\psi_j$ that in general lead to interesting (Sobolev-type) hypothesis spaces. For example,
\begin{equation}
\label{eq:cosbasis}
    \psi_1(x) = 1,\quad \psi_j = \sqrt{2}\cos((j-1)\pi x), \text{ for }j\geq 2.
\end{equation}
This set of basis functions are the ``eigenfunctions" of Sobolev spaces over $[0,1]$ (Appendix~A.2 in \cite{novak2008tractability}), which means they are orthogonal w.r.t to the Lebesgue inner product and the Sobolev inner product simultaneously. The corresponding Sobolev ellipsoid does not impose periodicity assumptions of $f_{\rho}$ and is very convenient to use in practice. Among many other choices, we can also use algebraic polynomials, or a combination of algebraic polynomial and (periodic) Fourier basis \cite{Eubank1990CurveFB}.

In most applications, the covariate $X_i$'s take value in $\mathbb{R}^p$ where $p>1$. In some situations, there are some ``canonical" choices of basis function $\psi(x):\mathbb{R}^p\rightarrow \mathbb{R}$ that people might use for identifying their (multivariate) Sobolev ellipsoid. For example, when considering estimating a function on a sphere $\mathbb{S}^2$, $\psi_j$ could be taken as the orthonormal spherical harmonics (\cite{kennedy2013classification}, \cite{michel2012lectures}).

In many situations, the basis function $\psi_{j}$ can conveniently be taken as a tensor product of a one-dimensional complete basis, and Sieve-SGD can be directly applied in this multivariate setting. If we are using a univariate Sobolev ellipsoid to represent a ball in an RKHS, then the ellipsoid defined by the tensor product basis will correspond to a ball in the RKHS spanned by the tensor product kernel (though care needs to be taken with the ordering of the basis vectors). Some technical details and numerical examples on this can be found in Appendix~\ref{app:multivariate} and the reference therein. In all of these cases, our theoretical results will hold (so long as the function $f_{\rho}$ belongs to the specified space).

A common alternative approach in multivariate problems is to impose some additional structure on the hypothesis space to make estimation more tractable. This is particularly true when the feature dimension $p$ is large. One popular model is the nonparametric additive model \citep{stone1985additive,hastie2009elements,yuan2016minimax}, which is thought to effectively balance model flexibility and interpretability. For $x\in\mathbb{R}^p$, we might consider assuming/imposing an additive structure on the regression function:
\begin{equation}
\label{eq:additive}
    f_{\rho}(x) = \sum_{k=1}^p f_{\rho,k}\left(x^{(k)}\right)
\end{equation}
where each of the component functions $f_{\rho,k}$ belong to a Sobolev ellipsoid $W_k(s_k,Q_k,\{\psi_{jk}\})$. For ease of exposition, in \eqref{eq:additive}, we assume $E[Y] = 0$ to avoid the need for a common intercept term. For a more complete version with common intercept, see Appendix~\ref{app:multivariate}. For a fixed dimension $p$, when all $W_k = W^*$ (for some Sobolev ellipsoid $W^*$), the minimax rate for estimating such an additive model is identical (up to a multiplicative constant $p$) to the minimax rate in the analogous one-dimension nonparametric regression problem with the same hypothesis space $W^*$ \citep{raskutti2009lower,stone1985additive}. For the additive model \eqref{eq:additive}, the updating rule \eqref{submainform} of Sieve-SGD could be replaced by:
\begin{equation}
    \hat f_{i}
    = \hat f_{i-1} + \gamma_i\left(Y_i - \sum_{k=1}^p\hat f_{i-1,k}\left(X_i^{(k)}\right)\right)\sum_{k=1}^p\sum_{j=1}^{J_{ik}}j^{-2\omega_k}\psi_{jk}\left(X_i^{(k)}\right)\psi_{jk}
\end{equation}
here $J_{ik}$ is the truncation level of $k$-th dimension when the sample size $=i$ and $\hat f_{i-1,k}$ is the estimate of $f_{\rho,k}$. Most of the theory that we develop in Section~\ref{section:theory} could apply here. 

\section{Generalization Guarantees of Sieve-SGD}
\label{section:theory}
In this section, we show Sieve-SGD achieves the minimax rate for nonparametric estimation in Sobolev ellipsoids under mild assumptions. 
We also show that Sieve-SGD has near minimal memory complexity among all estimators that are minimax rate-optimal for estimation in a Sobolev ellipsoid. The conditions on the hyperparameters can be used as theoretical guidance when applying Sieve-SGD to real data problems.
\subsection{Model Assumptions} 
\label{section:modelassumptions}
We begin by listing the conditions we will require in our proof. They reflect different aspects of the problem: independent observations (A1), distribution of $X$ (A2), the hypothesis space assumed for $f_{\rho}$ (A3) and tail behaviour of the noise (A4). These conditions ensure the MSE rate-optimality of Sieve-SGD.
\begin{enumerate}
    \item[A1] (i.i.d. data) The data points $(X_n,Y_n)_{n\in\mathbb{N}}\in \mathcal{X} \times \mathbb{R}$ are independently, identically sampled from a distribution $\rho(X,Y)$.
    \item[A2] (feature distribution) Let $\nu$ be a user-specified measure that is strictly positive on $\mathcal{X}$. Assume the distribution of feature $X$, $\rho_X$, is absolutely continuous w.r.t. $\nu$. Let $p_X = d\rho_X/d\nu$ denote its Radon–Nikodym derivative. We assume for some $u,\ell$ such that $0<\ell<u<\infty$:
    \begin{equation*}
        \ell \leq p_X(x) \leq u \quad\text{for all }x\in\mathcal{X}
    \end{equation*}
    
    \item[A3] (Sobolev ellipsoid) Let $\{\psi_j\}_{j=1}^{\infty}$ be a set of uniformly bounded ($\|\psi_j\|_{\infty} \leq M$), continuous, orthonormal basis of $L^2_{\nu}$. We assume the regression function $f_{\rho}$ falls in a Sobolev ellipsoid, with basis functions given by $\{\psi_j\}$, i.e. for some $s>1, Q<\infty$, \begin{equation}
        f_{\rho}\in W(s,Q,\{\psi_j\})
    \end{equation}
    
    \item[A4] (noise) One of the following two assumptions is satisfied by the noise variable $\epsilon = Y - f_{\rho}(X)$:
    \begin{itemize}
        \item $\epsilon$ is bounded by some $C_{\epsilon}$ almost surely.
        \item $\epsilon$ is independent of the features, $X$, and has a finite second moment $E[\epsilon^2] = C_{\epsilon}^2$.
    \end{itemize}
    
\end{enumerate}

\textbf{Note:} In assumption A3, we do not require  $\psi_j$ to be orthonormal w.r.t. $\rho_X$ (and it is in general not true), but only require them to be orthonormal w.r.t. the known measure $\nu$. In many cases $\nu$ might be taken to be Lesbesgue (or uniform) measure over a domain containing $\mathcal{X}$, as this is the canonical measure under which function spaces such as Sobolev spaces and Besov spaces are defined. As long as the density function $p_X$ satisfies A2, using the non-orthonormal (w.r.t. $\rho_X$) basis functions, $\psi_j$, does not prevent Sieve-SGD from having rate-optimal MSE. Also, the lower bound requirement of $p_X$ in A2 may be due to artifacts in our proof. In reality, especially when the dimension of feature $X$ is higher, such an requirement is hard to be satisfied. According to our simulation results, Sieve-SGD still achieves the minimax rate even when $\rho_X$ has a strictly smaller support than $\nu$. As compared with other work in nonparametric online learning \cite{dieuleveut2016nonparametric,tarres2014online,ying2008online}, our assumptions are more direct. We discuss this in detail later in this section. 

\subsection{Rate optimality when $t_j = j^{-2s}$}
\label{section:fixedtj}
In this section, we present the rate-optimality results of Sieve-SGD when we choosing the component-specific learning rate to be $t_j = j^{-2s}$ (or $\omega = s$ in \eqref{submainform}). In this setting, our theoretical analysis treats Sieve-SGD as a truncated-version (in the basis expansion domain) of a ``correct" kernel SGD procedure (we will discuss the ``incorrect" version very soon in Section~\ref{section:fixedJn}). Here is the main result in this setting:
\begin{theorem}
\label{maintheorem}
Assume A1-A4. Set the component-specific learning rate as $t_j = j^{-2s}$. Also set the overall learning rate to be $\gamma_n = \gamma_0 n^{-\frac{1}{2s+1}}$ with $\gamma_0 \leq (2M^2\zeta(2s))^{-1}$, where $\zeta(k)~=~\sum_{i=1}^{\infty} i^{-k}$.  Choose the number of basis functions to be $J_n \geq  n^{\alpha}\log^2 n \vee 1$ for an arbitrary $\alpha \geq \frac{1}{2s+1}$.

Then the MSE of Sieve-SGD~\eqref{Sieve-SGD:estimator} converges at the following rate
\begin{equation}
  E\|\bar f_{n} - f_{\rho}\|^2_{L^2_{\rho_X}} = O\left(n^{-\frac{2s}{2s+1}}\right).  
\end{equation} 
This implies that Sieve-SGD is a minimax rate-optimal estimator of $f_{\rho}$ over $W(s,Q,\{\psi_j\})$.
\end{theorem}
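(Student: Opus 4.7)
The natural strategy is to view Sieve-SGD as a truncated kernel SGD in the RKHS attached to the Mercer kernel $K(x,z) = \sum_{j=1}^{\infty} j^{-2s}\psi_j(x)\psi_j(z)$, whose unit ball is (up to the constant $Q$) exactly the Sobolev ellipsoid $W(s,Q,\{\psi_j\})$. Since the update \eqref{submainform} is affine in $Y_i$ for fixed $(X_i)$, decompose $\hat f_i = \hat f_i^B + \hat f_i^V$, where $\hat f_i^B$ runs \eqref{submainform} on the noiseless labels $f_\rho(X_i)$ and $\hat f_i^V$ on the centered residuals $\epsilon_i = Y_i - f_\rho(X_i)$ with $\hat f_0^V = 0$. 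The Polyak averages then satisfy $\bar f_n - f_\rho = (\bar f_n^B - f_\rho) + \bar f_n^V$, so it suffices to bound $E\|\bar f_n^B - f_\rho\|_{L^2_{\rho_X}}^2$ and $E\|\bar f_n^V\|_{L^2_{\rho_X}}^2$ each by $O(n^{-2s/(2s+1)})$.

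For the bias, split $f_\rho = f_\rho^{\leq J_n} + f_\rho^{>J_n}$. The ellipsoid constraint $\sum_j j^{2s}(\theta_j^\ast)^2 \leq Q^2$ gives $\|f_\rho^{>J_n}\|_{L^2_\nu}^2 \leq Q^2 J_n^{-2s}$, which A2 upgrades (by the factor $u$) to the same $L^2_{\rho_X}$ bound; with $J_n \geq n^{1/(2s+1)}\log^2 n$ this is $o(n^{-2s/(2s+1)})$. The residual $\bar f_n^B - f_\rho^{\leq J_n}$ is driven by the noiseless coefficient recursion $e_i = (I - \gamma_i D_i^{1/2}\hat\Sigma_i D_i^{1/2})e_{i-1} + \gamma_i r_i$, with $D_i = \mathrm{diag}(j^{-2s})_{j\leq J_i}$, $\hat\Sigma_i$ the rank-one operator $(\psi_j(X_i)\psi_k(X_i))_{j,k\leq J_i}$, and $r_i$ a cross-term encoding the interaction between retained and truncated modes. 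The stability condition $\gamma_0 \leq (2M^2\zeta(2s))^{-1}$ forces $\|\gamma_i D_i^{1/2}\hat\Sigma_i D_i^{1/2}\|_{\mathrm{op}}\leq 1$ almost surely, whereupon a Polyak--Juditsky style contraction analysis, adapted to a decreasing step-size $\gamma_i = \gamma_0 i^{-1/(2s+1)}$, yields the rate-optimal bias.

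For the variance, $\hat f_i^V$ is a martingale whose increments are $\gamma_i \epsilon_i K_{X_i,J_i}$ propagated by random contractions $I - \gamma_i \hat\Sigma_i^{\mathrm{tr}}$. Expanding $\|\bar f_n^V\|_{L^2_{\rho_X}}^2$ and using noise orthogonality collapses the bound to a trace computation: the dominant term scales as $n^{-2}\sum_{i=1}^n \gamma_i \cdot \mathrm{tr}(D_i)$, with $\mathrm{tr}(D_i) \leq \zeta(2s) < \infty$ uniformly in $i$ (by A3 and $2s > 1$), and $\gamma_i = \Theta(i^{-1/(2s+1)})$; the sum then evaluates to $O(n^{-2s/(2s+1)})$. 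Assumption A4 enters here to bound the per-step conditional variance by $C_\epsilon^2$, and the $n^{-2}$ prefactor comes from Polyak averaging, which kills martingale cross-terms up to lower order.

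The main obstacle is the bias step: the covariance operator $\Sigma = (E_{\rho_X}[\psi_j\psi_k])_{j,k}$ is neither diagonal in the $\psi_j$ basis (because $\rho_X\neq\nu$) nor constant across iterations (because $J_i$ grows with $i$), so the constant-operator Dieuleveut--Bach analysis does not apply verbatim. Non-diagonality is handled by sandwiching $L^2_{\rho_X}$ and $L^2_\nu$ norms via the density bounds $\ell, u$ of A2, transferring diagonal estimates on the ideal operator $D$ with only a multiplicative loss. The moving truncation is handled by the $\log^2 n$ slack in the growth condition on $J_n$: it makes the aggregate contribution of the cross-terms $r_i$ arising from the enlarging truncation window negligible compared with $n^{-2s/(2s+1)}$. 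Once both error sources are controlled, the stated upper bound follows, and minimax rate-optimality is then immediate from the classical nonparametric lower bound over $W(s,Q)$ (e.g.\ \cite{introtononpara}), which holds under A1--A2.
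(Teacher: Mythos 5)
Your overall architecture matches the paper's: the split into a noiseless ``initial condition'' process and a pure-noise process (the paper's $\eta_n$ and $\vartheta_n$), the RKHS framing with $R^2=M^2\zeta(2s)$ making $\gamma_0R^2<1$ the stability condition, and the role of the $\log^2 n$ slack in $J_n$ in absorbing the truncation cross-terms are all essentially the paper's argument (the paper handles truncation inside a telescoped recursion for $E\|\eta_n\|_K^2$ rather than by splitting $f_\rho$ into retained and discarded modes, but that is a cosmetic difference).

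The variance step, however, contains a concrete error. The claimed dominant term $n^{-2}\sum_{i=1}^n\gamma_i\operatorname{tr}(D_i)$ is of order $n^{-2}\cdot n^{2s/(2s+1)}=n^{-(2s+2)/(2s+1)}$, which is polynomially \emph{smaller} than the actual variance contribution of averaged SGD at this learning rate, namely $\Theta\bigl(C_\epsilon^2\,n^{-2s/(2s+1)}\bigr)$ --- the classical ``noise level times effective dimension over $n$'' with effective dimension $(n\gamma_n)^{1/(2s)}=n^{1/(2s+1)}$. What the trace heuristic misses is that each increment $\gamma_k\epsilon_kK_{X_k,J_k}$ is propagated forward through $\sum_{j=k}^{n}\prod_{i=k+1}^{j}(I-\gamma_iT_{X,J_i})$ before averaging; along the eigendirection with eigenvalue $\lambda_t\asymp t^{-2s}$ this sum has size roughly $(n-k)\wedge(\gamma_k\lambda_t)^{-1}$, and it is the \emph{square} of this amplification factor, summed over $t$ and $k$, that produces the $n^{2-2s/(2s+1)}$ scaling of $n^2E\|\bar\vartheta_n\|_2^2$ in the paper's Lemma~\ref{lemma:Jnvariance} (via Lemmas~\ref{gap1inlemma7} and~\ref{gap2inlemma7}). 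Relatedly, ``noise orthogonality'' does not by itself collapse the expansion: the propagation operators $I-\gamma_iT_{X_i,J_i}$ are random and correlated with the iterates they act on, so the expectation does not factor. The paper resolves this multiplicative noise by expanding $\vartheta_n=\sum_{k=0}^{r}\eta_n^k$ into semi-stochastic recursions driven by the deterministic operators $T_{X,J_i}$, with remainders $\Xi_n^k=(T_{X,J_n}-T_{X_n,J_n})\eta_{n-1}^{k-1}$ whose second moments shrink geometrically in $k$ precisely because $\gamma_0R^2<1$. Without a substitute for these two ingredients --- the amplification accounting and the treatment of multiplicative noise --- the variance bound does not go through, even though the final rate you assert is correct.
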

We now discuss our assumptions and results in more detail, and relate them to what is currently in the literature.\\

\textbf{Note 1}: In the analysis of many reproducing kernel methods for nonparametric estimation \cite{dieuleveut2016nonparametric,tarres2014online, yuan2010reproducing}, the spectrum of the \emph{covariance operator} plays an important role in controlling the statistical behavior of estimators. It is conventional in the community to make assumptions associated with this spectrum, which we find less natural than our related assumptions A2 and A3. The covariance operator is an analog of the covariance matrix in infinite dimensional spaces. For our problem setting, one of the natural covariance operator $T_X$ is defined as: 
\begin{equation} 
\label{eq:TX}
\begin{aligned}
T_X : L_{\rho_{X}}^{2} & \rightarrow L_{\rho_{X}}^{2} \\ 
g & \mapsto \int_{\mathcal{X}} g(\tau) \left(\sum_{j=1}^{\infty} j^{-2s} \psi_j(\tau)\psi_j\right) d\rho_X(\tau). 
\end{aligned}
\end{equation}
A direct analysis of the spectrum of $T_X$ is hard. However, there is a simpler operator that we have in hand which we can relate $T_X$ to:
\begin{equation} 
\label{eq:Tnu}
\begin{aligned}
T_{\nu} : L^{2}_{\nu} & \rightarrow L^{2}_{\nu} \\
g & \mapsto \int_{\mathcal{X}} g(\tau) \left(\sum_{j=1}^{\infty} j^{-2s} \psi_j(\tau)\psi_j \right)d\nu(\tau).
\end{aligned}
\end{equation}
We know the eigensystem of $T_{\nu}$: It is exactly $(j^{-2s},\psi_j)$ (eigenvalue, eigenfunction). It is direct to check because $\{\psi_j\}$'s are orthonormal w.r.t. $\nu$, so $\int \psi_j(\tau) \sum_{j=1}^{\infty} j^{-2s}\psi_j(\tau)\psi_j d\nu(\tau) = j^{-2s}\psi_j$. As an additional contribution, our work shows that with the simple assumption A2 \& A3, we can get knowledge about $T_X$'s eigenvalues from those of $T_{\nu}$.
\begin{lemma} 
\label{maintextlemma}
Given assumptions A2, A3, the $j$-th eigenvalue, $\lambda_j$, (sorted in a decreasing order) of $T_X$ satisfies $\lambda_j = \Theta(j^{-2s})$. 
\end{lemma}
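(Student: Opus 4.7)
The idea is to exploit the fact that the eigensystem of $T_{\nu}$ is known exactly --- its eigenvalues are precisely $\{j^{-2s}\}_{j \geq 1}$ because each $\psi_j$ is an eigenfunction --- and to transfer this information to $T_X$ via a unitary change of variables that absorbs the Radon--Nikodym density $p_X$. I would begin by defining the multiplication map $U : L^2_{\rho_X} \to L^2_\nu$ by $(Ug)(x) = g(x)\sqrt{p_X(x)}$. A direct calculation gives $\|Ug\|_{L^2_\nu}^2 = \int g^2 p_X\, d\nu = \|g\|_{L^2_{\rho_X}}^2$, so $U$ is an isometry; surjectivity is immediate from $p_X \geq \ell > 0$, so $U$ is a Hilbert-space isomorphism. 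The conjugated operator $\tilde T_X := U T_X U^{-1}$ then acts on $L^2_\nu$ with the same spectrum as $T_X$, and a short computation identifies $\tilde T_X$ as the integral operator on $L^2_\nu$ with symmetric kernel $\tilde K(x,y) = \sqrt{p_X(x)}\, K(x,y)\, \sqrt{p_X(y)}$, where $K(x,y) := \sum_j j^{-2s}\psi_j(x)\psi_j(y)$ is the kernel of $T_\nu$.

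With both operators now living on the common Hilbert space $L^2_\nu$, I would compare their eigenvalues via the Courant--Fischer min-max characterization. For any $h \in L^2_\nu$, setting $\phi := h\sqrt{p_X}$ gives
\begin{equation*}
\langle \tilde T_X h, h\rangle_{L^2_\nu} = \langle T_\nu \phi, \phi\rangle_{L^2_\nu},
\end{equation*}
while $\|\phi\|^2_{L^2_\nu} = \int h^2 p_X\, d\nu$ satisfies $\ell\, \|h\|^2_{L^2_\nu} \leq \|\phi\|^2_{L^2_\nu} \leq u\, \|h\|^2_{L^2_\nu}$ by assumption A2. Since $h \mapsto \phi$ is a linear bijection of $L^2_\nu$ (again invoking $\ell \leq p_X \leq u$), it sends $j$-dimensional subspaces to $j$-dimensional subspaces. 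Applying min-max to both $\tilde T_X$ and $T_\nu$ and combining with the two-sided ratio bound between $\|h\|^2$ and $\|\phi\|^2$ yields
\begin{equation*}
\ell\, \lambda_j(T_\nu) \;\leq\; \lambda_j(T_X) = \lambda_j(\tilde T_X) \;\leq\; u\, \lambda_j(T_\nu),
\end{equation*}
and since $\lambda_j(T_\nu) = j^{-2s}$, this is exactly $\lambda_j(T_X) = \Theta(j^{-2s})$.

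The main subtlety I anticipate is careful bookkeeping of the min-max under the change of variables: the Rayleigh quotient for $\tilde T_X$ is naturally normalized by $\|h\|^2_{L^2_\nu}$, whereas the one for $T_\nu$ appearing after the substitution is naturally normalized by $\|\phi\|^2_{L^2_\nu}$, so the upper and lower density bounds must be applied separately and in opposite directions to obtain the matching two-sided estimate. Aside from this, the argument is essentially mechanical. The boundedness of $\psi_j$ in A3 is needed only to ensure $K(x,y)$ is well-defined pointwise, and compactness plus trace-class of the operators follows from $\sum_j j^{-2s} < \infty$ (which holds since $s > 1$).
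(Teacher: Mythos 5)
Your argument is correct, but it is a genuinely different proof from the one in the paper. The paper proceeds through metric entropy: it bounds the entropy numbers $e_j(T_\nu^{1/2})$ using the known eigenvalues $j^{-2s}$, transfers this to $e_j(T_X^{1/2}) \leq u\, e_j(T_\nu^{1/2})$ by observing that the isometries $S_X^{1/2}$ and $S_\nu^{1/2}$ both map their respective unit balls onto the unit ball of the RKHS $\mathcal{H}_K$ (so the two images differ only by which $L^2$ norm is used to measure covers, controlled by A2), and then recovers eigenvalue bounds from entropy bounds via Carl's inequality. You instead conjugate $T_X$ by the unitary multiplication operator $h \mapsto h\sqrt{p_X}$ to obtain an integral operator on $L^2_\nu$ with kernel $\sqrt{p_X(x)}\,K(x,y)\sqrt{p_X(y)}$, and compare it to $T_\nu$ directly via Courant--Fischer; your bookkeeping of the Rayleigh quotients and of the subspace bijection is right. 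Your route is more elementary (no RKHS isometry lemmas, no entropy numbers, no Carl's inequality) and yields the sharper two-sided bound $\ell\, j^{-2s} \leq \lambda_j \leq u\, j^{-2s}$, versus the paper's constants of the form $288\, u^2 s^{2s}$; it also extends with no change to the truncated operators $T_{X,J}^{\omega}$ treated in the paper's Lemma~\ref{lemma:samejdifferentkernels}, since $T_{\nu,J}^{\omega}$ is still explicitly diagonalized by $\{\psi_j\}$. What the paper's longer route buys is the methodological point the authors explicitly advertise: exhibiting the link between spectral assumptions and metric-entropy arguments, which is of independent interest but is not needed to establish the lemma itself. One small point worth making explicit if you write this up: compactness of $\tilde T_X$ (needed for Courant--Fischer in infinite dimensions) follows since it is unitarily equivalent to the trace-class operator $T_X$, and the identification $\lambda_j(T_\nu) = j^{-2s}$ uses the completeness of $\{\psi_j\}$ in $L^2_\nu$ from A3, not just orthonormality.
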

Moreover, the upper bound of the density in A2 ensures the upper bound in Lemma~\ref{maintextlemma} ($\lambda_j = O(j^{-2s})$), and the lower bound of the density ensures the other half of the result. The proof of the above Lemma uses the underlying connection between the eigenvalues of an operator and its metric entropy. For rigorous definitions and proof of Lemma~\ref{maintextlemma}, see Appendix~\ref{app:RKHS}.

Although the exact result of Lemma~\ref{maintextlemma} is not used in the proof of Theorem~\ref{maintheorem} (or Theorem~\ref{maintheorem_fixedJn}). We still present it here since it may be of interest itself and the stated results is less technical and easier to comprehend. The proof of more technical version (Lemma~\ref{lemma:samejdifferentkernels}) follows very closely to that of Lemma~\ref{maintextlemma}. In such a more general version, we investigate the spectrum of covariance operators of form:$T_{X,J_n}^{\omega}(f) = \int f(\tau)\left(\sum_{j=1}^{J_n}j^{-2\omega}\psi_j(x)\psi_j\right)d\rho_X(\tau).$

To prove Theorem \ref{maintheorem}, we need to engage with a series of RKHSs with kernels given by
\begin{equation}
    \begin{aligned}
        K_{J_n}:\mathcal{X}\times \mathcal{X} &\rightarrow \mathbb{R} \\
        (s,t)&\mapsto  \sum_{j=1}^{J_n}j^{-2s}\psi_j(s)\psi_j(t) := K_{J_n}(s,t).
    \end{aligned}
\end{equation}
While we discuss our work in the context of Sobolev ellipsoids, there is an equivalent formulation directly in RKHS. See Appendix~\ref{app:RKHS} for more discussion. Although an explicit form for $K_{J_n}$ is not in general necessary or accessible for Sieve-SGD, the existence (i.e. the absolute convergence of the infinite sum) of $K_{J_n}$ is a direct consequence of A3. This is enough for theoretical analysis. For kernel SGD methods, a fixed kernel (with $J_n = \infty$) is used and there is only one relevant RKHS. This means, on average, kernel SGD is applying the same procedure each iteration; but for Sieve-SGD, we need to engage with a series of increasing RKHSs (on average, Sieve-SGD may not be doing the same thing between iterations). As a side contribution, we present how to handle such a more technically involved case.

\textbf{Note 2}: In contrast to our assumption A3, the hypothesis spaces in \cite{dieuleveut2016nonparametric,tarres2014online,ying2008online,marteau2019globally} are described in terms of ``$T_X$"  and its eigen-decomposition. This unfortunately obfuscates difficulties related to verifying those conditions: In particular because $\rho_X$ is involved in the definition of $T_X$ \eqref{eq:TX}, we need knowledge of (generally unknown) $\rho_X$ to characterize $T_X$, and understand its eigenvalues and eigenfunctions.

More specifically, in the literature we mentioned above, it is often assumed that for some $r\in [1/2,1]$ (Definition \ref{def:SX}):
\begin{equation}
    \|T_X^{-r}(f_{\rho})\|_{L^2_{\rho_X}}^2 < \infty
\end{equation}
This can be related to a Sobolev ellipsoid-type condition
\begin{equation}
\label{eq:theirellipsoid}
    \|T_X^{-r}(f_{\rho})\|_{L^2_{\rho_X}}^2
    =\sum_{j=1}^{\infty} \lambda_{j}^{-2r} \theta_{j}^2 < \infty\quad\text{ where }f_{\rho} = \sum_{j=1}^{\infty}\theta_j\phi_j
\end{equation}
where $(\lambda_j,\phi_j)_{j=1}^{\infty}$ are the eigenvalue and eigenfunctions of operator $T_X$, and $\phi_j$'s are orthonormal w.r.t. $L^2_{\rho_X}$. Unfortunately, we cannot directly engage with $(\lambda_j,\phi_j)_{j=1}^{\infty}$, since calculating them requires knowledge of $\rho_X$. Thus, assumptions formulated in the language of $T_X^{-r}$ are difficult to directly understand. In contrast, our assumptions translate to analyzing the spectrum of $T_{\nu}$, which has no dependence on $\rho_X$, and its spectrum can been directly calculated (as noted above).

\textbf{Note 3:} For parametric SGD methods, usually a bound on the second moment of the gradient vector is required to guarantee rate-optimal performance (both theoretically and in practice). Formally, for optimization problem~\eqref{OP}, it is usually required that $E[\|\nabla \ell(\beta)\|^2] \leq~R^2<~\infty$ for all $\beta\in\mathbb{R}^d$ \cite{borkar2009stochastic,duchi2014multiple}.

For nonparametric stochastic approximation, there is a similar regularity requirement for the ``gradient". The assumptions A2-A3 are enough to ensure this for Sieve-SGD. In our proof, we show that there exists a number $R < \infty$ such that for all $x\in \mathcal{X}$ and any $J_n$, we have $\|K_{x,J_n}\|_K^2\leq R^2$. This result is listed in Lemma~\ref{boundEK} where $R^2 = M^2\zeta(2s)$ and $\zeta(k) = \sum_{i=1}^{\infty} i^{-k}$. In Theorem \ref{maintheorem}, we required $\gamma_0$ to be smaller than $(2M^2\zeta(2s))^{-1}$ to ensure our theoretical guarantees.

\textbf{Note 4:} For completeness, here we state the minimax-rate of our nonparametric regression problem over a Sobolev ellipsoid:
\begin{equation}
    \liminf_{n\rightarrow\infty} \inf_{\hat f} \sup_{f_{\rho}\in W(s,Q,\{\psi_j\})} E\left[n^{\frac{2s}{2s+1}}\|\hat f-f_{\rho}\|_{L^2_{\rho_X}}^2\right] \geq C
\end{equation}
where the infimum ranges over all possible functions $\hat f$ that are sufficiently measurable. For a derivation of this lower bound, see  \cite[Chapter~15]{wainwright2019high}.

\subsection{Adaptivity to $t_j$ for Properly Chosen $J_n$}
\label{section:fixedJn}
In section~\ref{section:fixedtj} we presented the optimality guarantee of Sieve-SGD: when the component-specific learning rate is properly chosen, i.e. $t_j = j^{-2s}$, Sieve-SGD is statistically optimal so long as the number of basis functions does not increase too slow, that is, $J_n \geq n^{\frac{1}{2s+1}}\log^2(n)$. Specifically, when $J_n = \infty$, the Sieve-SGD updating rule reduces to the kernel SGD updating rule \eqref{eq:RKHS_sgd} with kernel $K(X_n,\cdot) = \sum_{j=1}^{\infty}j^{-2s}\psi_j(X_n)\psi_j(\cdot)$. So long as we have access to the closed-form of $K(X_n,\cdot)$, the corresponding kernel SGD is also optimal under the same conditions. In such a scenario, Sieve-SGD can be seen as a truncated-version of a ``correct" kernel SGD method with much better computational properties.

However, if we choose $t_{j} = j^{-2\omega}, \omega \neq s$, the corresponding kernel SGD, using kernel $K(X_n,\cdot) = \sum_{j=1}^{\infty} j^{-2\omega}\psi_j(X_n)\psi_j(\cdot)$, is no longer optimal without modifying the learning rate $\gamma_n$ accordingly \cite{dieuleveut2016nonparametric}. But for Sieve-SGD, so long as the truncation level $J_n$ is properly selected, the statistical performance of Sieve-SGD is near optimal for a quite wide range of choice of $\omega$.

\begin{theorem}
\label{maintheorem_fixedJn}
Assume A1-A4. Set the component-specific learning rate to be $t_j = j^{-2\omega}$ with $\frac{1}{2} < \omega < s$. Choose the learning rate to be $\gamma_n = \gamma_0 n^{-\frac{1}{2s+1}}$, with $\gamma_0 \leq M^2\zeta(2\omega)/2$. Choose the number of basis functions to be $J_n~=~n^{\frac{1}{2s+1}}\log^2 n \vee 1$. 

Then the MSE of Sieve-SGD~\eqref{Sieve-SGD:estimator} converges at the following near optimal rate
\begin{equation}
  E\|\bar f_{n} - f_{\rho}\|^2_{L^2_{\rho_X}} = O\left(n^{-\frac{2s}{2s+1}}\log^2 n\right)  
\end{equation} 
\end{theorem}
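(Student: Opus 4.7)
The plan is to adapt the framework developed for Theorem~\ref{maintheorem}, exploiting the fact that the truncation at $J_n = n^{1/(2s+1)}\log^2 n$ effectively decouples Sieve-SGD from the high-frequency components where the mismatched preconditioning $t_j = j^{-2\omega}$ with $\omega<s$ would otherwise cause trouble. Within the finite span of $\psi_1,\dots,\psi_{J_n}$, the specific choice of $\omega$ only affects the conditioning of an otherwise finite-dimensional averaged-SGD problem; outside that span, the algorithm does nothing, so the value of $\omega$ is irrelevant there.

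First I would split the error using the Sobolev projection $P_{J_n} f_\rho = \sum_{j=1}^{J_n}\theta_j\psi_j$:
\begin{equation*}
 E\|\bar f_n - f_\rho\|^2_{L^2_{\rho_X}} \leq 2\, E\|\bar f_n - P_{J_n} f_\rho\|^2_{L^2_{\rho_X}} + 2\,\|P_{J_n} f_\rho - f_\rho\|^2_{L^2_{\rho_X}}.
\end{equation*}
Using A2 to pass from $L^2_\nu$ to $L^2_{\rho_X}$, and A3, the second term is bounded by $uQ^2 J_n^{-2s} = O(n^{-2s/(2s+1)}\log^{-4s} n)$, strictly smaller than the target rate, so the truncation bias is inert.

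Second I would analyze the subspace error $E\|\bar f_n - P_{J_n} f_\rho\|^2_{L^2_{\rho_X}}$ by viewing Sieve-SGD as Polyak--Ruppert averaged SGD in the finite-rank RKHS with reproducing kernel $K_{J_n}^\omega(x,y)=\sum_{j=1}^{J_n} j^{-2\omega}\psi_j(x)\psi_j(y)$, with $f_\rho - P_{J_n} f_\rho$ entering the stochastic gradient as additional zero-mean noise with bounded second moment. The variance contribution of averaged SGD scales with the effective dimension: from the more general form of Lemma~\ref{maintextlemma} (i.e.\ Lemma~\ref{lemma:samejdifferentkernels}), $T_{X,J_n}^\omega$ has exactly $J_n$ nonzero eigenvalues of order $j^{-2\omega}$, and the standard variance bound yields a contribution of order $J_n/n = n^{-2s/(2s+1)}\log^2 n$, which is precisely the source of the $\log^2 n$ factor in the advertised rate. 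For the bias from initialization ($\hat f_0 = 0$), the key estimate is $\|P_{J_n} f_\rho\|_{K_{J_n}^\omega}^2 = \sum_{j=1}^{J_n}\theta_j^2 j^{2\omega} = \sum_{j=1}^{J_n}(\theta_j j^s)^2 \cdot j^{2\omega-2s} \leq Q^2$, which holds precisely because $\omega < s$ makes $j^{2\omega-2s}\leq 1$; the averaged-SGD bias analysis then pushes this contribution strictly below the target rate.

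Several auxiliary ingredients transfer essentially verbatim from the proof of Theorem~\ref{maintheorem}: the uniform kernel bound $\|K_{x,J_n}^\omega\|_{K_{J_n}^\omega}^2 \leq M^2\zeta(2\omega)$, a direct analog of Lemma~\ref{boundEK}, finite precisely because $\omega > 1/2$, which is the reason the step-size constraint reads $\gamma_0\leq M^2\zeta(2\omega)/2$; the spectral equivalence between $T_{X,J_n}^\omega$ and its $\nu$-analog, delivered by A2 and A3; and the careful handling of the sequence of nested, growing RKHSs indexed by $J_n$, which was already developed in the proof of Theorem~\ref{maintheorem} and applies here without essential change.

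The main obstacle I anticipate is controlling the bias term in the averaged-SGD analysis uniformly across $\omega \in (1/2, s)$, especially as $\omega$ approaches $1/2$ and the preconditioning becomes nearly flat. The naive bound $\|P_{J_n}f_\rho\|_{K_{J_n}^\omega}^2 \leq Q^2$ is only $O(1)$, and one must combine the stronger Sobolev source condition $\sum(\theta_j j^s)^2 \leq Q^2$ with the decay of $\gamma_n = \gamma_0 n^{-1/(2s+1)}$ and the averaging in order to push the bias contribution below $n^{-2s/(2s+1)}\log^2 n$. The specific choice $J_n = n^{1/(2s+1)}\log^2 n$ is calibrated so that the variance $J_n/n$ exactly matches the near-optimal rate, with the $\log^2 n$ slack providing just enough room for the truncation bias and initialization bias to both vanish strictly faster; any faster growth of $J_n$ would inflate the variance, and any slower growth would make the truncation bias dominant.
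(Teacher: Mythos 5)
Your architecture is essentially the paper's: the proof of Theorem~\ref{maintheorem_fixedJn} reuses the $\Delta_n=\eta_n+\vartheta_n$ decomposition of Theorem~\ref{maintheorem}, bounds the initialization term by noting that $\|f_\rho\|_{K}^2=\sum_j j^{2\omega}\theta_j^2\le Q^2$ is finite precisely because $\omega\le s$, and obtains the variance bound $O(n^{-2s/(2s+1)}\log^2 n)$ from the fact that $T_{X,J_n}$ has rank at most $J_n$ (Lemma~\ref{lemma:myvariancecontrol}), together with the uniform spectral bounds of Lemma~\ref{lemma:samejdifferentkernels} and the gradient bound $R^2=M^2\zeta(2\omega)$. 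You correctly identify all three mechanisms, including that the finite rank is the source of the $\log^2 n$ and that $\omega>1/2$ is what makes $R$ finite.

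The one step that would fail as written is your treatment of the truncation: you project onto $P_{J_n}f_\rho$ and assert that $f_\rho-P_{J_n}f_\rho$ "enters the stochastic gradient as additional zero-mean noise." It does not. Under A2--A3 the $\psi_j$ are orthonormal only with respect to $\nu$, not $\rho_X$, so
\begin{equation*}
E\left[\left(f_\rho-P_{J_i}f_\rho\right)(X)\,K_{X,J_i}\right]=T_{X,J_i}\left(f_\rho-P_{J_i}f_\rho\right)\neq 0
\end{equation*}
in general; the tail contributes a deterministic drift (a bias), not centered noise, and folding it into the variance term is not justified. This is exactly why the paper never projects $f_\rho$: it tracks $\hat f_n-f_\rho$ in the full RKHS $\mathcal{H}_{K^\omega}$ (where $f_\rho$ has finite norm), so the truncation surfaces only through the difference $K_{X_n}-K_{X_n,J_n}$ inside the bias recursion of Section~\ref{initialsection}, where it is absorbed via Young's inequality and the decay $\sum_{j>J_n}\theta_j^2\le Q^2J_n^{-2s}$. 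Your drift term is likely controllable by a similar perturbation argument (its $K$-norm per step is $O(\gamma_i J_i^{-s})$), but that argument needs to be made; as stated, the "zero-mean" claim is a genuine gap. A secondary, smaller point: your worry about the bias as $\omega\downarrow 1/2$ is unnecessary --- the standard averaged-SGD bound $\|f_\rho\|_K^2/(n\gamma_n)=O(n^{-2s/(2s+1)})$ already suffices once $\|f_\rho\|_K\le Q$, which holds uniformly over $\omega\in(1/2,s)$.
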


\textbf{Note 1:} The requirement of $t_j < j^{-1}$ is to guarantee a finite ``second moment" of the gradient, recall the Note 3 under Theorem~\ref{maintheorem}. Once such a minimal requirement is satisfied, the decay rate of $t_j$ does not influence neither the rate of statistical guarantee, nor the computational expense of the estimators. As we will discuss very soon in section~\ref{subsection:optimalspace}, the choice of $J_n = n^{\frac{1}{2s+1}}\log^2 n$ in Theorem~\ref{maintheorem_fixedJn} and Theorem~\ref{maintheorem} would result in algorithms that are both statistically and computationally near-optimal, which is very rare in the literature of online nonparametric learning and could be of interest in practice.

\textbf{Note 2:} The most direct form of projection estimator determines the basis functions' coefficients by solving a (unpenalized) least square problem \eqref{NPLS} in which there are no learning rates involved. It is the truncation level $J_n$ that determines the bias-variance trade-off and statistical performance. In Theorem~\ref{maintheorem_fixedJn} we present a stochastic approximation version of such a result. From a reproducing-kernel methodology perspective, Theorem~\ref{maintheorem} investigates the cases when the capacity of the kernel ($\omega$) matches the source smoothness ($s$); in Theorem~\ref{maintheorem_fixedJn} we discussed under what conditions the mismatch between these two quantities does not affect the statistical (and computational) properties of Sieve-SGD. We also note that the overall proof structures of Theorem~\ref{maintheorem} and Theorem~\ref{maintheorem_fixedJn} are similar; the difference is, in the proof of Theorem~\ref{maintheorem} we need Lemma~\ref{lemma:Jnvariance} and related technical results, but for Theorem~\ref{maintheorem_fixedJn} we use Lemma~\ref{lemma:myvariancecontrol} instead.

\subsection{Near optimal space expense}
\label{subsection:optimalspace}

In this section we will show that Sieve-SGD is asymptotically (nearly) space-optimal for estimating $f_{\rho}$ in a Sobolev ellipsoid under the conditions listed in Section~\ref{section:modelassumptions}. We will show that, even with computer round-off error, Sieve-SGD only needs $\Theta(n^{\frac{1}{2s+1}}\log^3 n)$ \emph{bits} to achieve the minimax rate for MSE (or off by a $\log^2(n)$ term when $\omega \neq s$ as stated in Theorem~\ref{maintheorem_fixedJn}), and further, that there is no estimator with $o(n^{\frac{1}{2s+1}})$ bits of space expense that can achieve the minimax-rate for estimating  $f_{\rho}\in W(s,Q)$. In our analysis we note that computers cannot store decimals in infinite precision, and formally deal with a modified version of our algorithm that stores coefficients in fixed precision (that grows in $n$): This necessitates the extra $\log(n)$ term (compared with the number of basis function needed in Theorem~\ref{maintheorem} and \ref{maintheorem_fixedJn}). The modified algorithm with fixed, but growing precision still results in the same MSE when round-off error is not considered.

We first give a more formal definition of the space expense of an estimator in our analysis. A regression estimator can be seen as a mapping $M_n$ from the data $Z_1^n=\{(X_i,Y_i)\ |\ i=1,2,...,n\}$ to a function $\hat f_n\in\mathcal{F}$. For any such $M_n$ that can be engaged by a computer, must be decomposable into an ``encoder-decoder" pair $(E_n,D_n)$. Here $E_n$ represents the ``encoder" that compresses the information into computer memory. Formally, we define $E_n$ to be a mapping from $Z_1^n$ to a binary sequence of length $b_n$. And the corresponding $D_n$ is the ``decoder" of the binary sequence that translates the information saved in memory back to a mathematical object $\hat f_n$. Generally, the binary sequence length $b_n$ will increase with $n$: As more information is contained in the data, we need more memory to store an increasingly accurate estimate of our regression function.

Given an estimator that can decomposed into a pair $(E_n, D_n)$, one can see that the decomposition is not unique. There are, in fact, infinitely many pairs that are trivially different from each other for any such estimator. Moreover, $E_n,D_n$'s can be random mappings if we allow random algorithms: For example, random forests include additional randomness due to bootstrapping/variable selection. In order to be more precise regarding memory complexity constraints, we introduce the following formalization.

\begin{definition}[$b_n$-sized estimator]Given a sequence of integers $(b_i)_{i\in\mathbb{N}}$, we say an estimator $M_n:(\mathcal{X}\times\mathbb{R})^n\rightarrow \mathcal{F}$ is a $b_n$-sized estimator if it satisfies the following conditions:
\begin{enumerate}
    \item For every $n$, there exists an encoder mapping $E_n:(\mathcal{X}\times\mathbb{R})^n\rightarrow \{0,1\}^{b_n}$, and a decoder mapping $D_n: \{0,1\}^{b_n}\rightarrow\mathcal{F}$ such that \begin{equation}
        M_n = D_n\circ E_n
    \end{equation}
    \item The decoder $D_n$ is a known, fixed mapping. $E_n$ can be either a random or fixed mapping.
\end{enumerate}
\end{definition}

We use the sample mean as a toy example to illustrate the above definition. In practice, the sample mean is usually a 64-sized estimator of the population mean. Here 64 stands for the number of bits needed to represent a double-precision floating point number. In this case the size $b_n = 64$ does not increase with sample size $n$. However not every real number can be arbitrarily precisely specified by a fixed-length floating-point number, so a careful asymptotic analysis of estimation of the mean suggests that perhaps we should store a sample mean with growing levels of precision, i.e. $b_n$ would need to grow with $n$. A binary sequence of length $s$ can specify $2^s$ real numbers, so to achieve the $O(n^{-1})$ statistically optimal bound for mean estimation, a $\log(n)$-sized version of sample mean is formally required. In practice, 64-bit precision is generally more than enough for mean estimation. Nevertheless, in this manuscript we aim to give a more formal and precise asymptotic analysis of our Sieve-SGD estimator.

Readers who are more familiar with computational complexity theory may find our definition similar to a (probabilistic) Turing machine. However, in our framework the machine does not use binary sequences on tapes as input and output; nor do we need to identify the basic operations on the "machine". We aimed to remove unnecessary complexity for readers with a more statistical background. Discussion of Turing machines using finite length working tape can be found in \cite[Chapter~4]{arora2009computational}.

To construct Sieve-SGD estimators that achieve (near) optimal MSE, we only need to store the coefficients of the $J_n = \Theta(n^{\frac{1}{2s+1}}\log^2 n)$ basis functions. However, as in our example with the sample mean, we need to be careful about the precision with which we store those coefficients. We need to determine: i) how small we require the round-off error to be in order to maintain the statistical optimality of Sieve-SGD, and ii) how much space expense is required to achieve such precision. In Appendix    ~\ref{subsection:spaceexpenseofSieve-SGD} we identify how round-off error is introduced into the system and how it decreases as more bits are used to store each coefficient. In Corollary~\ref{corollary:spaceexpense} we show that by using $\Theta(\log n)$ bits per coefficient, a $O(n^{\frac{1}{2s+1}}\log^3n)$-sized version of Sieve-SGD can achieve the same optimal convergence rate as in the infinite precision setting (or equivalently round-off-error-free setting). 

Combining the above result with the following theorem, we can conclude that no MSE rate-optimal estimator can require less memory by a polynomial factor than Sieve-SGD.
\begin{theorem}
\label{theorem:minimummemory}
Let $b_n$ be a sequence of integers, and $b_n = o\left(n^{\frac{1}{2s+1}}\right)$. Let $\mathcal{M}(b_n)$ be the collection of all $b_n$-sized estimators, then we have
\begin{equation}
   \lim_{n\rightarrow\infty} \inf_{M_n\in\mathcal{M}(b_n)}\sup_{f_{\rho}\in W(s,Q,\{\psi_j\})} E\left[n^{\frac{2s}{2s+1}}\|M_n(Z_1^n) - f_{\rho}\|_{L^2_{\rho_X}}^2\right] = \infty
\end{equation}
i.e. no such $b_n$-sized estimators can be rate-optimal.
\end{theorem}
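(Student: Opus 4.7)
The plan is an information-theoretic counting argument. The key structural observation is that the decoder $D_n$ is a \emph{fixed} deterministic map $\{0,1\}^{b_n}\to\mathcal{F}$, so its image $\mathcal{G}_n:=D_n(\{0,1\}^{b_n})$ is a data-independent subset of $\mathcal{F}$ with $|\mathcal{G}_n|\le 2^{b_n}$. No matter how the (possibly random) encoder $E_n$ behaves, every $b_n$-sized estimator $M_n(Z_1^n)=D_n(E_n(Z_1^n))$ lies in this non-random finite set $\mathcal{G}_n$. A lower bound therefore follows if I can exhibit an $L^2_{\rho_X}$-packing of $W(s,Q,\{\psi_j\})$ that is too large to be well-covered by any set of size $2^{b_n}$.

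For the packing I would use the standard hypercube construction inside the Sobolev ellipsoid. For each $\epsilon\in\{-1,+1\}^{M_n}$, set $f_\epsilon:=\delta_n\sum_{j=1}^{M_n}\epsilon_j\psi_j$ with $\delta_n:=c_1 M_n^{-s-1/2}$, picking $c_1$ so that $\delta_n^2\sum_{j\le M_n}j^{2s}\le Q^2$; then $f_\epsilon\in W(s,Q,\{\psi_j\})$. The Varshamov--Gilbert lemma supplies a subfamily of $N_n\ge 2^{M_n/8}$ codewords with pairwise Hamming distance at least $M_n/8$, and $\nu$-orthonormality of $\{\psi_j\}$ together with the density lower bound $p_X\ge\ell$ from A2 yields
\[
  \|f_{\epsilon^{(i)}}-f_{\epsilon^{(j)}}\|_{L^2_{\rho_X}}^2\;\ge\;\ell\cdot 4\delta_n^2\cdot(M_n/8)\;=\;c_2 M_n^{-2s}
\]
for a positive constant $c_2$. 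I would then set $M_n:=8b_n+8$; the hypothesis $b_n=o(n^{1/(2s+1)})$ gives $M_n=o(n^{1/(2s+1)})$, while $N_n\ge 2\cdot 2^{b_n}$.

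The pigeonhole step is now immediate. Let $\eta_n:=\tfrac12\sqrt{c_2}\,M_n^{-s}$, so that the $f_{\epsilon^{(i)}}$ are pairwise $2\eta_n$-separated. Each $g\in\mathcal{G}_n$ can therefore lie within $\eta_n$ of at most one packing point (else the triangle inequality would contradict the separation), so at most $|\mathcal{G}_n|\le 2^{b_n}\le N_n/2$ of the $f_{\epsilon^{(i)}}$ are within $\eta_n$ of any element of $\mathcal{G}_n$. At least $N_n/2$ ``bad'' indices $i^*$ thus satisfy $\min_{g\in\mathcal{G}_n}\|g-f_{\epsilon^{(i^*)}}\|_{L^2_{\rho_X}}\ge\eta_n$. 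Fixing such an $i^*$ and any joint distribution $\rho$ satisfying A1--A4 with $f_\rho=f_{\epsilon^{(i^*)}}$ (e.g., $\rho_X$ uniform on $\mathcal{X}$ and $Y=f_{\epsilon^{(i^*)}}(X)+\xi$ for bounded independent $\xi$) yields $\|M_n(Z_1^n)-f_\rho\|_{L^2_{\rho_X}}^2\ge\eta_n^2$ almost surely, so
\[
  \sup_{f_\rho\in W(s,Q,\{\psi_j\})} E\!\left[n^{\frac{2s}{2s+1}}\|M_n(Z_1^n)-f_\rho\|_{L^2_{\rho_X}}^2\right]\;\ge\;\tfrac{c_2}{4}\left(\frac{n^{1/(2s+1)}}{M_n}\right)^{2s}\!\longrightarrow\infty,
\]
uniformly over $M_n\in\mathcal{M}(b_n)$, yielding the theorem.

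The main obstacle is calibrating $M_n$: we simultaneously need $N_n\gg 2^{b_n}$ (forcing $M_n\gtrsim b_n$) and $n^{2s/(2s+1)}M_n^{-2s}\to\infty$ (forcing $M_n\ll n^{1/(2s+1)}$). The assumption $b_n=o(n^{1/(2s+1)})$ is precisely the gap that makes both compatible; the remaining ingredients (Varshamov--Gilbert, the $\nu$-to-$\rho_X$ norm comparison via A2, and the data-free pigeonhole stemming from the determinism of $D_n$) are routine. I expect the technical care to lie mainly in writing the hypercube argument so that the statement holds \emph{uniformly} in $M_n\in\mathcal{M}(b_n)$, but since the packing is constructed without reference to any particular estimator and the lower bound on the output error is almost sure, no union bound or data-dependent analysis is needed.
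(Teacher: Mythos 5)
Your proposal is correct and is essentially the paper's argument: both exploit that the decoder image $D_n(\{0,1\}^{b_n})$ is a data-independent set of at most $2^{b_n}$ functions, which is too small to $\delta$-cover $W(s,Q,\{\psi_j\})$ at scale $\delta\asymp b_n^{-s}$, so some $f_{\rho}$ must be far from everything the estimator can output, almost surely. The only difference is cosmetic — you build the packing explicitly via the hypercube/Varshamov--Gilbert construction (and handle the $\nu$-to-$\rho_X$ comparison via A2 explicitly), whereas the paper simply cites the known metric entropy $\log\mathcal{N}(\delta;W(s),\|\cdot\|_2)\asymp\delta^{-1/s}$ and picks an intermediate sequence $c_n$ with $b_n=o(c_n)$, $c_n=o(n^{1/(2s+1)})$.
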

This theorem tells us we cannot find any minimax rate-optimal $o(n^{\frac{1}{2s+1}})$-sized estimator. Thus the best rate-optimal estimator one can expect to find is a $\Theta(n^{\frac{1}{2s+1}})$-sized estimator: Sieve-SGD's space expense only misses this lower bound by a poly-logarithmic factor.

We give the proof of the above theorem in Appendix~\ref{app:memoryusage}. Although here we focus on regression in Sobolev spaces, the technique used can be applied to other hypothesis spaces. The proof is based on the concept that metric-entropy is the minimal number of bits needed to represent an
arbitrary function from a function space up to $\epsilon$-error, which can be traced back to \cite{kolmogorov1959varepsilon}. Also, following a very similar argument, one can prove that no constant-sized estimator can be rate-optimal (or even consistent) for parametric regression problems. We discuss this further in the Appendix~\ref{app:memoryusage}. We also include some discussion of the time expense in Section~\ref{section:discussion}.

\section{Simulation study}
\label{section:simulation}
\subsection{Sieve-SGD for online regression}
\label{section:regressionsimulation}
In this section, we illustrate both the statistical and computational properties of Sieve-SGD with simulated examples. The two examples we use have different $f_{\rho}$, $W(s,Q,\{\psi_j\})$ and $\rho_X$. The user-specified measure $\nu$ is taken as the uniform distribution over $[0,1]$ in both. We provide the details of our simulation settings in Table~\ref{settingtable}. These two examples are designed for verifying our theoretical guarantees: The $f_{\rho}$ we use have known explicit series expansion or is constructed explicitly using the basis function $\psi_j$ (to ensure the truth is hard enough to learn in the assumed Sobolev ellipsoid). In Appendix~\ref{app:multivariate} we provide more numerical examples to better mimic the practical application: we engage with multi-variate features and compare Sieve-SGD with many popular machine learning methods. 

\textbf{Example 1} In this example, we examine the empirical performance of Sieve-SGD and compare it with two other methods in batch or online nonparametric regression: kernel ridge regression (KRR) \cite{wainwright2019high} and kernel SGD \cite{dieuleveut2016nonparametric}. We will see that the relationship between generalization error $E\|\bar f_n- f_{\rho}\|_2^2$ and sample size corresponds well with our theoretical expectations presented in Theorem~\ref{maintheorem} (Fig~\ref{example1}).

The true regression function we chose for Example~1 is also used in the analysis of kernel SGD \cite{dieuleveut2016nonparametric}. In that paper, kernel SGD with Polyak averaging is compared with other (kernel-based) nonparametric online estimators \cite{tarres2014online,ying2008online}, and has been shown to have similar or better performance, so we include only kernel SGD with averaging as the reference online-estimator. We also note that although KRR performs slightly better than online methods, its time expense (which is of order $\Theta(n^3)$ per update) is dramatically more than online-estimators (kernel SGD $\Theta(n)$, Sieve-SGD $\Theta(J_n)$, per update).

We compare the empirical performance of Sieve-SGD under two different distributions of $X$. In Fig~\ref{example1} panel (A), $X$ has an uniform distribution over $[0,1]$ and in panel (B) it has a distribution with a strictly smaller support (uniform over $[0.25, 0.75]$). The trigonometric basis functions we use are orthonormal w.r.t. $\nu$, the Lebesgue measure over $[0,1]$ (panel (A)) but not w.r.t. the one in panel (B). Although only the feature distribution in panel (A) satisfies the distribution assumption in A2, in both cases Sieve-SGD achieves the optimal-rate. This is a heuristic evidence indicating the lower bound requirement in A2 may be due to some artifacts in the proof. 
\begin{table}[!htbp]
\caption{Settings of simulation studies. $B_4(x) = x^{4}-2 x^{3}+x^{2}-\frac{1}{30}$ is the 4-th Bernoulli polynomial. $\{x\}$ indicates the fractional part of $x$. } 
\label{settingtable}
\begin{center}
\begin{tabular}{@{}ccc@{}}
\hline
& Example 1 & Example 2 \\
\hline
True $f_{\rho}$ & $B_4(x)$& $4\sqrt{2}\sum_{j=1}^{50} (-1)^{j+1}j^{-4}\sin((2j - 1)\pi x/2)$\\
ellipsoid para. $s$ & 2 & 3\\
$J_n$ & $n^{0.21}$  & $n^{0.10}$ \& $n^{0.15}$ \& $n^{0.43}$ \\
$t_j$ & $j^{-1.02}$ \& $j^{-4}$ & $j^{-6}$\\
\multirow{2}{*}{$\psi_j(x)$} & $\sin(2\pi \lceil j/2\rceil x)$, $j$ is even& \multirow{2}{*}{$\sqrt{2}\sin(\frac{(2j-1)\pi x}{2})$}\\
&$\cos(2\pi \lceil j/2\rceil x)$, $j$ is odd&\\
Kernel $K(s,t)$ & $-\frac{1}{24}B_4(\{ s-t\})$& $\min(s,t)$\\
Noise & Unif[$-0.02,0.02$] or Unif[$-0.2,0.2$]&  Normal(0,1)\\
$\gamma_0$ & 3& 1\\
\hline
\end{tabular}
\end{center}
\end{table}
\begin{figure}[!htbp]
\includegraphics[width=\textwidth]{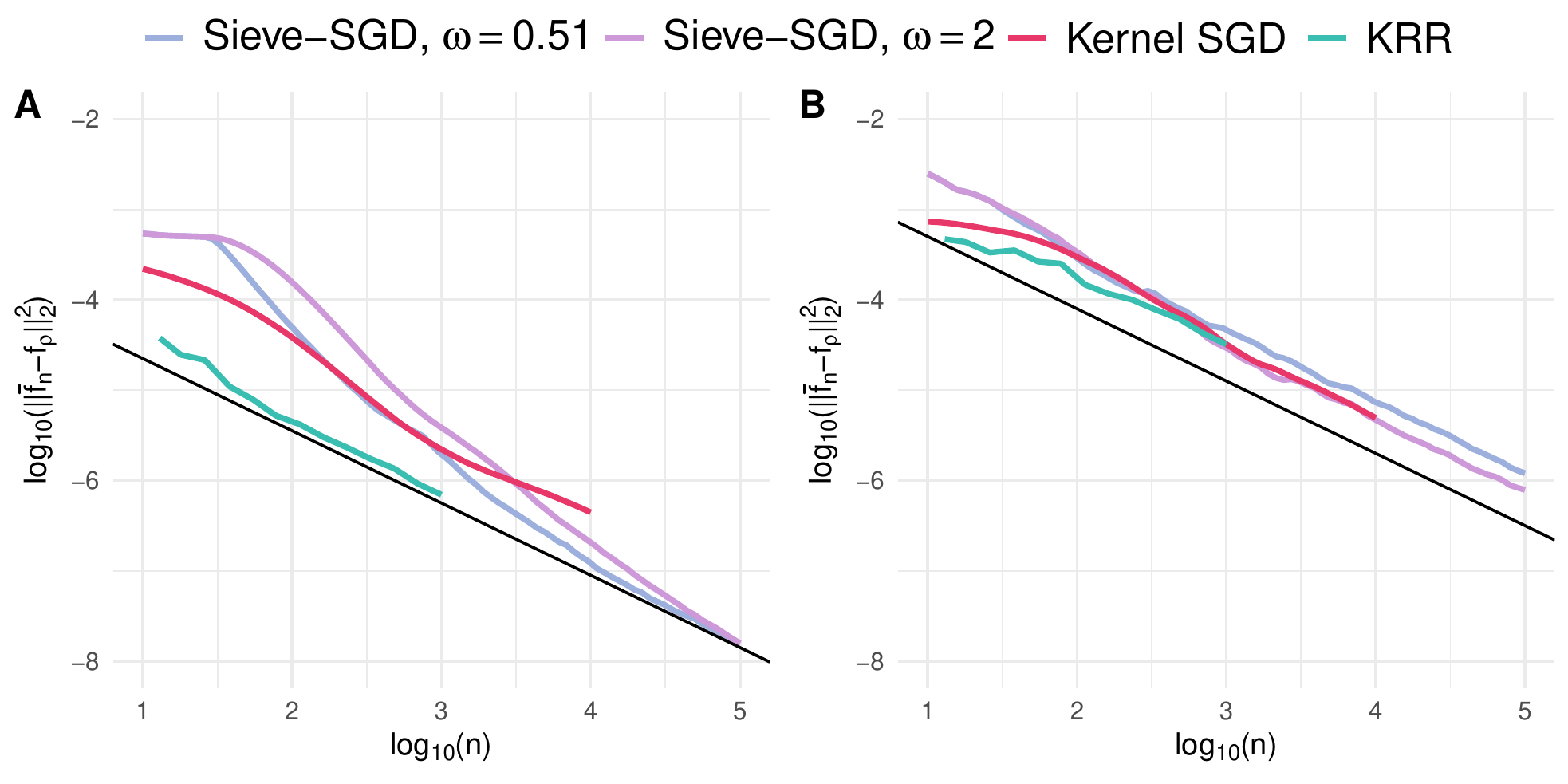}
\caption{Example 1, $\log_{10}\|\bar f_{n}-f_{\rho}\|_2^2$ against $\log_{10} n$. The Black line has slope $=-4/5$, which represents the optimal-rate. Each curve is calculated as the average of 100 repetitions. (\textbf{A}) $X$ is uniformly distributed over $[0,1]$. In this setting, SNR $\sim 3$. (\textbf{B}) $X$ has a distribution in which $\psi_j$ are not orthonormal. We present the results with very large noise, SNR$\sim 0.03$. Due to different computational costs, we chose different maximum $n$ for different methods. }
\label{example1}
\end{figure}

\textbf{Example 2} In this example, we consider the performance of Sieve-SGD under different $J_n = \lfloor n^{\alpha}\rfloor$ (number of basis functions). The $f_{\rho}$ we use is explicitly constructed with basis functions $\psi_j(x) = \sqrt{2}\sin\left((2j-1)\pi x/2\right)$ and we tune the proposed method based on the (correct) assumption that it belongs to Sobolev ellipsoid $W(3,Q,\{\psi_j\})$ (see Theorem 4.1 of \cite[Chapter~1]{hernandez1996first} for completeness and orthonormality of $\{\psi_j\}$).

According to Theorem~\ref{maintheorem}, in order to guarantee statistical optimality, we need $\alpha \geq \frac{1}{(2s+1)} \sim 0.14$. We consider several values of $\alpha$, one below the this threshold, and two above it:
\begin{equation}
    \mathbf{0.10}<\frac{1}{2s+1} \sim 0.14 < \mathbf{0.15} < \mathbf{0.43}
\end{equation}

As we can see from Fig~\ref{example2} (A), when $\alpha = 0.15\ \&\ 0.43$, Sieve-SGD is rate-optimal as expected. When $\alpha = 0.10$, we are using too few basis functions, which results in the sub-optimal statistical performance. Such a suboptimality is because of the bias term: there are too few basis functions used. In fact, the parameter setting $\alpha = 0.1$ is so small that there are only $3$ basis functions used when $n = 10^5$. To verify the above statement, we can briefly calculate when the second and the third basis functions are added in: $(10^3)^{0.1} \sim 2$, this corresponds to the first acceleration of the learning rate around $\log_{10}(n) = 3$; similarly, $(10^{4.8})^{0.1}\sim 3$, which explains the second one.

In Fig~\ref{example2} (B), we show the CPU time for reference. For Sieve-SGD, the \textit{accumulated} CPU time should be on the order of $\Theta(n^{1+\alpha})$: The larger $\alpha$, the more basis functions required, the slower the algorithm. We also include the CPU time of kernel SGD with averaging as a benchmark, which has a cumulative computational expense of order $\Theta(n^2)$. The code is written in R (4.0.4), and runs on (the CPU of) a machine with 1 Intel Core m3 processor, 1.2 GHz, with 8 GB of RAM.
\begin{figure}[!htbp]
\includegraphics[width=\textwidth]{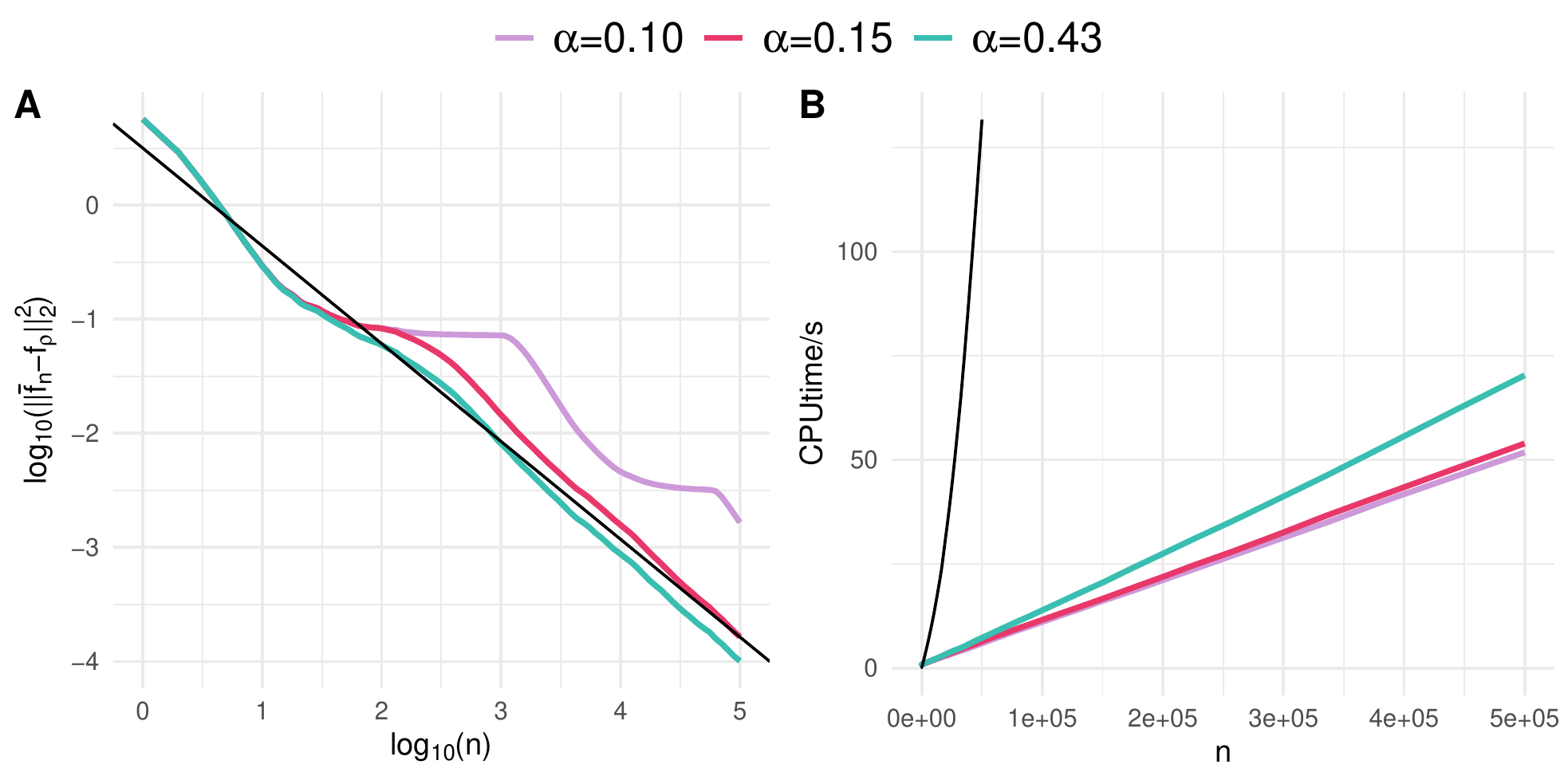}
\caption{Example 2, effect of truncation exponents $\alpha = 0.10,0.15,0.43$. (A) Statistical performance, $\log_{10}\|\bar f_{n}-f_{\rho}\|_2^2$ against $\log_{10} n$. The black line has slope $=-6/7$, which represents the optimal-rate. Each curve is calculated as the average of 100 repetitions. (B) The accumulated CPU time to process $n$ observations. The black line is the CPU time of kernel SGD, included for benchmark.}
\label{example2}
\end{figure}
\subsection{Sieve-SGD for Alternative Convex Losses}
In this section, we provide the results of an experiment applying Sieve-SGD to online nonparametric logistic regression. Although this manuscript gives no theoretical guarantees in this setting, it is still of interest to see the empirical performance of Sieve-SGD for general convex loss. Here, the distribution of class labels $Y$ was generated by $Y\sim 2\operatorname{Ber}(g(X))-1$, where $(g(x))^{-1} = 1+\exp(-5(1-2|x-0.5|))$; and the distribution of $X$ was uniform over $[0,1]$. Thus, the minimizer $f^*$ of loss $E[\ell(Y,f(X))] = E[\log(1+\exp(-Yf(X)))]$ is $f^* = 5(1-2|x-0.5|)$.

When we apply the Sieve-SGD estimator~\eqref{eq:convexSieve-SGD} to this problem, we assume
\begin{equation}
    f^*\in W\left(1,Q,\left\{\sqrt{2}\sin\left((2j-1)\pi x/2\right)\right\}\right)
\end{equation}
We try several $\alpha = 0.10,0.33,0.50$, all with $\gamma_0 = 6$. As we can see from Fig~\ref{example3}, the regret $E[\ell(\bar f_n) - \ell(f^*)]$ converges to zero at an apparent rate of $ n^{-2/3}$ when $\alpha = 0.33,0.50$ (which would agree with our result for squared error loss). When the number of basis functions increases too slowly (here is $\alpha = 0.10$), the regret decreases slowly after $\sim 10$ observations (for similar reason of overflowing bias term as we noted in section~\ref{section:regressionsimulation}).
\begin{figure}[!htbp]
\includegraphics[width=0.7\textwidth]{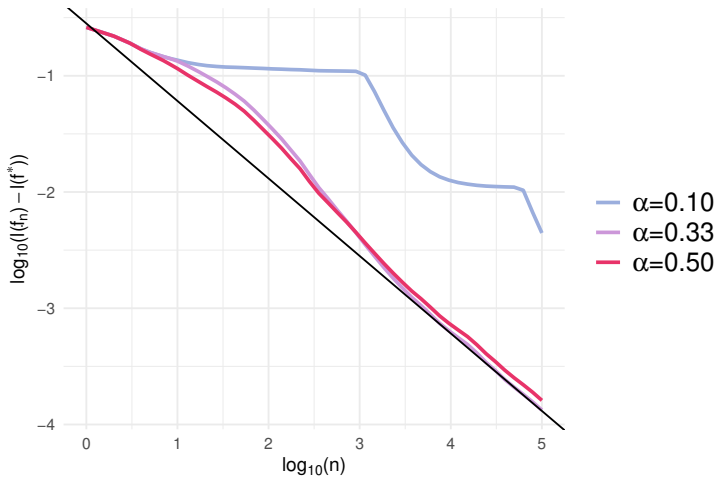}
\caption{Example 3, empirical performance of Sieve-SGD in nonparametric logistic regression problem. Plot $\log_{10}(l(\bar f_n) - l(f^*))$ against $\log_{10} n$. The Black line has slope $=-2/3$. Each curve is calculated as the average of 100 repetitions.}
\label{example3}
\end{figure}
\newpage

\section{Discussion}
\label{section:discussion}
In this paper, we considered online nonparametric regression in a Sobolev ellipsoid. We proposed the \emph{Sieve Stochastic Gradient Descent estimator (Sieve-SGD)}, an online estimator inspired by both a) the nonparametric projection estimator, which is a special realization of general sieve estimators; and b) estimators constructed using stochastic gradient descent algorithms. By using an increasing number of basis functions, Sieve-SGD has rate-optimal estimation error and is computationally very efficient.

For online learning problems with general convex losses, the optimal estimation rate depends on both the hypothesis space and loss function (e.g. whether it is Lipschitz or strongly convex). In this paper we did not establish theoretical guarantees for Sieve-SGD when applied to general convex loss, however, we gave some empirical evidence that it can perform well there. We believe our proof techniques might be extended beyond squared-error loss, perhaps using ideas in \cite{bach2013non, caponnetto2007optimal,marteau2019globally,marteau2019beyond}.

We've seen a rich collection of work in the past decade targeting the optimality of estimators under computational (especially time expense) constraints. A lot of those results are established in the context of sparse PCA and related sparse-low-rank matrix problems, e.g. \cite{cai2017computational,gao2015minimax,gao2017sparse,ma2015computational,wang2016statistical,zhang2014lower}. The main focus of these work is usually comparing the statistical performance of the best polynomial-time algorithm with that of the "optimal" algorithm without any computational restrictions. By relating their statistical problem with a known NP problem \cite{arora2009computational}, they can usually show the sub-optimality of polynomial-time algorithms under the famous conjecture $P\neq NP$. However, for the nonparametric regression problem in this paper, there is a polynomial-time estimator that can achieve the global minimax-rate. It is of theoretical interest to know if there are any statistically rate-optimal online estimators that require less than $\Theta(n^{1+\frac{1}{2s+1}})$ time-expense: We hypothesize that there are not.

\section*{Acknowledgments}
N.S and T.Z. were both supported by NIH grant R01HL137808.

\newpage
\begin{appendices}

\counterwithin{figure}{section}

\section{Algorithm of Sieve-SGD, Numerical Version}
\label{app:algorithm}
In the main text, section~\ref{section:algorithm}, we presented a functional form of the proposed Sieve-SGD algorithm. To facilitate the comprehension of it, we also attach an equivalent numerical version of it.

\noindent\rule{14cm}{0.4pt}

\textbf{Proposed Algorithm: Sieve Stochastic Gradient Descent (Sieve-SGD)}

\noindent\rule{14cm}{0.4pt}

Set $\alpha, \omega>0$, step size $\{\gamma_i\}$ and basis functions $\{\psi_j\}$. 

Initialize $\bar \beta_{j}, \hat \beta_{j} = 0$ for all $j\in\mathbb{N}^+$.

For $i = 1,2,...$ :
\begin{enumerate}
\item Calculate $J_i = \lfloor i^{\alpha} \rfloor$, collect data pair $(X_i,Y_i)$.
\item Update $\hat f_i$:
\begin{equation}
    \text{res}_i \leftarrow Y_i - \sum_{j=1}^{J_{i-1}} \hat\beta_j \psi_j(X_i)
\end{equation}

For $j = 1,...,J_i$:
\begin{equation}
    \hat\beta_j \leftarrow \hat\beta_j + \gamma_i\text{res}_i\left(j^{-2\omega} \psi_j(X_i)\right)
\end{equation}
\item Update $\bar f_i$\\

For $j = 1,...,J_i$:

\begin{equation}
    \bar\beta_j \leftarrow \frac{i}{i+1}\bar\beta_j + \frac{1}{i+1} \hat \beta_j
\end{equation}
\end{enumerate}

\noindent\rule{14cm}{0.4pt}

\section{Multivariate Regression Problems}
\label{app:multivariate}
In this section, we will give additional discussion of the technical details for multivariate regression using the Sieve-SGD estimator. 

\subsection{Hyperbolic cross and Sieve-SGD}
In main text Section~\ref{subsection:multivaraite}, we discussed using a tensor product basis to approach multivariate problems. Here we go into more details and more technical discussion. Given an univariate orthonormal basis $\{\psi_j, j\in\mathbb{N}^+\}$, the set of tensor product functions $\{\psi_{\mathbf{j}}(\mathbf{x}) = \prod_{k=1}^p \psi_{\mathbf{j}^{(k)}}(\mathbf{x}^{(k)}), \mathbf{j}\in\left(\mathbb{N}^+\right)^p\}$ is also an orthonormal basis ($\mathbf{v}^{(k)}$ is the $k$-th component of $\mathbf{v}\in\mathbb{R}^p$). However, there are more choices of the order in which we include basis functions when estimating an unknown regression function. We propose using the index product $\prod_{k=1}^p \mathbf{j}^{(k)}$ to determine such an ordering. That is, basis functions with smaller index product will be used earlier when constructing Sieve-SGD, such a choice is called hyperbolic cross in the literature \cite{dung2017hyperbolic, shen2010sparse}. Before we discuss the intuition of such an ordering (which has been established in the literature), we present some numerical examples of applying Sieve-SGD in multi-variate regression problems. 

We consider two dimension settings of the feature variable $\mathbf{X}$, $p = 2 \text{ and } 10$. The feature vector is defined as: $\mathbf{X}^{(1)} = U_1$, $\mathbf{X}^{(k)} = (U_{k} - U_{k-1} + 1)/2$ for $k = 2,...,p$. Here $U_k$ are independent $\text{Unif}[0,1]$ variables. The true regression function is defined as
\begin{equation}
    f_{\rho}(\mathbf{x}) = \sum_{k=1}^p\sum_{l = k}^p (0.5 - |\mathbf{x}^{(k)} - 0.5|)(0.5 - |\mathbf{x}^{(l)} - 0.5|)
\end{equation}
And the outcome $Y = f_{\rho}(\mathbf{x})+\epsilon$ is contaminated by a normal distributed noise, $\text{SNR}=3$. The main update rule of Sieve-SGD we applied here is
\begin{equation}
\hat{f}_{i}=\hat{f}_{i-1}+\gamma_0 i^{-1/(2s+1)}\left(Y_{i}-\hat{f}_{i-1}\left(\mathbf{X}_{i}\right)\right) \sum_{\mathbf{j}\in \mathcal{J}_{prod}(cpi^{1/(2s+1)})} \left(\prod_{k=1}^{p}\mathbf{j}^{(k)}\right)^{-2\omega} \psi_{\mathbf{j}}\left(\mathbf{X}_{i}\right) \psi_{\mathbf{j}},
\end{equation}
where $\omega = 0.51$ and $s = 2$. We use $\gamma_0\in\{0.1, 0.5\}$ and $c\in\{4,8\}$: the latter two parameters may be different in each replication. In Figure~\ref{exampleB1}, we present average performance of each method based on 100 replications. The index set $\mathcal{J}_{prod}(J)\subset \left(\mathbb{N}^+\right)^p$ contains the $p$-dimension index vectors of smallest product. For example, when $p = 2$, $\mathcal{J}_{prod}(5) = \{(1,1), (1,2), (2,1), (1,3), (3,1)\}$. Arbitrary choice is used when there is a tie. The univariate basis functions we use are 
\begin{equation}
\label{eq:cosbasis}
    \psi_1(x) = 1,\quad \psi_j = \sqrt{2}\cos((j-1)\pi x), \text{ for }j\geq 2.
\end{equation}

In Figure~\ref{exampleB1}, we compare the statistical performance of SieveSGD with other popular methods in statistics and computer science communities. The Sobolev tensor product kernel we use there is $K(\mathbf{s},\mathbf{t}) = \prod_{k=1}^p \left(1+\min\{\mathbf{s}^{(k)},\mathbf{t}^{(k)}\}\right)$. The RKHS corresponding to this kernel is the tensor product of univariate Sobolev space on $[0,1]$. Like many other learning methods trained with stochastic gradient descent, it is possible to have several pass over the data set to achieve better generalization ability. While processing the data multiple times, we continue to increase the number of basis function of Sieve-SGD. That is, after 5 passes we use $cp(5\times 10^5)^{1/(2s+1)}$ basis functions. This strategy is not feasible for kernel SGD methods so for include relevant results for comparison.

\begin{figure}[!htbp]
\includegraphics[width=\textwidth]{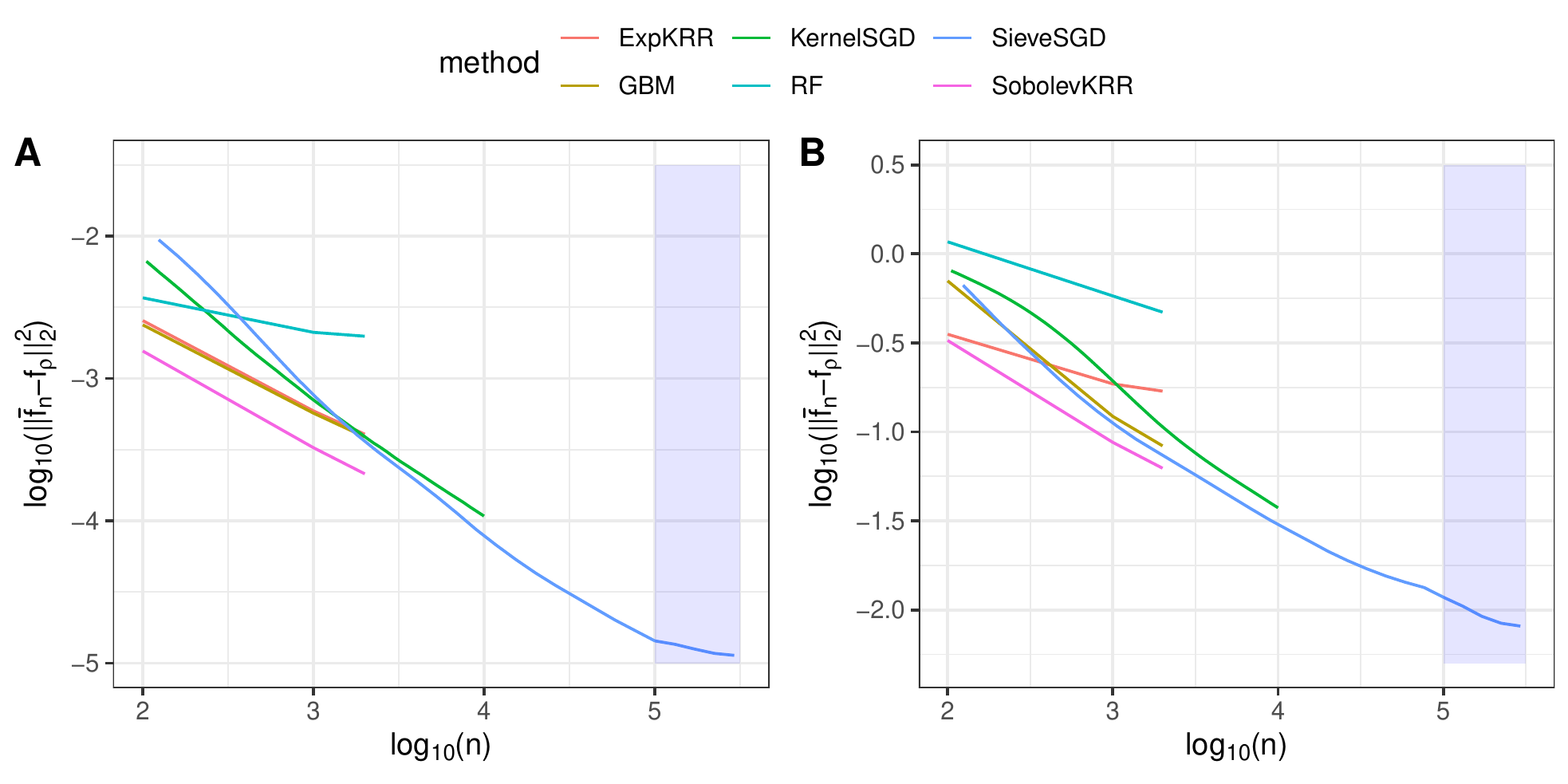}
\caption{Multivariate numerical examples of applying SieveSGD. Other benchmark methods: ExpKRR, kernel ridge regression with Gaussian kernel; KernelSGD, \cite{dieuleveut2016nonparametric} with tensor product Sobolev kernel; GBM, gradient boosting machine; RF, random forest; SobolevKRR, kernel ridge regression with tensor product Sobolev kernel. We present the result of each method under oracle hyperparameters (best testing error). The shaded area corresponds to the second to fifth pass of SieveSGD over the same training data ($10^5$ unique observations). (A) $p=2$ (B) $p = 10$.}
\label{exampleB1}
\end{figure}

Now we present some intuition behind our choice of ordering the multi-variate tensor product basis functions $\psi_{\mathbf{j}}$. While there are many (equivalent) ways to arrive at such a choice, we choose to use some basic theory in RKHS for easier exposition. While RKHSs are usually defined with the reproducing property (as we will see very soon in Theorem~\ref{RKHSdefines}), there is another characterization of the same function space which we will introduce now to help our exposition.

Let $\nu$ be a positive Borel measure on the feature domain $\mathcal{X}$ that has support$(\nu)=\mathcal{X}$, i.e. $\nu(U)>0$ for any nonempty open subset $U$ of $\mathcal{X}\subset \mathbb{R}$. Let $K$ be a Mercer kernel. We use $(\lambda_j,\psi_j)$ to denote the eigensystem of the covariance operator $T_{\nu}$ of $K$ ($\psi_j$'s are orthonormal w.r.t. $\nu$). It is known \cite{cucker2002mathematical} that the following Hilbert space is the same as the one we described in Theorem~\ref{RKHSdefines}.
\begin{theorem}
\label{theorem:equivalentdef}
Define a Hilbert space
\begin{equation}
\mathcal{H} = \left\{f \in L^2_{\nu}\mid f=\sum_{k=1}^{\infty} \theta_j \psi_{j}\ \text {with} \sum_{j=1}^{\infty} \left(\frac{\theta_j}{\sqrt{\lambda_j}}\right)^2< \infty\right\}    
\end{equation}
equipped with inner product:
\begin{equation}
\langle f, g\rangle_{\mathcal{H}}=\sum_{j=1}^{\infty} \frac{\theta_j \eta_j}{\lambda_{j}}
\end{equation}
for \(f=\sum_{j=1}^{\infty} \theta_j \psi_{j}\) and \(g=\sum_{j=1}^{\infty} \eta_{j} \psi_{j}\), where $\theta_j = \langle f, \psi_j\rangle_{L^2_{\nu}}$, $\eta_j = \langle g, \psi_j\rangle_{L^2_{\nu}}$. Then $(\mathcal{H},\langle \cdot, \cdot\rangle_{\mathcal{H}})$ is the reproducing Hilbert space of kernel $K$. And we have the following Mercer expansion of the kernel $K$:
\begin{equation}
    K(x,z) = \sum_{j=1}^{\infty} \lambda_j\psi_j(x)\psi_j(z)
\end{equation}
where the convergence is uniform and absolute.
\end{theorem}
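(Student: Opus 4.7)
The plan is to combine two classical ingredients: the spectral theorem for the compact integral operator $T_\nu$, and the Moore--Aronszajn uniqueness of the RKHS associated to a positive definite kernel. I would first establish the Mercer expansion (including the claimed uniform and absolute convergence) and then use it to identify $(\mathcal{H}, \langle \cdot, \cdot\rangle_{\mathcal{H}})$ with the reproducing kernel Hilbert space of $K$.

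First I would set up the spectral picture. Because $K$ is a Mercer kernel (continuous, symmetric, positive definite on the compact domain $\mathcal{X}$) and $\nu$ is a finite Borel measure with full support, $T_\nu$ is compact, self-adjoint, and positive on $L^2_\nu$. The spectral theorem produces the (at most countable) eigensystem $(\lambda_j,\psi_j)$ with $\lambda_j>0$, $\{\psi_j\}$ orthonormal in $L^2_\nu$, and $\sum_j \lambda_j<\infty$. Since $\psi_j = \lambda_j^{-1} T_\nu \psi_j$ and $T_\nu$ maps $L^2_\nu$ into $C(\mathcal{X})$, each eigenfunction can be taken continuous.

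The first main step is Mercer's theorem itself. Let $K_N(x,z):=\sum_{j=1}^N \lambda_j \psi_j(x)\psi_j(z)$ and $R_N:=K-K_N$. A short spectral argument shows that $R_N$ is a continuous positive definite kernel (it is the kernel of $T_\nu$ restricted to the orthogonal complement of $\mathrm{span}\{\psi_1,\dots,\psi_N\}$), so $R_N(x,x)\geq 0$ pointwise. Hence $\sum_{j=1}^N \lambda_j \psi_j(x)^2$ is monotonically increasing in $N$ and bounded above by $K(x,x)$. Pointwise convergence on $\mathcal{X}$ to a continuous limit, combined with Dini's theorem on the compact $\mathcal{X}$, yields uniform convergence of the diagonal $K_N(x,x)\to K(x,x)$. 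The Cauchy--Schwarz inequality for positive definite kernels, $|R_N(x,z)|^2 \leq R_N(x,x)R_N(z,z)$, then upgrades this to uniform convergence of $K_N$ to $K$ on $\mathcal{X}\times\mathcal{X}$, with absolute convergence of the series as a by-product. This step is the main technical obstacle; the remaining identification is essentially algebraic once the expansion is available.

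Once the Mercer expansion is in hand, I would verify $(\mathcal{H},\langle\cdot,\cdot\rangle_{\mathcal{H}})$ is a Hilbert space: the map $f=\sum_j \theta_j \psi_j \mapsto (\theta_j/\sqrt{\lambda_j})_j$ is an isometric isomorphism onto a subspace of $\ell^2$, so completeness comes for free. The Mercer expansion gives $K(x,\cdot)=\sum_j \lambda_j \psi_j(x)\,\psi_j$ with $L^2_\nu$-Fourier coefficients $\lambda_j \psi_j(x)$, and so its $\mathcal{H}$-norm squared is $\sum_j \lambda_j \psi_j(x)^2 = K(x,x)<\infty$, placing $K(x,\cdot)\in\mathcal{H}$. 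For any $f=\sum_j \theta_j \psi_j \in \mathcal{H}$, direct computation gives
\begin{equation}
\langle f, K(x,\cdot)\rangle_{\mathcal{H}} \;=\; \sum_{j=1}^\infty \frac{\theta_j\,\lambda_j \psi_j(x)}{\lambda_j} \;=\; \sum_{j=1}^\infty \theta_j \psi_j(x) \;=\; f(x),
\end{equation}
where the last equality (pointwise representation of $f$) is justified by Cauchy--Schwarz, $\sum_j |\theta_j \psi_j(x)| \leq \bigl(\sum_j \theta_j^2/\lambda_j\bigr)^{1/2}\bigl(\sum_j \lambda_j \psi_j(x)^2\bigr)^{1/2}<\infty$. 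This is the reproducing property. By Moore--Aronszajn's uniqueness of the RKHS associated to a positive definite kernel, $\mathcal{H}$ must coincide with the RKHS of $K$, completing the proof.
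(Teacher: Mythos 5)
The paper does not actually prove this statement: it is imported verbatim from Cucker and Smale (\cite{cucker2002mathematical}, Theorem~4 of Chapter~III), so there is no in-paper argument to compare against. Your proposal is a correct, self-contained reconstruction of the classical proof of exactly that cited result: Mercer's theorem for the expansion, then the $\ell^2$-isometry to get completeness of $\mathcal{H}$, the membership and reproducing-property computations for $K(x,\cdot)$, and Moore--Aronszajn uniqueness to conclude. The one step you compress too much is inside the Mercer argument: monotonicity plus the bound $\sum_{j\le N}\lambda_j\psi_j(x)^2\le K(x,x)$ only gives pointwise convergence of the diagonal to \emph{some} limit $\le K(x,x)$; to apply Dini you must first show the limit equals $K(x,x)$, i.e.\ that the residual kernel $R_\infty$ vanishes on the diagonal. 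That requires observing that the integral operator with kernel $R_\infty$ is zero (all eigenvalues have been exhausted), hence $R_\infty=0$ $\nu\otimes\nu$-a.e., and then invoking continuity together with the full-support assumption on $\nu$ to upgrade this to $R_\infty(x,x)=0$ everywhere. You state the full-support hypothesis but never use it; it is needed precisely here, and the theorem can fail without it. With that step filled in, the argument is complete and matches the standard proof the paper relies on by citation.
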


If $\lambda_j = j^{-2s}$, then a ball in the kernel's RKHS is the same as the ellipsoid 
\begin{equation}
    W(s,Q,\{\psi_j\}) = \left\{f\in L^2_{\nu}\mid  f = \sum_{j=1}^{\infty}\theta_j\psi_j, \sum_{j=1}^{\infty}(\theta_j j^s)^2 \leq Q^2\right\}.
\end{equation}
If we consider the two-dimensional tensor product kernel $\tilde{K}:\mathbb{R}^2\times\mathbb{R}^2\rightarrow \mathbb{R}$ constructed from $K$, that is
\begin{equation}
    \tilde{K}(\mathbf{x},\mathbf{z}) = K(\mathbf{x}^{(1)},\mathbf{z}^{(1)})K(\mathbf{x}^{(2)},\mathbf{z}^{(2)})
\end{equation}
where $\mathbf{x}^{(j)}$ is the $j$-th component of $\mathbf{x}\in\mathbb{R}^2$. It is known (\cite{wainwright2019high} Section~12.4.2) that
\begin{equation}
    \tilde{K}(\mathbf{x},\mathbf{z}) = \sum_{j=1}^{\infty}\sum_{k=1}^{\infty}(jk)^{-2s}\psi_j(\mathbf{x}^{(1)})\psi_j(\mathbf{z}^{(1)})\psi_k(\mathbf{x}^{(2)})\psi_k(\mathbf{z}^{(2)})
\end{equation}
and a ball in the RKHS of $\tilde{K}$ takes the form
\begin{equation}
\label{eq:tensorellipsoid}
    \tilde{W} = \left\{f\in L^2_{\nu}\otimes L^2_{\nu}\mid f(x) = \sum_{j,k=1}^{\infty}\theta_{jk}\psi_j\left(\mathbf{x}^{(1)}\right)\psi_k\left(\mathbf{x}^{(2)}\right), \sum_{j,k=1}^{\infty}\left(\theta_{jk} (jk)^s\right)^2 \leq Q^2\right\},
\end{equation}
or equivalently, the eigenvalues are $\prod_{k=1}^2 \left(\mathbf{j}^{(k)}\right)^{-2s}$, $\mathbf{j}\in(\mathbb{N}^+)^2$. Accroding to \eqref{eq:tensorellipsoid}, we would intuitively expect $\theta_{jk}$ to be smaller when the \textit{product} of the index vector is larger. 

When the univariate RKHS is a Sobolev space, estimating with a tensor product kernel is essentially assuming the true regression function is (or can be well-approximated by a function) in the tensor product space of Sobolev space. The latter is also characterized as a Sobolev space with (dominating) mixed derivatives \cite{schmeisser2007recent}. In statistics, such a model has been studied under the name of nonparametric Tensor product ANOVA~\cite{lin2000tensor}. Although methods engaging with hyperbolic cross have been actively studied in numerical analysis in the past decade, there are few works adopting such an idea into statistics. Sobolev spaces with mixed derivatives are not homogeneous spaces in the sense that they contain functions of different smoothness in different directions. Specifically, functions in such spaces can be less smooth along the directions of coordinate axis than other directions. This can be useful in the case when the features as a strong ``main effect" on the outcome and a weaker ``interaction effect" (in the sense of \cite{lin2000tensor}).  

\subsection{Additive model and Sieve-SGD}
In the main text Section~\ref{subsection:multivaraite}, we described the nonparametric additive model and how to use Sieve-SGD to estimate it. We simplified things by omitting the intercept term to streamline exposition. The additive model with intercept is given by
\begin{equation}
\label{eq:additivewithintercept}
    f_{\rho}(\mathbf{x}) = \beta^0 + \sum_{k=1}^p f_{\rho,k}\left(\mathbf{x}^{(k)}\right)
\end{equation}
for some $\beta^0\in\mathbb{R}$ and  $f_{\rho,k}\in W_k(s_k,Q_k,\{\psi_{jk}\})$ for some centered $\psi_{jk}$ $\left(\int \psi_{jk}(x)d\nu(x) = 0\right)$ (for example the functions in \eqref{eq:cosbasis}). For the additive model with intercept \eqref{eq:additivewithintercept}, the updating rule \eqref{submainform} of Sieve-SGD could be replaced by a two-step procedure:
\begin{equation}
\begin{aligned}
    \hat f_{i}
    & = \hat f_{i-1} + \gamma_i\left(Y_i - \hat \beta_{i-1}^0  - \sum_{k=1}^p\hat f_{i-1,k}\left(\mathbf{X}_i^{(k)}\right)\right)\sum_{k=1}^p\sum_{j=1}^{J_{ik}}j^{-2s_k}\psi_{jk}\left(\mathbf{X}_i^{(k)}\right)\psi_{jk}\\
    \hat \beta^0_{i} &= \hat \beta_{i-1}^0 + \gamma_i\left(Y_i - \hat \beta_{i-1}^0  - \sum_{k=1}^p\hat f_{i-1,k}\left(\mathbf{X}_i^{(k)}\right)\right)
\end{aligned}
\end{equation}
here $J_{ik}$ is the truncation level of the $k$-th covariate when the sample size is equal to $i$; and $\hat f_{i-1,k}$ is the estimate of $f_{\rho,k}$. After applying Polyak averaging (averaging $\hat\beta^0_{i} + \hat f_{i}$ with previous estimates), we will get the Sieve-SGD estimate of $f_{\rho}$.

\section{Proof of Lemma~6.2}
\label{app:RKHS}
In this section, we will prove Lemma \ref{maintextlemma}, together with results regarding the spectrum of some related operators. To this end, we need to prepare the reader by reminding them about some established results and ideas in the literature. In this section we will
\begin{itemize}
    \item Define a Reproducing Kernel Hilbert Space (RKHS) formally;
    \item Define covariance operators characterized by a kernel and discuss related geometric properties, and; 
    \item Define the entropy of a compact operator and relate it to the eigenvalues of the operator.
\end{itemize}
After all these, we will be ready to give a proof of Lemma~\ref{maintextlemma}.\\
In this section, we need to distinguish functions (in the RKHS) and their equivalence class (in $L^2$ spaces) for a more rigorous discussion. For a given measure $\mu$ on $\mathcal{X}\subset \mathbb{R}^p$, we use $\mathcal{L}^2_{\mu}$ to denote the Hilbert space of all $\mu$-square integrable functions. The $L^2_{\mu}$ spaces should be understand as the quotient spaces of $\mathcal{L}^2_{\mu}$ under the equivalence relation:
\begin{equation}
    f = g \Leftrightarrow \int_{\mathcal{X}}\left(f(\tau) - g(\tau)\right)^2 d\mu(\tau) = 0.
\end{equation}
For a function $g\in \mathcal{L}^2_{\mu}$, we use $[g] = [g]_{\mu}\in L^2_{\mu}$ to denote its equivalence class. This mathematical framework \cite{steinwart2012mercer} we present here allows the discussion when the measure $\mu$ does not have a full-support over $\mathcal{X}$ (which is weaker than our Assumption A2), or when the RKHS is not dense in $\mathcal{L}^2_{\mu}$. 

\subsection{Mercer kernel and RKHS} We first introduce the definition of a Mercer kernel and its corresponding RKHS.
\begin{definition}[Mercer kernel]
A symmetric bivariate function $K:\mathcal{X}\times\mathcal{X}\rightarrow \mathbb{R}$ is positive semi-definite (PSD) if for any $n\geq 1$ and $(x_i)_{i=1}^n\subset\mathcal{X}$, the $n \times n$ matrix $\mathbb{K}$ whose elements are $\mathbb{K}_{ij}= K(x_i,x_j)$ is always a PSD matrix.

A continuous, bounded, PSD kernel function $K$ is called a \emph{Mercer kernel}.
\end{definition}
In Assumption A3 of the main manuscript, we assumed $\{\psi_j\}$ is a set of bounded, continuous functions in $\mathcal{L}^2_{\nu}$. 
For each $J\in \mathbb{N}^+ \cup\{\infty\}$ and $\omega > 0.5$, we can show that the bivariate functions 
\begin{equation}
\label{eq:truncatedkernel}
  K_{J}^{\omega}(s,t) := \sum_{j=1}^{J}j^{-2\omega} \psi_j(s)\psi_j(t),\quad 
\end{equation}
are Mercer kernels. We also use $K(s,t) = K_{\infty}^{s}(s,t)$ to denote the canonical (untruncated) kernel in our analysis.

It is well-known that for any Mercer kernel, there is a unique associated Hilbert space $(\mathcal{H}_K,\langle\cdot,\cdot\rangle_{K})$ which has the so-called reproducing property. The following theorem formally defines such a Hilbert space and states its uniqueness.

\begin{theorem}[\cite{cucker2002mathematical}]
\label{RKHSdefines}
For a Mercer Kernel $k:\mathbb{X}\times\mathbb{X}\rightarrow\mathbb{R}$, there exists an unique Hilbert Space $(\mathcal{H}_k,\langle \cdot,\cdot \rangle_{k})$ of functions on $\mathbb{X}$ satisfying the following conditions. Let $k_x:z\mapsto k(x,z)$:
\begin{enumerate}
    \item For all $x\in \mathbb{X}$, $k_x\in \mathcal{H}_k$.
    \item The linear span of $\{k_x\ |\ x\in \mathbb{X}\}$ is dense (w.r.t $\|\cdot\|_{k}$) in $\mathcal{H}_k$
    \item For all $f\in \mathcal{H}_k,x\in\mathbb{X}$, $f(x) = \langle f,k_x\rangle_{k}$
\end{enumerate}
We call this Hilbert space the \emph{Reproducing kernel Hilbert space (RKHS)} associated with kernel $k$ .
\end{theorem}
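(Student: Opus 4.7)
The plan is to prove Theorem~\ref{RKHSdefines} by an explicit Moore--Aronszajn style construction, first producing a candidate Hilbert space and then establishing uniqueness. First I would define the pre-Hilbert space
\begin{equation*}
\mathcal{H}_0 := \operatorname{span}\{k_x : x \in \mathcal{X}\},
\end{equation*}
and equip it with the bilinear form
\begin{equation*}
\left\langle \sum_{i=1}^m a_i k_{x_i},\ \sum_{j=1}^n b_j k_{y_j} \right\rangle_k := \sum_{i,j} a_i b_j\, k(x_i, y_j).
\end{equation*}
I would first check this is well-defined (independent of representation) by using the reproducing identity $\sum_j b_j k(x,y_j) = (\sum_j b_j k_{y_j})(x)$, and then verify it is a genuine inner product. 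Symmetry and bilinearity are immediate from the symmetry of $k$; positive semi-definiteness is exactly the PSD assumption on the kernel. Definiteness requires a small Cauchy--Schwarz argument: for $f = \sum_i a_i k_{x_i}$, the reproducing-type identity $\langle f, k_x\rangle_k = f(x)$ combined with $|\langle f, k_x\rangle_k|^2 \leq \langle f,f\rangle_k\, k(x,x)$ shows that $\langle f,f\rangle_k = 0$ forces $f \equiv 0$ pointwise.

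Next I would form the completion $\mathcal{H}_k$ of $(\mathcal{H}_0, \langle \cdot,\cdot\rangle_k)$ in the usual abstract sense (equivalence classes of Cauchy sequences). The central task is to identify elements of $\mathcal{H}_k$ with actual functions on $\mathcal{X}$, and this is the main obstacle in the construction. The trick is the pointwise-evaluation inequality
\begin{equation*}
|f_n(x) - f_m(x)|^2 = |\langle f_n - f_m, k_x\rangle_k|^2 \leq \|f_n - f_m\|_k^2 \cdot k(x,x),
\end{equation*}
which shows that any Cauchy sequence $\{f_n\} \subset \mathcal{H}_0$ converges pointwise on $\mathcal{X}$; the limit $f(x) := \lim_n f_n(x)$ is well-defined and independent of the Cauchy sequence chosen within an equivalence class. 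With this identification, property (1) holds by construction, property (2) is literally the density of $\mathcal{H}_0$ in its completion, and property (3) extends from $\mathcal{H}_0$ to $\mathcal{H}_k$ by continuity of the inner product (since $k_x \in \mathcal{H}_0$ is fixed while taking limits).

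Finally I would handle uniqueness. Suppose $(\widetilde{\mathcal{H}}, \langle\cdot,\cdot\rangle_{\widetilde{\mathcal{H}}})$ is any Hilbert space of functions on $\mathcal{X}$ satisfying (1)--(3). Property (1) gives $k_x \in \widetilde{\mathcal{H}}$ for all $x$, and property (3) forces $\langle k_x, k_y\rangle_{\widetilde{\mathcal{H}}} = k_y(x) = k(x,y)$, so the inner product on $\operatorname{span}\{k_x\}$ must coincide with the one defined above. Property (2) says this span is dense in $\widetilde{\mathcal{H}}$, so both Hilbert spaces are completions of the same pre-Hilbert space under the same inner product, hence canonically isometrically isomorphic via the identity on $\mathcal{H}_0$. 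Because the pointwise evaluation on each element is pinned down by the reproducing identity, the two spaces actually agree as spaces of functions, not merely up to isometry, yielding uniqueness. The entire proof is standard Moore--Aronszajn material; the only place requiring real care is promoting abstract Cauchy-sequence equivalence classes to bona fide pointwise-defined functions, which is precisely where the boundedness of the evaluation functional (built into the reproducing property) does the heavy lifting.
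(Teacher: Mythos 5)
Your construction is the standard Moore--Aronszajn argument, which is exactly the route taken by the cited source (Cucker and Smale); the paper itself states this theorem as a citation and gives no proof of its own. The proposal is correct — the only step left implicit is that the identification of abstract Cauchy-sequence classes with pointwise limit functions is injective (if $\lim_n f_n(x)=0$ for all $x$ then $\langle f,k_x\rangle_k=0$ for all $x$, and density of $\operatorname{span}\{k_x\}$ forces $f=0$ in the completion), which is a one-line addition.
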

Note that in the above definition, we did not mention any measures on $\mathbb{X}$. The RKHS $\mathcal{H}_{k}$ of a kernel $k$ is defined independently as a Hilbert space (a complete linear space equipped with an inner product).

The inner product in Theorem~\ref{RKHSdefines} is implicitly defined and appear to be quite abstract. However, there is an equivalent definition of the RKHS corresponding to $K$ in which our Sobolev ellipsoid assumption appears:
\begin{theorem}[p.37,Theorem 4 in \cite{cucker2002mathematical}]
\label{theorem:equivalentdef}
The Hilbert space defined in Theorem~\ref{RKHSdefines} is identical (same function class with the same inner product) to the following Hilbert space $\mathbb{H}_K$.
\begin{equation}
    \mathbb{H}_K=\left\{f \in \mathcal{L}_{\nu}^{2}(X) \mid f=\sum_{j=1}^{\infty} a_{j} \psi_{j} \quad \text { with } \quad \sum_{j=1}^{\infty} \left(j^sa_j \right)^2 < \infty\right\}
\end{equation}
Equipped with the inner product:
\begin{equation}
    \langle f, g\rangle_{K}=\sum_{j=1}^{\infty} j^{2s}a_j b_j
\end{equation}
for $f=\sum a_{j} \psi_{j}, \text{ and }g=\sum b_{j} \psi_{j}$.
\end{theorem}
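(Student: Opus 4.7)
The plan is to invoke the uniqueness clause of Theorem~\ref{RKHSdefines}: it suffices to verify that $\mathbb{H}_K$ (with the stated inner product) is a Hilbert space of functions on $\mathcal{X}$ that satisfies the three axioms listed there, and then identify it with $\mathcal{H}_K$. The whole argument decomposes into four short verifications plus one ``identification'' step at the end, and none of the calculations are deep once the setup is in place.

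First I would establish that $\mathbb{H}_K$ is a genuine Hilbert space by exhibiting the linear map $\Phi:f\mapsto (j^{s}a_j)_{j\geq 1}$ as an isometry onto $\ell^2$; completeness of $\mathbb{H}_K$ is inherited from $\ell^2$. Second, and this is the subtle point, I would show that every $f=\sum a_j\psi_j\in\mathbb{H}_K$ has an honest pointwise representative, so that $\mathbb{H}_K$ can be regarded as a space of functions rather than of $\nu$-equivalence classes. This follows from Cauchy--Schwarz:
\begin{equation}
\sum_{j=1}^{\infty}|a_j\psi_j(x)|\leq \Bigl(\sum_{j=1}^{\infty}j^{2s}a_j^2\Bigr)^{1/2}\Bigl(\sum_{j=1}^{\infty}j^{-2s}\psi_j(x)^2\Bigr)^{1/2},
\end{equation}
where the second factor is bounded by $M\sqrt{\zeta(2s)}$ uniformly in $x$ by assumption A3 and the condition $s>1/2$. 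So the series converges absolutely and uniformly in $x$, giving a canonical continuous representative.

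Next I would verify the three axioms. Axiom (1) (kernel sections belong to the space): since $K_x=\sum_j j^{-2s}\psi_j(x)\psi_j$ has coefficients $a_j=j^{-2s}\psi_j(x)$, we get $\sum(j^s a_j)^2=\sum j^{-2s}\psi_j(x)^2\leq M^2\zeta(2s)<\infty$. Axiom (3) (reproducing property): for any $f=\sum a_j\psi_j\in\mathbb{H}_K$,
\begin{equation}
\langle f,K_x\rangle_{\mathbb{H}_K}=\sum_{j=1}^{\infty}j^{2s}a_j\bigl(j^{-2s}\psi_j(x)\bigr)=\sum_{j=1}^{\infty}a_j\psi_j(x)=f(x),
\end{equation}
the last equality using the pointwise convergence established above. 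Axiom (2) (density of $\operatorname{span}\{K_x:x\in\mathcal{X}\}$): any $f\perp K_x$ for all $x$ satisfies $f(x)=\langle f,K_x\rangle=0$ pointwise; then since $\{\psi_j\}$ is an orthonormal basis of $L^2_\nu$ and $f$'s pointwise representative vanishes identically, all coefficients $a_j$ vanish, so $f=0$ in $\mathbb{H}_K$. Having checked (1)--(3), the uniqueness statement in Theorem~\ref{RKHSdefines} gives $\mathbb{H}_K=\mathcal{H}_K$ as Hilbert spaces.

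The main obstacle I anticipate is the bookkeeping between $\mathcal{L}^2_\nu$ and $L^2_\nu$, since $\mathbb{H}_K$ is a priori described as a subset of $\mathcal{L}^2_\nu$ whereas the reproducing property requires honest pointwise evaluation. The uniform Cauchy--Schwarz bound above is what lets us promote an $L^2_\nu$-equivalence class to a distinguished continuous representative and legitimize the calculations in axioms (1)--(3). A secondary bit of care is needed in the density step: one must argue from $f\equiv 0$ pointwise to $a_j=0$, which uses the orthonormality of $\{\psi_j\}$ in $L^2_\nu$ together with the fact that the pointwise and $L^2_\nu$ expansions of $f$ agree. Once these issues are handled, the remaining checks are routine.
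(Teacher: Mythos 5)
Your proposal is correct. The paper itself does not prove this statement---it is quoted directly from Cucker and Smale---but your argument (verify that $\mathbb{H}_K$ is complete via the isometry with $\ell^2$, use the uniform bound $\sum_j j^{-2s}\psi_j(x)^2\leq M^2\zeta(2s)$ to obtain a continuous pointwise representative, check the three defining axioms, and invoke the uniqueness clause of Theorem~\ref{RKHSdefines}) is the standard proof of that cited result, and all the steps, including the careful passage from $\nu$-equivalence classes to pointwise representatives, are sound under assumption A3.
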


It is direct to check our assumption A3 is the same as assuming the conditional mean $f_{\rho}$ belongs to a ball of radius $Q$ in the above constructed RKHS.

In this section, we consider the RKHSs related to kernels $K_J$ with $\mathbb{X} = \mathcal{X}$, where $\mathcal{X}$ is the support of $\nu$. Under our assumption A2 and A3, the functions in $\mathcal{H}_{K}$ are all square-integrable.
We also define the identity mapping (w.r.t. measure $\mu$) $\text{id}_{\mu}$ as
\begin{equation}
\begin{aligned}
        \text{id}_{\mu}:\mathcal{H}_K & \rightarrow L^2_{\mu}\\
        g&\mapsto [g]_{\mu}.
\end{aligned}
\end{equation}

\subsection{Covariance operators}
\label{app:covariance}
In Section~\ref{app:covariance} and \ref{section:entropy and spectrum}, we engage with the RKHS of the canonical kernel $K$. Once the properties of its related operators are clear, we can directly generate our analysis techniques to other truncated kernels $K_{J}^{\omega}$. Recall our definitions of $T_X$ and $T_{\nu}$ in the main text:
\begin{equation} 
\begin{aligned}
T_{X} : L_{\rho_{X}}^{2} & \rightarrow  L_{\rho_{X}}^{2} \\ 
g & \mapsto \int_{\mathcal{X}} g(\tau) K(\tau,\cdot), d\rho_X(\tau) 
\end{aligned}
\end{equation}
and 
\begin{equation} 
\begin{aligned}
T_{\nu} : L^{2}_{\nu} & \rightarrow  L^{2}_{\nu} \\ 
g & \mapsto \int g(\tau) K(\tau,\cdot) d\nu(\tau).
\end{aligned}
\end{equation}

Now we state several basic properties of $T_X$. Similar properties also hold for $T_{\nu}$ and can be verified much easier without abstract analysis. For proofs of Lemma~\ref{lemma:easyprop} and more properties, see \cite[Section~2 \& 3]{steinwart2012mercer}.
\begin{lemma}
\label{lemma:easyprop}
Under Assumptions A2, A3 in the main text:
\begin{itemize}
    \item The operator $T_X$ is bounded, self-adjoint, positive.
    \item There exists a at most countable set of functions $\{\phi_j\}\subset \mathcal{H}_K$, $j\in\mathcal{J}$ and a at most countable sequence of positive numbers $\lambda_j$ (decreasingly ordered) such that
    \begin{equation}
        T_{X}(g) = \sum_{j\in \mathcal{J}}\lambda_{j}\langle g,\left[\phi_j\right]_{\rho_X}\rangle_{L^{2}_{\rho_X}}\left[\phi_j\right]_{\rho_X},\quad \text{for }g\in L^2_{\rho_X}.
    \end{equation}
    \item The $\{[\phi_j]_{\rho_X}\}$ above is an orthonormal system in $L^2_{\rho_X}$ and $\{\sqrt{\lambda_j}\phi_j\}$ is an orthonormal system in $\mathcal{H}_K$. Therefore, $(\lambda_j,[\phi]_j)$ is an eigensystem of $T_X$:
    \begin{equation}
        T_X([\phi_j]_{\rho_X}) = \lambda_j [\phi_j]_{\rho_X}
    \end{equation}
    \item $T_X$ is a trace class operator, i.e. $\sum_j \lambda_j <\infty$.
\end{itemize}
\end{lemma}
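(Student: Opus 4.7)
The plan is to realize $T_X$ as a composition $S_K \circ S_K^*$ where $S_K:\mathcal{H}_K \to L^2_{\rho_X}$ is the natural inclusion of the RKHS into the squared-integrable functions; once that factorization is in hand, boundedness, self-adjointness, positivity, and the later structural claims all fall out from standard operator theory applied to compact positive operators on a separable Hilbert space.

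First I would collect some preliminary facts. Under A3, the series defining $K(s,t) = \sum_j j^{-2s}\psi_j(s)\psi_j(t)$ converges absolutely and uniformly since $\|\psi_j\|_\infty \le M$ and $s>1$, and in particular $\sup_{x\in\mathcal{X}} K(x,x) \le M^2 \zeta(2s) < \infty$. By the reproducing property $|f(x)| = |\langle f,K_x\rangle_K| \le \|f\|_K \sqrt{K(x,x)}$, so every $f \in \mathcal{H}_K$ is uniformly bounded and (since $\rho_X$ is a probability measure) lies in $L^2_{\rho_X}$. Thus $S_K: g \mapsto [g]_{\rho_X}$ is a bounded linear map from $\mathcal{H}_K$ to $L^2_{\rho_X}$. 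A direct calculation, using the reproducing property on $S_K K_x$, identifies the adjoint $S_K^*: L^2_{\rho_X} \to \mathcal{H}_K$ as $(S_K^* g)(x) = \int g(\tau) K(x,\tau)\, d\rho_X(\tau)$, so $T_X = S_K \circ S_K^*$. From this factorization, boundedness, self-adjointness ($(SS^*)^* = SS^*$), and positivity ($\langle SS^* g, g\rangle_{L^2_{\rho_X}} = \|S^*g\|_K^2 \ge 0$) are immediate.

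Next I would argue compactness by showing $T_X$ is Hilbert--Schmidt: since $K$ is bounded and $\rho_X$ is a probability measure, $K \in L^2(\rho_X \otimes \rho_X)$, and the Hilbert--Schmidt norm of the integral operator with kernel $K$ is finite. With $T_X$ compact, self-adjoint, and positive on the separable Hilbert space $L^2_{\rho_X}$, the spectral theorem delivers an at most countable orthonormal system $\{[\phi_j]_{\rho_X}\}_{j\in\mathcal{J}}$ in $L^2_{\rho_X}$ and corresponding positive eigenvalues $\lambda_j$ (indexed in decreasing order), and the series representation of $T_X$ stated in the lemma holds. To lift each eigenfunction into $\mathcal{H}_K$, I would observe that since $\lambda_j > 0$ and $\lambda_j [\phi_j]_{\rho_X} = T_X [\phi_j]_{\rho_X} = S_K(S_K^* [\phi_j]_{\rho_X})$, the canonical representative $\phi_j := \lambda_j^{-1} S_K^* [\phi_j]_{\rho_X}$ lies in $\mathcal{H}_K$ and satisfies $S_K \phi_j = [\phi_j]_{\rho_X}$. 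Orthonormality of $\{\sqrt{\lambda_j}\phi_j\}$ in $\mathcal{H}_K$ then follows from a short calculation: $\langle \phi_j, \phi_k\rangle_K = \lambda_j^{-1}\lambda_k^{-1}\langle S_K^*[\phi_j]_{\rho_X}, S_K^*[\phi_k]_{\rho_X}\rangle_K = \lambda_j^{-1}\lambda_k^{-1}\langle T_X [\phi_j]_{\rho_X}, [\phi_k]_{\rho_X}\rangle_{L^2_{\rho_X}} = \lambda_j^{-1}\delta_{jk}$.

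Finally, for the trace-class claim I would invoke the standard Mercer-type identity that for a positive integral operator with continuous kernel $K$, $\operatorname{tr}(T_X) = \int K(x,x)\, d\rho_X(x) \le \sup_x K(x,x) \le M^2\zeta(2s) < \infty$, giving $\sum_j \lambda_j < \infty$. I expect the main obstacle to be purely bookkeeping rather than conceptual, namely maintaining a clean distinction between a pointwise-defined function $\phi_j \in \mathcal{H}_K$ and its $\rho_X$-equivalence class $[\phi_j]_{\rho_X} \in L^2_{\rho_X}$; because $\rho_X$ need not have full support relative to $\nu$, the lift from the $L^2_{\rho_X}$-eigenfunction to a genuine $\mathcal{H}_K$-representative is nontrivial and the device that makes it canonical is precisely the factorization $T_X = S_K S_K^*$ together with the explicit formula $\phi_j = \lambda_j^{-1} S_K^* [\phi_j]_{\rho_X}$.
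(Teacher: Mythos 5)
Your proof is correct and takes essentially the same route as the paper's source: the paper does not prove Lemma~\ref{lemma:easyprop} itself but defers to \cite[Sections~2--3]{steinwart2012mercer}, whose argument is precisely the factorization $T_X = S_K S_K^*$ through the bounded (indeed Hilbert--Schmidt) inclusion $S_K:\mathcal{H}_K\rightarrow L^2_{\rho_X}$, the spectral theorem for compact positive self-adjoint operators, and the canonical lift $\phi_j=\lambda_j^{-1}S_K^*[\phi_j]_{\rho_X}$. Your trace-class step could be made fully self-contained by writing $\operatorname{tr}(T_X)=\operatorname{tr}(S_K^*S_K)=\sum_i\|S_K e_i\|_{L^2_{\rho_X}}^2=\int K(x,x)\,d\rho_X(x)\leq M^2\zeta(2s)$ for any orthonormal basis $\{e_i\}$ of $\mathcal{H}_K$, rather than invoking a Mercer-type identity, but as written it is sound.
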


Under the assumptions A2, A3, we can actually say more about the properties of $\phi_j$. But we list them as a separate lemma since they are not necessarily true when we discuss truncated kernels later.

\begin{lemma}[Theorem~3.1, \cite{steinwart2012mercer}]
\label{lemma:completecondition}
Under the same assumptions as in Lemma~\ref{lemma:easyprop}. Let $\phi_j$ and $[\phi_j]_{\rho_X}$ denote the orthonormal systems in Lemma~\ref{lemma:easyprop}. Then
\begin{itemize}
    \item The family $[\phi_j]_{\rho_X}$ is an orthonormal basis of $L^2_{\rho_X}$.
    \item The family $\sqrt{\lambda_j}\phi_j$ is an orthonormal basis of $\mathcal{H}_K$.
\end{itemize}
\end{lemma}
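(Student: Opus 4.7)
The plan is to deduce both completeness claims from the injectivity of the relevant compact self-adjoint operators, appealing to the spectral theorem for compact self-adjoint operators (which says the eigenvectors corresponding to nonzero eigenvalues span the orthogonal complement of the null space).

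For the first claim, that $\{[\phi_j]_{\rho_X}\}$ is an orthonormal basis of $L^2_{\rho_X}$: by Lemma~\ref{lemma:easyprop}, $T_X$ is compact, self-adjoint, and positive on $L^2_{\rho_X}$ with eigensystem $(\lambda_j,[\phi_j]_{\rho_X})$. By the spectral theorem, the closure of the span of $\{[\phi_j]_{\rho_X}\}$ equals $(\ker T_X)^{\perp}$, so it suffices to show $\ker T_X = \{0\}$. Suppose $T_X g = 0$ for some $g \in L^2_{\rho_X}$. Then $\int g(\tau) K(\tau,x)\,d\rho_X(\tau) = 0$ for $\rho_X$-a.e.\ $x$, i.e.\ $\langle g, [K_x]_{\rho_X}\rangle_{L^2_{\rho_X}} = 0$ for all $x$ in $\mathrm{supp}(\rho_X)$ (using continuity of $K$). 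Under A2, $\mathrm{supp}(\rho_X) = \mathcal{X}$, so $g$ is $L^2_{\rho_X}$-orthogonal to the set $\{[K_x]_{\rho_X} : x \in \mathcal{X}\}$, and hence to all of $[\mathcal{H}_K]_{\rho_X}$ (since finite linear combinations of the $K_x$ are dense in $\mathcal{H}_K$, and the inclusion $\mathcal{H}_K \hookrightarrow L^2_{\rho_X}$ is bounded because $K$ is bounded). It therefore remains to show $[\mathcal{H}_K]_{\rho_X}$ is dense in $L^2_{\rho_X}$: by A3, every finite linear combination $\sum_{j=1}^N c_j \psi_j$ lies in $\mathcal{H}_K$, and such combinations are dense in $L^2_\nu$; by A2, the identity map $L^2_\nu \to L^2_{\rho_X}$ is a bounded bijection with bounded inverse (since $\ell \leq p_X \leq u$), so density transfers from $L^2_\nu$ to $L^2_{\rho_X}$.

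For the second claim, that $\{\sqrt{\lambda_j}\phi_j\}$ is an orthonormal basis of $\mathcal{H}_K$: introduce the bounded inclusion operator $S: \mathcal{H}_K \to L^2_{\rho_X}$, $h \mapsto [h]_{\rho_X}$. A direct computation using the reproducing property shows its adjoint is $S^*: L^2_{\rho_X} \to \mathcal{H}_K$, $g \mapsto \int g(\tau) K(\tau,\cdot)\,d\rho_X(\tau)$, so that $T_X = SS^*$ and the operator $S^*S$ is a compact, positive, self-adjoint operator on $\mathcal{H}_K$ with the same nonzero eigenvalues as $T_X$. For any eigenfunction $\phi_j \in \mathcal{H}_K$, we have $(S^*S)\phi_j = S^*[\phi_j]_{\rho_X} = \lambda_j \phi_j$, so $\phi_j$ is an eigenfunction of $S^*S$ with eigenvalue $\lambda_j$; the $\mathcal{H}_K$-orthonormality of $\{\sqrt{\lambda_j}\phi_j\}$ was already recorded in Lemma~\ref{lemma:easyprop}. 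By the spectral theorem applied to $S^*S$ on $\mathcal{H}_K$, it suffices to show $\ker(S^*S) = \{0\}$. If $(S^*S)h = 0$ then $\|Sh\|^2_{L^2_{\rho_X}} = \langle S^*Sh, h\rangle_{\mathcal{H}_K} = 0$, so $h = 0$ $\rho_X$-a.e. Since $\mathcal{H}_K$ elements are continuous (kernel $K$ is continuous) and $\mathrm{supp}(\rho_X) = \mathcal{X}$ under A2, this forces $h \equiv 0$ on $\mathcal{X}$, hence $h = 0$ in $\mathcal{H}_K$.

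The main obstacle is keeping the bookkeeping clean between the RKHS (where elements are genuine continuous functions) and the $L^2$-spaces (where elements are equivalence classes), and in particular verifying that $S^*$ as defined above is indeed the adjoint of the inclusion $S$ — this uses Fubini together with the reproducing property $h(\tau) = \langle h, K_\tau\rangle_K$. Once the operator calculus $T_X = SS^*$ is set up, both completeness assertions reduce to the same geometric input: the density of $\mathcal{H}_K$ in $L^2_{\rho_X}$ (for injectivity of $T_X$) and the full support of $\rho_X$ together with continuity of RKHS elements (for injectivity of $S^*S$), and both density and full-support statements come directly from the density lower bound in A2 and the basis property in A3.
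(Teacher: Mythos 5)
The paper itself does not prove this lemma: it is imported verbatim as Theorem~3.1 of the cited Steinwart--Scovel paper, so the relevant comparison is with the standard argument behind that citation, which your write-up essentially reconstructs. Your proof is correct in its main ingredients and uses exactly the right inputs: the factorization $T_X=SS^*$ through the embedding $S:\mathcal{H}_K\to L^2_{\rho_X}$, the density of $\operatorname{span}\{\psi_j\}\subset\mathcal{H}_K$ in $L^2_\nu$ and hence (by the two-sided density bound in A2, which makes the $L^2_\nu$ and $L^2_{\rho_X}$ norms equivalent) in $L^2_{\rho_X}$, giving $\ker T_X=\{0\}$; and the full support of $\rho_X$ together with continuity of RKHS elements, giving injectivity of $S$ and hence $\ker(S^*S)=\{0\}$. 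The diagonalization in Lemma~\ref{lemma:easyprop} then identifies the closed span of $\{[\phi_j]_{\rho_X}\}$ with $(\ker T_X)^{\perp}$, so the first bullet is complete as written.

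One step in the second bullet needs an extra sentence. Showing that each $\phi_j$ is an eigenvector of $S^*S$ and that $\ker(S^*S)=\{0\}$ does not by itself give completeness of $\{\sqrt{\lambda_j}\phi_j\}$: the spectral theorem says the \emph{full} nonzero-eigenvalue eigenspaces of $S^*S$ span $(\ker S^*S)^{\perp}=\mathcal{H}_K$, so you must also check that these eigenspaces are exhausted by $\operatorname{span}\{\phi_j\}$. This is standard and one line: if $S^*Sh=\lambda h$ with $\lambda\neq 0$, then $T_X(Sh)=\lambda\,Sh$ with $Sh\neq 0$, so $Sh\in\operatorname{span}\{[\phi_j]_{\rho_X}:\lambda_j=\lambda\}$ by the diagonalization of $T_X$ in Lemma~\ref{lemma:easyprop}, whence $h=\lambda^{-1}S^*(Sh)\in\operatorname{span}\{\phi_j:\lambda_j=\lambda\}$ (using $S^*[\phi_j]_{\rho_X}=\lambda_j\phi_j$, i.e.\ that $\phi_j$ is the $\mathcal{H}_K$-representative produced by $S^*$, which is the convention underlying Lemma~\ref{lemma:easyprop}). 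Alternatively, once the first bullet is proved you can bypass the spectral theorem on $\mathcal{H}_K$ entirely: if $\langle h,\phi_j\rangle_K=0$ for all $j$, then $\langle [h]_{\rho_X},[\phi_j]_{\rho_X}\rangle_{L^2_{\rho_X}}=\lambda_j\langle h,\phi_j\rangle_K=0$, so $[h]_{\rho_X}=0$ by the first bullet, and continuity plus full support force $h\equiv 0$. With either repair the argument is complete and matches the cited result.
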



Now we define several operators related to $T_{X}$ that will facilitate our analysis of the spectrum of it.
\begin{definition}
\label{def:SX}
Under the same assumptions as in Lemma~\ref{lemma:easyprop}, with the same $\{\lambda_j\}$ and $\{\phi_j\}$:
\begin{itemize}
    \item We define the $r$-th power of $T_X$ as:
    \begin{equation}
    \begin{aligned}
         T^{r}_X:L^2_{\rho_X}&\rightarrow L^2_{\rho_X}\\
         g &\mapsto \sum_j \lambda_j^r \langle g,\left[\phi_j\right]_{\rho_X}\rangle_{L^{2}_{\rho_X}}\left[\phi_j\right]_{\rho_X},\quad \text{for }g\in L^2_{\rho_X},
    \end{aligned}
    \end{equation}
    in this work, we are most interested in the square-root of $T_X$, i.e. $T_{X}^{1/2}$.
    \item Define the operator $S_X^{1/2}$ as:
    \begin{equation}
    \begin{aligned}
    \label{eq:S_X^1/2}
         S^{1/2}_X:L^2_{\rho_X}&\rightarrow \mathcal{H}_K\\
         g &\mapsto \sum_j \lambda_j^{1/2} \langle g,\left[\phi_j\right]_{\rho_X}\rangle_{L^{2}_{\rho_X}}\phi_j,\quad \text{for }g\in L^2_{\rho_X}
    \end{aligned}
    \end{equation}
\end{itemize}
\end{definition}

The operator $S_X^{1/2}$ has a very importance geometric properties: it preserves distance between two subspaces of $L^2_{\rho_X}$ and $\mathcal{H}_K$, as stated in the following lemma.
\begin{lemma}[\cite{steinwart2012mercer},Theorem~2.11]
\label{lemma:isosubspaces}
Under the same assumptions as in Lemma~\ref{lemma:easyprop}, let $S_X^{1/2}$ be the operator defined in \eqref{eq:S_X^1/2}. Then 
\begin{itemize}
    \item $S_{X}^{1/2}$ is bijective between $\overline{\text{span}\{ [\phi_j]_{\rho_X}, j\in \mathcal{J}\}} =  L^2_{\rho_X}$ and $\overline{\text{span}\{ \phi_j, j\in \mathcal{J}\}}  =  \mathcal{H}_K$ and,
    \item $\|S_X^{1/2}(g)\|_K = \|g\|_{L^2_{\rho_X}}$, for $g\in \overline{\text{span}\{ [\phi_j]_{\rho_X}, j\in \mathcal{J}\}}$.
\end{itemize}
That is, $S_{X}^{1/2}$ is an isometric isomorphism between $L^2_{\rho_X}$ and $\mathcal{H}_K$.
\end{lemma}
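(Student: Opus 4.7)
\textbf{Proof proposal for Lemma~\ref{lemma:isosubspaces}.}

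The plan is to exploit the orthonormal basis structure guaranteed by Lemma~\ref{lemma:easyprop} and Lemma~\ref{lemma:completecondition}, and to verify isometry by a direct Parseval computation, after which bijectivity becomes essentially automatic.

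First, I would expand an arbitrary $g \in L^2_{\rho_X}$ with respect to the orthonormal basis $\{[\phi_j]_{\rho_X}\}_{j \in \mathcal{J}}$ guaranteed by Lemma~\ref{lemma:completecondition}: write $g = \sum_j c_j [\phi_j]_{\rho_X}$ with $c_j = \langle g, [\phi_j]_{\rho_X}\rangle_{L^2_{\rho_X}}$ and $\sum_j c_j^2 = \|g\|_{L^2_{\rho_X}}^2 < \infty$. Plugging into the definition \eqref{eq:S_X^1/2} gives
\begin{equation}
S_X^{1/2}(g) \;=\; \sum_{j \in \mathcal{J}} \lambda_j^{1/2} c_j \, \phi_j \;=\; \sum_{j \in \mathcal{J}} c_j \,\bigl(\sqrt{\lambda_j}\,\phi_j\bigr).
\end{equation}
Because $\{\sqrt{\lambda_j}\phi_j\}_{j \in \mathcal{J}}$ is an orthonormal system in $\mathcal{H}_K$ (Lemma~\ref{lemma:easyprop}), this series is a sum of a square-summable sequence of coefficients against an orthonormal family, so it converges unconditionally in $\mathcal{H}_K$, and by Parseval's identity in the Hilbert space $\mathcal{H}_K$,
\begin{equation}
\bigl\|S_X^{1/2}(g)\bigr\|_K^2 \;=\; \sum_{j \in \mathcal{J}} c_j^2 \;=\; \|g\|_{L^2_{\rho_X}}^2.
\end{equation}
This establishes the isometry claim on all of $L^2_{\rho_X}$, which by Lemma~\ref{lemma:completecondition} equals $\overline{\operatorname{span}\{[\phi_j]_{\rho_X}\}}$.

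For bijectivity, injectivity is immediate from the isometry: $S_X^{1/2}(g) = 0$ forces $\|g\|_{L^2_{\rho_X}} = 0$, hence $g = 0$ in $L^2_{\rho_X}$. For surjectivity, I would use the other half of Lemma~\ref{lemma:completecondition}, namely that $\{\sqrt{\lambda_j}\phi_j\}_{j \in \mathcal{J}}$ is an orthonormal \emph{basis} of $\mathcal{H}_K$. Given any $h \in \mathcal{H}_K$, expand $h = \sum_j d_j (\sqrt{\lambda_j}\,\phi_j)$ with $\sum_j d_j^2 < \infty$, and set $g = \sum_j d_j [\phi_j]_{\rho_X} \in L^2_{\rho_X}$. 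Then the computation above shows $S_X^{1/2}(g) = h$, so $S_X^{1/2}$ maps onto $\mathcal{H}_K = \overline{\operatorname{span}\{\phi_j\}}$.

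The only real subtlety is ensuring that the two series representations are consistent when $\mathcal{J}$ is countably infinite and when some $\lambda_j$ may be very small, so that the interchange of the expansion of $g$ and the definition of $S_X^{1/2}$ is legitimate. This reduces, however, to the standard fact that a bounded linear operator on a Hilbert space commutes with convergent orthonormal series expansions, combined with the uniform bound $\|\sqrt{\lambda_j}\phi_j\|_K = 1$ coming from Lemma~\ref{lemma:easyprop}. I expect this to be the only mildly technical step; everything else follows directly from the orthonormal basis properties already handed to us by Lemmas~\ref{lemma:easyprop} and~\ref{lemma:completecondition}.
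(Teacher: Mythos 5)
Your proof is correct. Note first that the paper does not actually prove this lemma: it is imported wholesale as Theorem~2.11 of the cited Steinwart--Scovel reference, so there is no in-paper argument to compare against. Your derivation is a clean, self-contained verification from the two preceding lemmas, and every step checks out: expanding $g=\sum_j c_j[\phi_j]_{\rho_X}$ in the orthonormal basis of $L^2_{\rho_X}$ (Lemma~\ref{lemma:completecondition}), the definition of $S_X^{1/2}$ immediately yields $S_X^{1/2}(g)=\sum_j c_j\bigl(\sqrt{\lambda_j}\phi_j\bigr)$, and since $\{\sqrt{\lambda_j}\phi_j\}$ is an orthonormal system in $\mathcal{H}_K$ (Lemma~\ref{lemma:easyprop}) the series converges and Parseval gives the isometry; injectivity then follows from the isometry and surjectivity from $\{\sqrt{\lambda_j}\phi_j\}$ being a full basis of $\mathcal{H}_K$ (Lemma~\ref{lemma:completecondition} again). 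One small remark: the "only real subtlety" you flag at the end is in fact a non-issue. There is no operator--series interchange to justify, because the coefficients $\lambda_j^{1/2}\langle g,[\phi_j]_{\rho_X}\rangle_{L^2_{\rho_X}}$ appearing in the definition \eqref{eq:S_X^1/2} are already exactly $c_j$ times the unit vectors $\sqrt{\lambda_j}\phi_j$; the only convergence fact needed is that a square-summable coefficient sequence against an orthonormal family sums in a Hilbert space, which you have already invoked. The one caveat worth keeping in mind is that your argument leans on Lemma~\ref{lemma:completecondition} (i.e.\ on assumptions A2--A3 making both families complete); the general Steinwart--Scovel statement handles the case where $\{[\phi_j]_{\rho_X}\}$ spans only a proper closed subspace, which is exactly the situation the paper later needs for the truncated kernels in Lemma~\ref{lemma:generalisosubspaces}, where the isometry holds only between the indicated closed spans.
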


It is direct to check the following lemma by the equivalent definition of the RKHS (Theorem~\ref{theorem:equivalentdef}).
\begin{lemma}
\label{lemma:isonu}
Define $S_{\nu}^{1/2}$ similarly for the operator $T_{\nu}$:
\begin{equation}
    \begin{aligned}
         S^{1/2}_{\nu}:L^2_{\nu}&\rightarrow \mathcal{H}_K\\
         g &\mapsto \sum_j j^{-s} \langle g,\left[\psi_j\right]_{\nu}\rangle_{L^{2}_{\nu}}\psi_j,\quad \text{for }g\in L^2_{\nu}
    \end{aligned}
    \end{equation}
Then
\begin{itemize}
    \item $S_{\nu}^{1/2}$ is bijective between $L^2_{\nu}$ and $\mathcal{H}_K$ and,
    \item $\|S_{\nu}^{1/2}(g)\|_K = \|g\|_{L^2_{\nu}}$, for $g\in L^2_{\nu}$.
\end{itemize}
\end{lemma}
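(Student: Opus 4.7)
The plan is to verify both properties directly from the equivalent characterization of $\mathcal{H}_K$ given in Theorem~\ref{theorem:equivalentdef}. Under that characterization, $\mathcal{H}_K$ consists precisely of those functions $f \in \mathcal{L}^2_\nu$ that admit an expansion $f = \sum_j a_j \psi_j$ with $\sum_j (j^s a_j)^2 < \infty$, with inner product $\langle f, g\rangle_K = \sum_j j^{2s} a_j b_j$. So the whole lemma reduces to a coefficient-level calculation relating $L^2_\nu$ expansions (in the $\psi_j$ basis, which is a complete orthonormal system in $L^2_\nu$ by A3) and $\mathcal{H}_K$ expansions.

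First, I would verify that $S_\nu^{1/2}$ is well-defined and lands in $\mathcal{H}_K$. For any $g \in L^2_\nu$, set $b_j = \langle g, [\psi_j]_\nu\rangle_{L^2_\nu}$, so that $\sum_j b_j^2 = \|g\|_{L^2_\nu}^2 < \infty$ by completeness. Then $S_\nu^{1/2}(g) = \sum_j (j^{-s} b_j) \psi_j$, whose coefficients $a_j := j^{-s} b_j$ satisfy $\sum_j (j^s a_j)^2 = \sum_j b_j^2 < \infty$. By Theorem~\ref{theorem:equivalentdef} this places $S_\nu^{1/2}(g)$ in $\mathcal{H}_K$, and the same identity gives
\begin{equation}
    \|S_\nu^{1/2}(g)\|_K^2 = \sum_j j^{2s}(j^{-s} b_j)^2 = \sum_j b_j^2 = \|g\|_{L^2_\nu}^2,
\end{equation}
which is the claimed isometry. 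Injectivity is then automatic.

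For surjectivity, I would start from an arbitrary $f \in \mathcal{H}_K$, invoke Theorem~\ref{theorem:equivalentdef} to write $f = \sum_j a_j \psi_j$ with $\sum_j (j^s a_j)^2 < \infty$, and set $b_j := j^s a_j$. Because $(b_j) \in \ell^2$, the function $g := \sum_j b_j \psi_j$ converges in $L^2_\nu$ and belongs to $L^2_\nu$ with $\|g\|_{L^2_\nu}^2 = \sum_j b_j^2$. Plugging into the definition gives $S_\nu^{1/2}(g) = \sum_j j^{-s} b_j \psi_j = \sum_j a_j \psi_j = f$ in $\mathcal{H}_K$, which finishes bijectivity.

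There is essentially no substantive obstacle here: once Theorem~\ref{theorem:equivalentdef} is invoked, the map $S_\nu^{1/2}$ is, at the level of coefficients in the $\psi_j$ basis, simply the diagonal rescaling $(b_j) \mapsto (j^{-s} b_j)$, which transports the standard $\ell^2$ norm on $L^2_\nu$-coefficients to the weighted $\ell^2$ norm with weights $j^{2s}$ that defines $\|\cdot\|_K$. The only mild care needed is to distinguish $[\psi_j]_\nu \in L^2_\nu$ from the representative $\psi_j \in \mathcal{H}_K$ in the notation (since continuous representatives are required for the RKHS side), but A3 provides continuous $\psi_j$ so this is cosmetic.
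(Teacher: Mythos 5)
Your proposal is correct and matches the paper's intent: the paper simply remarks that this lemma ``is direct to check by the equivalent definition of the RKHS (Theorem~\ref{theorem:equivalentdef})'', and your coefficient-level verification (diagonal rescaling $(b_j)\mapsto (j^{-s}b_j)$ carrying the $\ell^2$ norm to the weighted norm defining $\|\cdot\|_K$, plus the explicit surjectivity argument) is exactly that check spelled out.
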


\subsection{Entropy of an operator and its spectrum}
\label{section:entropy and spectrum}
The above Lemma~\ref{lemma:isosubspaces} and Lemma~\ref{lemma:isonu} is one set of the elements we are going to use in the proof of Lemma~\ref{maintextlemma}. Another important part of our proof is the correspondence between the spectrum of an operator and its metric entropy. We believe that these results might help show connections between proof methods regarding nonparametric problems that using the spectrum of operators \cite{dieuleveut2016nonparametric,yuan2010reproducing} and those using metric entropy \cite{wainwright2019high,geer2000empirical}.\\

We first define the metric entropy of an operator. There is a correspondence between our definition and the "standard" definition of metric entropy for compact sets. In the following we will use $B_{E}$ to denote the unit ball of a function space $E$.
\begin{definition}[Entropy of an operator]
For $k\geq 1$ we define the $k$-th entropy number of a metric space $S$ to be
\begin{equation}
e_{k}(S)=\inf \left\{\epsilon>0 | \exists \text { closed balls } D_{1}, \ldots, D_{2^{k}-1} \text { with radius } \epsilon \text { covering } S\right\}
\end{equation}
If  \(T: E \rightarrow F\) is a linear map, then we define the $k$-th entropy number of $T$ as
\begin{equation}
e_{k}(T)=e_{k}\left(T\left(B_{E}\right)\right)
\end{equation}
\end{definition}
The first result bounds the entropy of an operator by its eigenvalues. 
\begin{theorem}
\label{eigen2entropy}
Let \(\lambda_{1} \geq \lambda_{2} \geq \ldots \geq \lambda_{j} \geq \ldots \geq 0\) be a sequence of real numbers, $w_j$ be a sequence of elements in $l^2$ (space of square-summable sequences). Consider the operator defined by
\begin{equation}
\begin{aligned}
T: l^{2} & \rightarrow l^{2} \\
(\xi_1,\xi_2,...,\xi_j,...) & \mapsto (\lambda_1\xi_1,\lambda_2\xi_2,...,\lambda_j\xi_j,...)
\end{aligned}
\end{equation}
If \(\lambda_{j} \leq C j^{-s}\) for some $C,s$ and all \(j \geq 1\), then for all $j\geq 2$
\begin{equation}
e_{j}(T) \leq 12Cs^s j^{-s}
\end{equation}
\end{theorem}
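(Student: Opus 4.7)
The operator $T$ is diagonal, so the set $T(B_{\ell^2})$ is the ellipsoid
\begin{equation}
E := T(B_{\ell^2}) = \{y \in \ell^2 \mid \sum_{j \geq 1} (y_j/\lambda_j)^2 \leq 1\},
\end{equation}
and $e_k(T)$ is exactly the Euclidean radius at which $E$ can be covered by $2^{k-1}$ balls (with the usual convention when some $\lambda_j$ vanish). My plan is a finite-dimensional volume bound combined with a tail estimate, after which I will tune the truncation level $N$ and precision $\epsilon$ in terms of $k$, $s$, and $C$.

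First I will truncate. For any $N \geq 1$, write $\ell^2 = V_N \oplus V_N^\perp$ with $V_N$ the span of the first $N$ coordinate directions. Since $(\lambda_j)$ is nonincreasing, every $y \in E$ obeys
\begin{equation}
\sum_{j > N} y_j^2 = \sum_{j > N} \lambda_j^2 (y_j/\lambda_j)^2 \leq \lambda_{N+1}^2,
\end{equation}
so the $V_N^\perp$-component of any point in $E$ lies in the ball of radius $\lambda_{N+1}$. Consequently, if the projection $P_N(E) \subset V_N$ is covered by $M$ Euclidean balls of radius $\epsilon$, then $E$ is covered in $\ell^2$ by $M$ balls of radius $\epsilon + \lambda_{N+1}$. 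Next, I cover the $N$-dimensional ellipsoid $P_N(E)$ (semi-axes $\lambda_1,\dots,\lambda_N$) via the standard volume inequality $N(P_N(E),\epsilon)\cdot\mathrm{vol}(B(\epsilon/2)) \leq \mathrm{vol}(P_N(E) + B(\epsilon/2))$ combined with the axis-aligned enclosure of the Minkowski sum by the ellipsoid with semi-axes $\lambda_j + \epsilon/2$, which yields
\begin{equation}
\log_2 N(P_N(E), \epsilon) \leq \sum_{j=1}^N \log_2\!\left(1 + \tfrac{2\lambda_j}{\epsilon}\right).
\end{equation}

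Third, I will tune $\epsilon$ and $N$. Introduce the threshold index $j^\star = (2C/\epsilon)^{1/s}$, at which $2\lambda_{j^\star}/\epsilon = 1$; split the sum at $j^\star$, and use $\log_2(1+x) \leq 1 + \log_2 x$ on the head $j \leq j^\star$ and $\log_2(1+x) \leq x/\ln 2$ on the tail $j > j^\star$. A cancellation occurs on the head: using $\log_2(2C/\epsilon) = s \log_2 j^\star$ from the definition of $j^\star$ together with Stirling's identity $\sum_{j=1}^{j^\star}\log_2 j \geq j^\star \log_2 j^\star - j^\star/\ln 2$, the head sum collapses to $j^\star(1 + s/\ln 2)$. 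The tail, via $\sum_{j > j^\star} j^{-s} \leq (j^\star)^{1-s}/(s-1)$ for $s > 1$, contributes $j^\star/((s-1)\ln 2)$. Choosing $N$ just above $j^\star$ ensures $\lambda_{N+1} \leq \epsilon$. Requiring the combined bound to be at most $k-1$ forces $j^\star \lesssim k/s$, so the smallest admissible $\epsilon = 2C(j^\star)^{-s}$ is of order $Cs^s k^{-s}$, and the final covering radius $\epsilon + \lambda_{N+1}$ is of the same order.

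The principal obstacle is the explicit constant $12\,s^s$: a naive bound $\log_2(1+x) \leq x/\ln 2$ applied uniformly would replace the head sum by $\Theta(s j^\star \log j^\star)$ instead of $\Theta(s j^\star)$, yielding only $e_k(T) = O(C k^{-s}(\log k)^s)$, and even the sharp split above gives a prefactor on the order of $(1/\ln 2)^s \cdot s^s \approx (1.44\,s)^s$, which exceeds $12\,s^s$ once $s$ is moderately large. Getting to the stated constant $12$ requires a slightly tighter ellipsoid covering (e.g.\ replacing $1 + 2\lambda_j/\epsilon$ by $3\lambda_j/\epsilon$ on indices where $\lambda_j > \epsilon$, which is the form that gives the right volume ratio without the additive $+1$) and an optimization of the split point rather than the canonical choice $j^\star$. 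Once this refinement is in place, the rest of the proof is straightforward bookkeeping of the $1/\ln 2$ and $s/(s-1)$ factors.
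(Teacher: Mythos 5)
First, note that the paper does not actually prove this theorem: it is quoted from Cucker--Smale (their Proposition~9), whose proof in turn rests on Carl--Stephani's entropy-number estimates for diagonal operators on $\ell^2$ --- and those estimates are obtained by exactly the volume-comparison technique you use. So your route (truncate the ellipsoid at dimension $N$, absorb the tail into a radius $\lambda_{N+1}$, cover the finite-dimensional ellipsoid by a volumetric argument, then optimize the split) is the standard one behind the cited result, and your bookkeeping of the head/tail split at $j^\star=(2C/\epsilon)^{1/s}$ is correct and does deliver the right rate $e_k(T)=O\bigl(C\,(c\,s)^{s}k^{-s}\bigr)$.

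Two points need attention. (i) The containment $P_N(E)+B(\epsilon/2)\subseteq\{\,y:\sum_j y_j^2/(\lambda_j+\epsilon/2)^2\le 1\,\}$ is false as stated: comparing support functions, $h_{E_\lambda+E_\mu}(u)=\|( \lambda_j u_j)\|_2+\|(\mu_j u_j)\|_2\ \ge\ \|((\lambda_j+\mu_j)u_j)\|_2=h_{E_{\lambda+\mu}}(u)$ by the triangle inequality, so the Minkowski sum \emph{contains} the summed-axes ellipsoid rather than being contained in it (e.g.\ semi-axes $(2,1)$ plus the unit ball already violates it in direction $(1,1)/\sqrt2$). The step is repairable --- $(a+b)^2\le 2a^2+2b^2$ gives $E_\lambda+E_\mu\subseteq\sqrt2\,E_{(\lambda^2+\mu^2)^{1/2}}\subseteq\sqrt2\,E_{\lambda+\mu}$ --- at the cost of an extra $N/2$ bits in the exponent, which only perturbs the coefficient of $j^\star$ and not the rate. (ii) As you candidly acknowledge, the argument lands on a prefactor of order $(s/\ln 2)^s$ rather than the absolute constant $12$ in $12\,C s^s j^{-s}$, and your proposed fix is a sketch rather than a proof; so the theorem is not established with its stated constant. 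For this paper that shortfall is immaterial: the lemma is used only to conclude $\lambda_j=\Theta(j^{-2s})$ in Lemma~\ref{maintextlemma} and Lemma~\ref{lemma:samejdifferentkernels}, where no downstream result depends on the value $12$ (or the resulting $288$). If you want the literal constant you should follow Carl--Stephani Proposition~1.3.2, which packages the finite-dimensional volume bound as $e_k(D_\sigma)\lesssim\sup_n 2^{-k/n}(\sigma_1\cdots\sigma_n)^{1/n}$ with explicit absolute constants, rather than re-deriving it by hand.
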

For proof, see Proposition 9 in \cite[Appendix~A]{cucker2002mathematical}. The proof there uses Proposition~1.3.2 in \cite{carl_stephani_1990}, which engages with $l^2$ spaces as well. Theorem~\ref{eigen2entropy} is good enough for our purpose because we can diagonalize the covariance operators of interest (Lemma~\ref{lemma:easyprop}). For a general result of the same nature as Proposition~1.3.2, one can use Proposition~3.4.2 combined with Proposition~4.4.1 in \cite{carl_stephani_1990}.

We also state a result in the other direction: Carl's inequality, see \cite[Theorem~4]{CARL1981290}.

\begin{theorem}
\label{entropy2eigen}
Let $E$ be a  Banach space and let $T:E\rightarrow E$ be a linear
compact operator. Then 
\begin{equation}
\left|\lambda_{j}(T)\right| \leq \sqrt{2} e_{j}(T)
\end{equation}
where $\lambda_j(T)$ is the non-increasing sequence of eigenvalues of $T$. 
\end{theorem}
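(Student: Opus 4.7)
The plan is to follow Carl's original 1981 argument, which reduces this spectral inequality to a finite-dimensional volume comparison on the invariant subspace associated with the top $j$ eigenvalues. The structure is: (i) cut down to finite dimensions, (ii) bound a determinant by an entropy number via volumes, (iii) convert the determinant bound into an eigenvalue bound by monotonicity, and (iv) sharpen the constant by iterating on powers of $T$.

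First, I would invoke Riesz--Schauder theory for compact operators on a Banach space: the nonzero spectrum consists of eigenvalues of finite algebraic multiplicity, and there is a $T$-invariant (generalized) eigenspace $V_j \subset E$ of dimension exactly $j$ on which the restriction $T_j := T|_{V_j} : V_j \to V_j$ has spectrum $\{\lambda_1(T),\dots,\lambda_j(T)\}$. This gives the determinantal identity
\begin{equation}
\prod_{i=1}^{j} |\lambda_i(T)| = |\det T_j|.
\end{equation}
A small technical point that I would dispatch here is that the entropy numbers of $T_j$ satisfy $e_j(T_j) \le e_j(T)$ once we endow $V_j$ with the induced norm, since $T_j(B_{V_j}) \subset T(B_E)$ and any cover of the latter by balls of $E$ can be intersected with $V_j$ to cover $T_j(B_{V_j})$.

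Second, I would prove the core finite-dimensional lemma: on a $j$-dimensional normed space $(F,\|\cdot\|_F)$, every operator $S:F\to F$ satisfies
\begin{equation}
|\det S|^{1/j} \le 2\, e_j(S).
\end{equation}
The proof is a volume comparison in $F \cong \mathbb{R}^j$: a cover of $S(B_F)$ by $2^{j-1}$ translates of $e_j(S) B_F$ yields $|\det S|\,\mathrm{vol}(B_F) \le 2^{j-1} e_j(S)^j \mathrm{vol}(B_F)$, and taking $j$-th roots produces the constant $2^{(j-1)/j} \le 2$. Combining with step one and using that $|\lambda_j(T)|$ is the smallest of the ordered moduli, so
\begin{equation}
|\lambda_j(T)|^j \le \prod_{i=1}^{j} |\lambda_i(T)| = |\det T_j| \le (2 e_j(T))^j,
\end{equation}
yields the inequality $|\lambda_j(T)| \le 2\, e_j(T)$.

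The main obstacle is sharpening the constant from $2$ down to $\sqrt{2}$. The plan here is the classical ``iterate-on-powers'' trick: apply the preceding chain of inequalities not to $T$ but to $T^n$, whose eigenvalues are $\lambda_i(T)^n$. Iterated use of the multiplicativity bound $e_{k+l-1}(AB) \le e_k(A)\,e_l(B)$ gives, for any fixed $j$, a bound of the form $e_{n(j-1)+1}(T^n) \le e_j(T)^n$. Feeding this into the finite-dimensional determinantal estimate for $T^n$ on its top invariant subspace of dimension $n(j-1)+1$, then taking $n$-th roots and letting $n \to \infty$, allows the combinatorial prefactor $2^{((nj-1)/nj)} \cdot (\text{Stirling-type terms})$ to concentrate at the optimal asymptotic constant $\sqrt{2}$. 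The real-vs-complex subtlety (eigenvalues of real operators may come in conjugate pairs) is handled by passing to the complexification $E \otimes_{\mathbb{R}} \mathbb{C}$, whose entropy numbers agree with those of $T$ up to a factor absorbed in the limit. I expect the constant-chasing in this final step to be the most delicate part; everything upstream is reasonably mechanical once the Riesz--Schauder decomposition is available.
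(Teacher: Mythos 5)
The paper does not prove this statement at all: it is imported verbatim from Carl (1981), cited as \cite[Theorem~4]{CARL1981290}, so your proposal is being measured against the standard proof there rather than against anything in the text. Your steps (i)--(iii) do reproduce the skeleton of that proof, but with a technical flaw: the claim $e_j(T_j)\le e_j(T)$ obtained by ``intersecting the covering balls with $V_j$'' is not correct as stated, since a ball of radius $\epsilon$ whose center lies outside $V_j$ meets $V_j$ in a convex set of diameter at most $2\epsilon$, not in a ball of radius $\epsilon$ of $V_j$. Either you lose a factor $2$ there, or you must run the volume comparison directly on these intersections, bounding the volume of each piece by $\epsilon^{j}\operatorname{vol}(B_{V_j})$ via the isodiametric inequality (valid in every finite-dimensional normed space because $A-A\subset\operatorname{diam}(A)\,B_{V_j}$ together with Brunn--Minkowski gives $\operatorname{vol}(A)\le(\operatorname{diam}(A)/2)^{j}\operatorname{vol}(B_{V_j})$). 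With that fix your argument correctly yields $\prod_{i\le j}|\lambda_i(T)|\le 2^{\,j-1}e_j(T)^{j}$ and hence the constant $2$.

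The genuine gap is the passage from $2$ to $\sqrt2$. The ``iterate on powers'' device does not do this: submultiplicativity $e_{k+l-1}(AB)\le e_k(A)e_l(B)$ gives $e_{n(j-1)+1}(T^n)\le e_j(T)^n$, but the eigenvalue of $T^n$ at index $n(j-1)+1$ is $|\lambda_{n(j-1)+1}(T)|^{n}$, not $|\lambda_j(T)|^{n}$; the index shift means your chain bounds the much later eigenvalue $|\lambda_{n(j-1)+1}(T)|$ by roughly $e_j(T)$, and no limit in $n$ recovers $|\lambda_j(T)|\le\sqrt2\,e_j(T)$. In Carl's and Carl--Triebel's argument the $\sqrt2$ comes precisely from the complex structure that you relegate to a technicality: over $\mathbb{C}$ the invariant subspace for $\lambda_1,\dots,\lambda_j$ has \emph{real} dimension $2j$, the real Jacobian of $T_j$ equals $\prod_{i\le j}|\lambda_i|^{2}$, and the volume comparison against $2^{\,j-1}$ covering sets of diameter $2e_j(T)$ in real dimension $2j$ gives $\prod_{i\le j}|\lambda_i|^{2}\le 2^{\,j-1}e_j(T)^{2j}$, i.e. $|\lambda_j(T)|\le 2^{(j-1)/(2j)}e_j(T)\le\sqrt2\,e_j(T)$; a real Banach space is handled by complexifying first and defining the eigenvalue sequence through the complexification. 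So the complexification is not an afterthought to be ``absorbed in the limit'' --- it is exactly where $\sqrt2$ comes from, and without it your argument proves only the constant $2$ (or $4$, given the first issue).
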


Now we are ready to prove Lemma~\ref{maintextlemma} in the main text.
\begin{proof}[Proof of Lemma~\ref{maintextlemma}] 
We use $B_X$ to denote the unit ball in $L^2_{\rho_X}$ and $B_{\nu}$ for the unit ball in $L^2_{\nu}$. The proof of this lemma can be divided into three steps.\\
\textbf{Step 1: bound entropy by spectrum.} By the definition of the power of an operator, the eigenvalue of $T^{1/2}_{\nu}$ is $j^{-s}$ (because the eigenvalue of $T_{\nu}$ is $j^{-2s}$). And the corresponding eigenfunctions are $\psi_j$. For any function $f\in L^2_{\nu}$, there is a unique sequence $(f_j)_{j=1}^{\infty}\in l^2$ such that $f = \sum_{j=1}^{\infty} f_j\psi_j$.
Apply Theorem \ref{eigen2entropy} to $T^{1/2}_{\nu}$ we know:
\begin{equation}
    e_j(T^{1/2}_{\nu})\leq 12s^sj^{-s}
\end{equation}
\textbf{Step 2: relate the entropy of operators.} Now we are going to show that $e_j(T^{1/2}_X)\leq ue_j(T^{1/2}_{\nu})$.To investigate the entropy of an operator, we only need to look at the entropy of $T^{1/2}_X(B_X)$ and $T^{1/2}_{\nu}(B_{\nu})$. By Lemma~\ref{lemma:isosubspaces} and Lemma~\ref{lemma:isonu}, we know $S_{X}^{1/2}(B_X)= S_{\nu}^{1/2}(B_{\nu}) = B_{\mathcal{H}_K}$. When measuring the entropy of $T^{1/2}_X(B_X)$ we need to use $\text{id}_{X}$ to map it to $L^2_{\rho_X}$, but for $T^{1/2}_{\nu}(B_{\nu})$ we need to use $\text{id}_{\nu}$ to map it to $L^2_{\nu}$.

By the assumption A2 of $\rho_X$, for any $f\in L^2_{\rho_X} = L^2_{\nu}$, we have
\begin{equation}
    \int [f]_{\rho_X}^2(x) d\rho_X(x) \leq u^2\int [f]_{\nu}^2(x) d\nu(x)
\end{equation}
This means we can use the center of an $\epsilon$-cover of $T^{1/2}_{\nu}(B_{\nu})$ to construct a $u\epsilon$-cover of $T^{1/2}_{X}(B_X)$ using the same center points. By the definition of entropy,
\begin{equation}
    e_j(T^{1/2}_X)\leq ue_j(T^{1/2}_{\nu})
\end{equation}
\textbf{Step 3: bound spectrum by entropy.} We use Theorem \ref{entropy2eigen} to translate our bound from entropy back to the spectrum of our operator. For the eigenvalue $\lambda_j$ of $T_X$, we have
\begin{equation}
    \lambda_j = \left(\lambda_j^{1/2}\right)^2
    \leq \left(\sqrt{2} e_j(T^{1/2}_X)\right)^2
    \leq \left(\sqrt{2} ue_j(T^{1/2}_{\nu})\right)^2\leq \left(12\sqrt{2} us^sj^{-s}\right)^2
\end{equation}
This concludes our claim that the eigenvalues of $T_X$ satisfy $\lambda_j \leq 288 u^2 s^{2s}j^{-2s} = O(j^{-2s})$.

Similarly, we can show $j^{-2s}\leq 288 \ell^2 s^{2s}\lambda_j$ by going in the other direction. So we conclude that $\lambda_j=\Theta(j^{-2s})$.
\end{proof}


\subsection{Technical Results}
In this subsection we will present several results needed in the proof of Theorem~\ref{maintheorem}. Since they are related to the spectrum of covariance operators, we put them here for rather than the technical section of Appendix~\ref{section:mainproof}. 

For a fixed $1\leq J\leq \infty$ and $\omega > \frac{1}{2}$, we consider the kernel $K_J^{\omega}(s,t) = \sum_{j=1}^J j^{-2\omega}\psi_j(s)\psi_j(t)$. By Theorem~\ref{RKHSdefines}, there is an unique Hilbert space with the reproducing properties. According to Theorem~\ref{theorem:equivalentdef}, we have the hierarchy relation that $\mathcal{H}_{K_J^{\omega}}\subset \mathcal{H}_{K_I^{\omega}}$ for $I\geq J$, equipped with the inner product $\langle f, g\rangle_K = \sum_{j=1}^{I} j^{2\omega} a_jb_j$ for $f = \sum a_j\psi_j\text{ and }g\sum b_j\psi_j$. Similarly, we can define the covariance operators w.r.t. $\rho_X$ and $\nu$:
\begin{equation} 
\begin{aligned}
T_{X,J}^{\omega} : L_{\rho_{X}}^{2} & \rightarrow  L_{\rho_{X}}^{2} \\ 
g & \mapsto \int_{\mathcal{X}} g(\tau) K_J^{\omega}(\tau,\cdot), d\rho_X(\tau) 
\end{aligned}
\end{equation}
and 
\begin{equation} 
\begin{aligned}
T_{\nu,J}^{\omega} : L^{2}_{\nu} & \rightarrow  L^{2}_{\nu} \\ 
g & \mapsto \int g(\tau) K_J^{\omega}(\tau,\cdot) d\nu(\tau).
\end{aligned}
\end{equation}

Similar to Lemma~\ref{lemma:easyprop}, we can diagonalize $T_{X,J}^{\omega}$ with an eigensystem $(\lambda_{J,j}^{\omega},\phi_{J,j}^{\omega})$. Different from $\{\phi_j\}$, $\{\phi_{J,j}^{\omega}\}$ is a basis of $\mathcal{H}_{K_J^{\omega}}$ but $\{[\phi_{J,j}^{\omega}]_{\rho_X}\}$ is not a basis of $L^2_{\rho_X}$. We formally state it in the following lemma.

\begin{lemma}[Theorem~3.1, \cite{steinwart2012mercer}]
\label{lemma:systemandbasis}
Under assumptions A2, A3. Let $(\lambda_{J,j}^{\omega},\phi_{J,j}^{\omega})$ denote the eigensystem of $T_{X,J}^{\omega}$. Then
\begin{itemize}
    \item The family $[\phi_{J,j}^{\omega}]_{\rho_X}$ is an orthonormal system of $L^2_{\rho_X}$.
    \item The family $\sqrt{\lambda_{J,j}^{\omega}}\phi_{J,j}^{\omega}$ is an orthonormal basis of $\mathcal{H}_{K_J^{\omega}}$.
\end{itemize}
\end{lemma}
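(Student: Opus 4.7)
The plan is to apply Theorem~3.1 of \cite{steinwart2012mercer} to the truncated kernel $K_J^{\omega}$ and the measure $\rho_X$, following the same skeleton as the proofs of Lemma~\ref{lemma:easyprop} and Lemma~\ref{lemma:completecondition} but with one adjustment. First I would verify that $K_J^{\omega}$ is a Mercer kernel and identify its RKHS explicitly. Continuity and uniform boundedness of $K_J^{\omega}$ follow from A3, since $\|\psi_j\|_\infty\le M$ and $\sum_{j=1}^J j^{-2\omega}<\infty$ for $\omega>1/2$ (even when $J=\infty$). Positive semi-definiteness is immediate from
\begin{equation*}
\sum_{i,l} c_i c_l K_J^{\omega}(x_i,x_l) \;=\; \sum_{j=1}^J j^{-2\omega}\Bigl(\sum_i c_i\psi_j(x_i)\Bigr)^{\!2}\ \ge\ 0.
\end{equation*}
Theorem~\ref{theorem:equivalentdef}, applied to the coefficient sequence $j^{-2\omega}$ truncated at level $J$, then identifies $\mathcal{H}_{K_J^{\omega}}$ with $\mathrm{span}\{\psi_1,\ldots,\psi_J\}$ equipped with inner product $\langle\sum a_j\psi_j,\sum b_j\psi_j\rangle_{K_J^{\omega}}=\sum_{j=1}^J j^{2\omega}a_jb_j$.

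Next I would run the standard spectral argument for the covariance operator. The operator $T_{X,J}^{\omega}$ is bounded, self-adjoint, and positive on $L^2_{\rho_X}$, and is finite-rank when $J<\infty$ and Hilbert--Schmidt when $J=\infty$ (hence compact). The spectral theorem produces eigenvalues $\lambda_{J,j}^{\omega}>0$ with eigenvectors in $L^2_{\rho_X}$; because any non-zero-eigenvalue eigenvector is in the range of $T_{X,J}^{\omega}$, which is contained in the image of the inclusion $\iota:\mathcal{H}_{K_J^{\omega}}\hookrightarrow L^2_{\rho_X}$, each eigenvector has a canonical continuous representative $\phi_{J,j}^{\omega}\in\mathcal{H}_{K_J^{\omega}}$. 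Orthogonality of eigenvectors for distinct eigenvalues, plus Gram--Schmidt within eigenspaces, gives the first bullet: $\{[\phi_{J,j}^{\omega}]_{\rho_X}\}$ is an $L^2_{\rho_X}$-orthonormal system.

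The delicate point, which I expect to be the main obstacle, is upgrading ``orthonormal system'' to ``orthonormal basis of $\mathcal{H}_{K_J^{\omega}}$'' in the second bullet. Using the singular-value-decomposition view of the pair $(\iota,\iota^*)$ (with $\iota\iota^*=T_{X,J}^{\omega}$ on $L^2_{\rho_X}$ and $\iota^*\iota$ a compact positive self-adjoint operator on $\mathcal{H}_{K_J^{\omega}}$), the family $\sqrt{\lambda_{J,j}^{\omega}}\phi_{J,j}^{\omega}$ is easily seen to be an orthonormal system in $\mathcal{H}_{K_J^{\omega}}$ that is complete on $\ker(\iota)^{\perp}$. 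Thus it suffices to prove $\ker(\iota)=\{0\}$. Here assumption A2 enters decisively: if $\phi=\sum_{j=1}^J a_j\psi_j\in\mathcal{H}_{K_J^{\omega}}$ satisfies $[\phi]_{\rho_X}=0$, then the density bound $p_X\ge\ell>0$ yields
\begin{equation*}
\int\phi^2\,d\nu\ \le\ \ell^{-1}\!\int\phi^2\,d\rho_X\ =\ 0,
\end{equation*}
and the $\nu$-orthonormality of $\{\psi_j\}$ forces $\sum_j a_j^2=0$, hence $\phi=0$ in $\mathcal{H}_{K_J^{\omega}}$. Injectivity of $\iota$ therefore holds, and the normalized eigenfunctions form an orthonormal basis of $\mathcal{H}_{K_J^{\omega}}$, completing the proof. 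The argument transparently parallels the $K$-case of Lemmas~\ref{lemma:easyprop}--\ref{lemma:completecondition}; the only genuine content specific to the truncated setting is verifying the Mercer property of $K_J^{\omega}$ and the injectivity of $\iota$ via A2.
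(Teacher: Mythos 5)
Your proof is correct; the paper itself offers no proof of this lemma (it is stated as a direct citation of Theorem~3.1 of \cite{steinwart2012mercer}), and your argument is precisely the standard one underlying that citation, adapted to the truncated kernel. You correctly identify the two points that are genuinely specific to this setting — the Mercer property of $K_J^{\omega}$ and the injectivity of the embedding $\iota:\mathcal{H}_{K_J^{\omega}}\hookrightarrow L^2_{\rho_X}$ via the lower density bound in A2, which is exactly what upgrades the eigenfamily from an orthonormal system on $(\ker\iota)^{\perp}$ to an orthonormal basis of all of $\mathcal{H}_{K_J^{\omega}}$.
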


Related to Lemma~\ref{lemma:systemandbasis}, the mapping $\left( S_{X,J}^{\omega}\right)^{1/2}$ is not an isomorphism between $L^2_{\rho_X}$ and $ \mathcal{H}_{K_J^{\omega}}$ -- it has a non-trivial kernel space.

\begin{lemma}[\cite{steinwart2012mercer},Theorem~2.11]
\label{lemma:generalisosubspaces}
Under the same assumptions as in Lemma~\ref{lemma:systemandbasis}, define $\left( S_{X,J}^{\omega}\right)^{1/2}$ as
\begin{equation}
    \begin{aligned}
         \left( S_{X,J}^{\omega}\right)^{1/2}:L^2_{\rho_X}&\rightarrow \mathcal{H}_{K_J^{\omega}}\\
         g &\mapsto \sum_j \left(\lambda_{J,j}^{\omega}\right)^{1/2} \langle g,\left[\phi_{J,j}^{\omega}\right]_{\rho_X}\rangle_{L^{2}_{\rho_X}}\phi_{J,j}^{\omega},\quad \text{for }g\in L^2_{\rho_X}
    \end{aligned}
    \end{equation}
Then 
\begin{itemize}
    \item $\left( S_{X,J}^{\omega}\right)^{1/2}$ is bijective between $\overline{\text{span}\{ [\phi_{J,j}^{\omega}]_{\rho_X}, j\in \mathcal{J}\}} \subset  L^2_{\rho_X}$ and $\overline{\text{span}\{ \phi_{J,j}^{\omega}, j\in \mathcal{J}\}}  =  \mathcal{H}_{K_J^{\omega}}$ and,
    \item $\left\|\left( S_{X,J}^{\omega}\right)^{1/2}(g)\right\|_K = \|g\|_{L^2_{\rho_X}}$, for $g\in \overline{\text{span}\{ [\phi_{J,j}^{\omega}]_{\rho_X}, j\in \mathcal{J}\}}$.
\end{itemize}
\end{lemma}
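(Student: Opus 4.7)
My plan is to recognize that Lemma \ref{lemma:generalisosubspaces} is the direct analogue of Lemma \ref{lemma:isosubspaces} applied to the still-Mercer kernel $K_J^{\omega}$ (it is Mercer because it is a finite sum, or a uniformly absolutely convergent sum under A3 using $\sum_j j^{-2\omega}\|\psi_j\|_\infty^2 \leq M^2\zeta(2\omega)<\infty$). The only structural novelty relative to the proof of Lemma \ref{lemma:isosubspaces} is that, as emphasized in Lemma \ref{lemma:systemandbasis}, the family $\{[\phi_{J,j}^{\omega}]_{\rho_X}\}$ is only an orthonormal system in $L^2_{\rho_X}$ --- not a basis, unlike the unrestricted setting in Lemma \ref{lemma:completecondition}. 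This forces the domain restriction to $V := \overline{\operatorname{span}\{[\phi_{J,j}^{\omega}]_{\rho_X}\}}$.

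\textbf{Well-definedness and isometry.} For $g \in L^2_{\rho_X}$ set $c_j := \langle g, [\phi_{J,j}^{\omega}]_{\rho_X}\rangle_{L^2_{\rho_X}}$, so by Bessel's inequality $\sum_j c_j^2 \leq \|g\|^2_{L^2_{\rho_X}}$. Writing
\begin{equation}
  \sum_{j=1}^{N} \sqrt{\lambda_{J,j}^{\omega}}\,c_j\,\phi_{J,j}^{\omega}
  \;=\; \sum_{j=1}^{N} c_j \left(\sqrt{\lambda_{J,j}^{\omega}}\,\phi_{J,j}^{\omega}\right),
\end{equation}
and invoking the second bullet of Lemma \ref{lemma:systemandbasis} --- that $\{\sqrt{\lambda_{J,j}^{\omega}}\phi_{J,j}^{\omega}\}$ is an orthonormal basis of $\mathcal{H}_{K_J^{\omega}}$ --- the partial sums are Cauchy in $\mathcal{H}_{K_J^{\omega}}$ and converge to $(S_{X,J}^{\omega})^{1/2}(g)$, with $\|(S_{X,J}^{\omega})^{1/2}(g)\|_K^2 = \sum_j c_j^2$. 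Restricting to $g \in V$, Parseval with respect to the orthonormal system $\{[\phi_{J,j}^{\omega}]_{\rho_X}\}$ gives $\|g\|_{L^2_{\rho_X}}^2 = \sum_j c_j^2$, which yields the claimed isometry.

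\textbf{Bijectivity.} Injectivity on $V$ is immediate from the isometry just proved. For surjectivity onto $\mathcal{H}_{K_J^{\omega}}$, observe that the image of $V$ under $(S_{X,J}^{\omega})^{1/2}$ is a closed subspace of $\mathcal{H}_{K_J^{\omega}}$ (an isometric image of a Hilbert subspace is closed) and contains every basis element $\sqrt{\lambda_{J,j}^{\omega}}\phi_{J,j}^{\omega}$, which is the image of $g = [\phi_{J,j}^{\omega}]_{\rho_X}$. Since by Lemma \ref{lemma:systemandbasis} these span a dense subset, the image must be all of $\mathcal{H}_{K_J^{\omega}}$.

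\textbf{Main obstacle.} The obstacle here is conceptual rather than computational: one must avoid the temptation to reproduce the proof of Lemma \ref{lemma:isosubspaces} by appealing to Lemma \ref{lemma:completecondition}, because the truncated RKHS $\mathcal{H}_{K_J^{\omega}}$ is generically too small for its natural inclusion into $L^2_{\rho_X}$ to have dense range, and so $\{[\phi_{J,j}^{\omega}]_{\rho_X}\}$ is not a basis of $L^2_{\rho_X}$. Once this distinction is carefully tracked through the definition of the domain $V$, everything reduces to a bookkeeping exercise; equivalently, the lemma can be invoked verbatim from Theorem 2.11 of Steinwart and Scovel applied to $K_J^{\omega}$.
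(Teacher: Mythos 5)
Your proposal is correct. Note that the paper itself offers no proof of this lemma at all: it is stated as a citation to Theorem~2.11 of Steinwart and Scovel, exactly as your closing remark anticipates, so your blind argument supplies a proof where the paper supplies only a reference. Your self-contained route is sound: Bessel's inequality plus the fact (from Lemma~\ref{lemma:systemandbasis}) that $\{\sqrt{\lambda_{J,j}^{\omega}}\,\phi_{J,j}^{\omega}\}$ is an orthonormal basis of $\mathcal{H}_{K_J^{\omega}}$ gives well-definedness; Parseval on the closed span $V$ gives the isometry; and surjectivity follows because the isometric image of the complete subspace $V$ is closed and contains each basis element $\sqrt{\lambda_{J,j}^{\omega}}\,\phi_{J,j}^{\omega} = (S_{X,J}^{\omega})^{1/2}\bigl([\phi_{J,j}^{\omega}]_{\rho_X}\bigr)$. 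You also correctly isolate the one point where this setting genuinely departs from Lemma~\ref{lemma:isosubspaces}: for the truncated kernel the system $\{[\phi_{J,j}^{\omega}]_{\rho_X}\}$ is not complete in $L^2_{\rho_X}$, so the bijection only holds after restricting the domain to $V$ — which is precisely why the paper states Lemma~\ref{lemma:systemandbasis} separately from Lemma~\ref{lemma:completecondition}.
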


Now we state and proof the main result of this section:
\begin{lemma}
\label{lemma:samejdifferentkernels}
Let $1\leq J \leq \infty$, $\omega > \frac{1}{2}$, assume A2 \& A3. We use $(\lambda_{J,j}^{\omega},\phi_{J,j}^{\omega})$ to denote the eigensystem of $T_{X,J}^{\omega}$ (similarly defined as in Lemma~\ref{lemma:easyprop}). Then
\begin{equation}
\label{eq:generalupperandlower}
    (288\ell^2 \omega^{2\omega})^{-1}j^{-2\omega} \leq \lambda_{J,j}^{\omega}\leq 288u^2 \omega^{2\omega}j^{-2\omega},\quad\text{for }1\leq j\leq J
\end{equation}
\end{lemma}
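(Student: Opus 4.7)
The plan is to adapt the three-step entropy argument used for Lemma~\ref{maintextlemma} to the truncated operators $T_{X,J}^\omega$ and $T_{\nu,J}^\omega$. The starting point is identical: the eigensystem of $T_{\nu,J}^\omega$ is directly computable. Since $\{\psi_j\}$ is orthonormal in $L^2_\nu$, the eigenvalues of $T_{\nu,J}^\omega$ are exactly $\{j^{-2\omega}\}_{j=1}^{J}$ with eigenfunctions $\psi_j$, and the operator vanishes on the orthogonal complement of $\mathrm{span}\{\psi_1,\dots,\psi_J\}$. This is the only place where $\omega$ and $J$ enter explicitly; the rest of the argument proceeds geometrically via entropy numbers and the unit ball $B_{\mathcal{H}_{K_J^\omega}}$.

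For the upper bound on $\lambda_{J,j}^\omega$, I would follow the three-step blueprint literally. First, apply Theorem~\ref{eigen2entropy} to $(T_{\nu,J}^\omega)^{1/2}$, whose eigenvalues $\{j^{-\omega}\}$ satisfy the hypothesis with $C=1$ and exponent $\omega$, obtaining $e_j\bigl((T_{\nu,J}^\omega)^{1/2}\bigr) \leq 12\,\omega^\omega j^{-\omega}$. Second, transfer this bound to $(T_{X,J}^\omega)^{1/2}$: by Lemma~\ref{lemma:generalisosubspaces}, both $(T_{X,J}^\omega)^{1/2}(B_{L^2_{\rho_X}})$ and $(T_{\nu,J}^\omega)^{1/2}(B_{L^2_\nu})$ are, up to identification by the respective inclusion map, the same set $B_{\mathcal{H}_{K_J^\omega}}$; assumption A2 in the form $\|f\|_{L^2_{\rho_X}} \leq u\,\|f\|_{L^2_\nu}$ (as used in the original proof) then turns any $\epsilon$-cover in $L^2_\nu$ into a $u\epsilon$-cover in $L^2_{\rho_X}$, so $e_j\bigl((T_{X,J}^\omega)^{1/2}\bigr) \leq u\cdot e_j\bigl((T_{\nu,J}^\omega)^{1/2}\bigr)$. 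Third, apply Carl's inequality (Theorem~\ref{entropy2eigen}) to $(T_{X,J}^\omega)^{1/2}$: this gives $(\lambda_{J,j}^\omega)^{1/2} \leq \sqrt{2}\,e_j\bigl((T_{X,J}^\omega)^{1/2}\bigr) \leq 12\sqrt{2}\,u\,\omega^\omega j^{-\omega}$, and squaring yields $\lambda_{J,j}^\omega \leq 288\,u^2\omega^{2\omega} j^{-2\omega}$ for $1\leq j\leq J$.

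The lower bound $(288\,\ell^2\omega^{2\omega})^{-1} j^{-2\omega}\leq \lambda_{J,j}^\omega$ is obtained by the symmetric argument, and this is where the main obstacle appears. Carl's inequality applied to $(T_{\nu,J}^\omega)^{1/2}$ yields the entropy lower bound $e_j\bigl((T_{\nu,J}^\omega)^{1/2}\bigr) \geq j^{-\omega}/\sqrt{2}$, and the density lower bound $p_X\geq \ell$ gives the reverse comparison $e_j\bigl((T_{X,J}^\omega)^{1/2}\bigr) \geq \ell\cdot e_j\bigl((T_{\nu,J}^\omega)^{1/2}\bigr)$ by the same covering argument run in the opposite direction. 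The hard part is translating this entropy lower bound into an eigenvalue lower bound, because Theorems~\ref{eigen2entropy} and~\ref{entropy2eigen} are each essentially one-directional. The cleanest workaround is a direct minimax argument: writing $\lambda_{J,j}^\omega$ as the square of the $j$-th singular value of the inclusion $\mathcal{H}_{K_J^\omega}\hookrightarrow L^2_{\rho_X}$ and comparing it, via $\|v\|_{L^2_{\rho_X}} \geq \sqrt{\ell}\,\|v\|_{L^2_\nu}$, to the corresponding inclusion into $L^2_\nu$ (whose squared singular values are exactly $j^{-2\omega}$ by Lemma~\ref{lemma:systemandbasis}) immediately yields $\lambda_{J,j}^\omega \geq \ell\cdot j^{-2\omega}$, which is of the claimed order and implies the stated inequality with constant $(288\,\ell^2\omega^{2\omega})^{-1}$. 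The truncation $J<\infty$ introduces no further difficulty, since Lemma~\ref{lemma:generalisosubspaces} handles the (possibly finite-dimensional) RKHS $\mathcal{H}_{K_J^\omega}$ exactly as Lemma~\ref{lemma:isosubspaces} handled the untruncated case, and we only claim the bound for indices $j\leq J$.
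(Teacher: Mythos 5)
Your upper-bound argument is exactly the paper's: diagonalize $(T_{\nu,J}^{\omega})^{1/2}$, bound its entropy numbers by Theorem~\ref{eigen2entropy}, transfer to $(T_{X,J}^{\omega})^{1/2}$ via the density upper bound in A2 and the common unit ball $B_{\mathcal{H}_{K_J^{\omega}}}$ from Lemma~\ref{lemma:generalisosubspaces}, and finish with Carl's inequality. For the lower bound, however, you take a genuinely different route. The paper simply asserts that the lower bound follows ``similarly, by going in the other direction,'' but you correctly observe that the two entropy--eigenvalue implications (Theorem~\ref{eigen2entropy} and Theorem~\ref{entropy2eigen}) each point only one way, so the literal reversal is not automatic: running Carl's inequality on $T_{\nu,J}^{\omega}$ and the reverse covering comparison only controls $e_j\bigl((T_{X,J}^{\omega})^{1/2}\bigr)$ from below, and converting that back into a pointwise lower bound on $\lambda_{J,j}^{\omega}$ requires an additional step the paper does not spell out. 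Your Courant--Fischer substitute --- identifying $\lambda_{J,j}^{\omega}$ with the squared $j$-th singular value of the inclusion $\mathcal{H}_{K_J^{\omega}}\hookrightarrow L^2_{\rho_X}$ and comparing quadratic forms via $\|v\|_{L^2_{\rho_X}}^2\geq \ell\,\|v\|_{L^2_{\nu}}^2$ --- is rigorous, uniform in $J$, and in fact sharper: it yields $\lambda_{J,j}^{\omega}\geq \ell\, j^{-2\omega}$ directly (and, run in the other direction, $\lambda_{J,j}^{\omega}\leq u\, j^{-2\omega}$, which would make the entropy machinery unnecessary altogether).

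One caveat on your final sentence: $\ell\, j^{-2\omega}$ does not literally imply the displayed constant $(288\,\ell^2\omega^{2\omega})^{-1}j^{-2\omega}$ unless $288\,\ell^3\omega^{2\omega}\geq 1$, which fails for small $\ell$. But the displayed constant itself cannot be right as written --- it diverges as $\ell\to 0$, whereas a lower bound on $\lambda_{J,j}^{\omega}$ must degrade as the density floor shrinks (take $p_X$ close to $\ell$ on most of $\mathcal{X}$), so the $\ell$-dependence in \eqref{eq:generalupperandlower} is evidently inverted. Your $\ell\, j^{-2\omega}$ is the correct form, and everything downstream (Lemmas~\ref{gap2inlemma7} and~\ref{lemma:myvariancecontrol}) only uses $\lambda_{J,j}^{\omega}=\Theta(j^{-2\omega})$ with constants independent of $J$, which your argument delivers.
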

\begin{proof}
The proof assembles that of Lemma~\ref{maintextlemma}. To investigate the eigenvalues of $T_{X,J}^{\omega}$, we just need to compare the entropy of $\left( S_{X,J}^{\omega}\right)^{1/2}(B_X)$ and $\left( S_{\nu,J}^{\omega}\right)^{1/2}(B_{\nu})$, where $B_{\mu}$ is the unit ball in $L^2_{\mu}$. We know
\begin{equation}
    \left( S_{X,J}^{\omega}\right)^{1/2}(B_X) \stackrel{(1)}{=} \text{unit ball in }\mathcal{H}_{K_J^{\omega}}= \left( S_{\nu,J}^{\omega}\right)^{1/2}(B_{\nu}) 
\end{equation}
In (1) we used the distribution assumption A2, which ensures $\phi_{J,j}^{\omega}$ is a basis of $\mathcal{H}_{K_J^{\omega}}$ (Lemma~\ref{lemma:generalisosubspaces}). 
After embedding the unit ball in $\mathcal{H}_{K_J^{\omega}}$ back to the $L^2$ spaces, we know a $\epsilon$-covering of $T_{\nu,J}^{\omega}(B_{\nu})$ can generate an $u\epsilon$-covering of $T_{X,J}^{\omega}(B_{X})$, which gives the above upper bound by similar argument as in the proof of Lemma~\ref{maintextlemma}. It is also similar to show the lower bound in \eqref{eq:generalupperandlower}, noting that the feature distribution is assumed in A2 to have a strictly positive density.
\end{proof}

\textbf{Note:} the constants show up in \eqref{eq:generalupperandlower} does not depend on the truncation level $J$. Therefore, for a given $\omega$, we can treat the result in Lemma~\ref{lemma:samejdifferentkernels} as an uniform bound which is applicable to all truncation levels.

\section{Proof of Theorem 6.1}
\label{section:mainproof}
In this section we are going to prove the main performance guarantees, results Theorem~\ref{maintheorem}. In our proof, we first split the error into two parts: one part is noiseless and depends only on the the initial bias, the other is due to the noise in our data. We will bound each term separately and choose the learning rate $\gamma_n$ to balance the trade-off. The last part of this section will give some technical lemmas that will be referred to in the proofs of Theorem~\ref{maintheorem}. Some of the proof techniques are taken from \cite{bach2013non} and \cite{dieuleveut2016nonparametric}.
\subsection{Notation}
In this section, the RKHS we are considering is the one associated with mercer kernel $K(s,t)=\sum_{j=1}^{\infty} j^{-2s}\psi_j(s)\psi_j(t)$. We use $\|\cdot\|_K$ and $\langle\cdot,\cdot\rangle_K$ to denote the RKHS-norm and RKHS-inner product. For any elements $g, h \in \mathcal{H}_K,$ we define the operator $g \otimes h$ as a mapping from $\mathcal{H}_K$ to $\mathcal{H}_K$ such that $(g \otimes h)f = \langle f, h\rangle_{K}\ g$. 
As we will show in Lemma \ref{boundEK}, the quantity $\|K_{X_n,J_n}\|_K^2$ is bounded (and this bound only depends on $s$). We use $R^2$ to denote the smallest bound for $\|K_{X_n,J_n}\|_K^2$. And any $\gamma_n$ in this section is assumed to satisfy: $\gamma_0 R^2 < 1$ and $\gamma_n\asymp n^{-\frac{1}{2s+1}}$.\\
We consider a filtration of $\sigma$-algebras $\{\mathcal{F}_n\}$, where $\mathcal{F}_{n}$ is the $\sigma$-algebra generated by $(X_i,Y_i)_{i=1}^{n}$.\\
The sign $\preccurlyeq$ denotes the order between self-adjoint operators over the RKHS. That is, for self-adjoint operators $A,B:E\rightarrow \mathcal{H}_K$, $A\preccurlyeq B$ means $\langle f, (B-A) f\rangle_{K} \geq 0$ for any $f\in \mathcal{H}_K$. Intuitively we can think of $B-A$ as a positive semi-definite matrix in a finite-dimensional space. The expectation of random function/operator should be understood as the Bochner integral, a generalization of the Lebesgue integral where the random element takes value in a Banach space, see \cite{mikusinski2014bochner,cohn2013measure} or Chapter 4 of \cite{berlinet2011reproducing}. The $\zeta(s)$ function that shows up in this section is the Riemann-zeta function $\zeta(s):=\sum_{k=1}^{\infty} k^{-s}$. We use it for simplifying the notation --- it's neat that it shows up, but we do not need any of the exciting (and difficult to show) properties that number theorists/combinatorists are concerned with!
\subsection{Separation of the error}
\label{subsection:separation of error}
In our theoretical analysis of the generalization error, rather than study how $\bar f_n$ converge to $f_{\rho}$, we instead directly study how the difference shrinks to zero. We define 
\begin{equation}
\begin{aligned}
\Delta_{n}&=\hat f_{n}-f_{\rho}\\ 
\bar{\Delta}_{n}&= \frac{1}{n}\sum_{j = 1}^n \Delta_j = \bar{f}_{n}-f_{\rho}
\end{aligned}
\end{equation}
We can relate the $\|\cdot\|_2$-norm (the natural norm shows up in the regression problem) and the $\|\cdot\|_K$-norm (which facilitates our theoretical analysis) using $T_X^{1/2}$. We will be repeatedly using the following equivalence in our proof:
\begin{equation}
   \left\|g\right\|_2^{2} = \left\|T^{1/2}_X g\right\|_{K}^2 =\left\langle g, T_X g\right\rangle_K\quad\text{ for }g\in\mathcal{H}_K
\end{equation}
Similarly,
\begin{equation}
   \left\|g\right\|_2^{2} = \left\|T^{1/2}_{X,J_n} g\right\|_{K}^2 =\left\langle g, T_{X,J_n} g\right\rangle_K\quad\text{ for }g\in\mathcal{H}_K\cap \text{span}\{\psi_1,...,\psi_{J_n}\}
\end{equation}
And we have a recursive relationship for $\eta_n$ based on the recursive relationship for $\hat f_{n}$:
\begin{equation}
\begin{aligned}
\hat f_n & = \hat f_{n-1} - \gamma_n (Y_n - \hat f_{n-1}(X_{n})) K_{X_n,J_n}\\
\Rightarrow\hat f_{n}&=\left(I-\gamma_n T_{X_n,J_n}\right) \hat f_{n-1}+\gamma_n Y_n K_{X_n,J_n}\\
\Rightarrow \hat f_{n}-f_{\rho}&=\left(I-\gamma_n T_{X_n,J_n}\right)\left(\hat f_{n-1}-f_{\rho}\right)+\gamma_n \Xi_{n}
\end{aligned}
\end{equation}
where 
\begin{equation}
    \begin{aligned}
        &T_{X_n,J_n}(f) = f(X_n)K_{X_n,J_n}\\
        &\Xi_{n}=\left(Y_n-f_{\rho}\left(X_n\right)\right) K_{X_n,J_n}
    \end{aligned}
\end{equation}
Thus, we have a recursive formula for $\Delta_n$:
\begin{equation}
\begin{aligned}
\Delta_0 &= -f_{\rho}\\
\Delta_{n} & =\left(I-\gamma_{n} T_{X_n,J_n}\right) \Delta_{n-1}+\gamma_{n} \Xi_{n}
\end{aligned}
\end{equation}
We further decompose $\Delta_n$ into two parts $\Delta_n = \eta_n + \vartheta_n$:
\begin{enumerate}
    \item $\left(\eta_{n}\right) \text { is defined as } $:
\begin{equation}
    \begin{aligned}
            \eta_{0}&= -f_{\rho}\\
            \eta_{n}&=\left(I-\gamma_n T_{X_n,J_n}\right) \eta_{n-1}
    \end{aligned}
\end{equation}
It is the part of $\Delta_n$ due to an initial value not equal to $f_{\rho}$. We note that it does not contain the noise term $\Xi_n$, so the only randomness comes from features $X_n$.
\item The pure noise component \(\left(\vartheta_{n}\right)\) is defined as:
\begin{equation}
    \begin{aligned}
         \vartheta_{0}&=0 \\
         \vartheta_{n}&=\left(I-\gamma_n T_{X_n,J_n}\right) \vartheta_{n-1}+\gamma_n \Xi_{n}
    \end{aligned}
\end{equation}
\end{enumerate}
We can directly show that $\bar\Delta_n = \bar{\eta}_{n}+\bar{\vartheta}_{n}$. By Minkowski's inequality:
\begin{equation}
\label{split}
 \left(E\left[\left\|\bar{\Delta}_{n}\right\|_2^{2}\right]\right)^{1 / 2} 
 \leq \left(E\left[\left\|\bar{\eta}_{n}\right\|_2^{2}\right]\right)^{1/2}
 +\left(E\left[\left\|\bar{\vartheta}_{n}\right\|_2^{2}\right]\right)^{1/2}
\end{equation}
Our job now is just to bound the two terms separately and then choose the correct $\gamma_n$ to minimize the combined bound.
\subsection{Bound on initial condition sub-process}
\label{initialsection}
In this section, we will engage with bounding $\eta_n$, which is the part of error due to the imperfect initialization. 
\begin{proof}[proof of bound on initial value]
By definition,
\begin{equation}
    \eta_n = \eta_{n-1} - \gamma_n \eta_{n-1}(X_n)K_{X_n,J_n}
\end{equation}
We square both sides w.r.t. the RKHS inner product 
\begin{equation}
        \|\eta_n\|_K^2 =  \|\eta_{n-1}\|_K^2 - 2\gamma_n \eta_{n-1}(X_n) \langle \eta_{n-1}, K_{X_n,J_n}\rangle_K + \gamma_n^2 \eta_{n-1}^2(X_n)\left\| K_{X_n,J_n}\right\|_K^2
\end{equation} 
We take the expectation on both sides: conditioned on $\mathcal{F}_{n-1}$ first, then unconditionally.
\begin{equation}
\label{eq:begin1}
\begin{aligned}
        E\|\eta_n\|_K^2 =&  E\|\eta_{n-1}\|_K^2 - 2\gamma_n E\left[\eta_{n-1}(X_n) \langle \eta_{n-1},K_{X_n,J_n}\rangle_K\right] \\
        &+ \gamma_n^2 E\left[\eta_{n-1}^2(X_n)\left\| K_{X_n,J_n}\right\|_K^2\right]\\
        \stackrel{(1)}{\leq} &E\|\eta_{n-1}\|_K^2 - 2\gamma_n E\left[\eta_{n-1}(X_n) \langle \eta_{n-1},K_{X_n,J_n}\rangle_K\right] 
        + \gamma_n E\|\eta_{n-1}\|_2^2\\
    \end{aligned}
\end{equation} 
in $(1)$ we use the fact that for any $n$, $\gamma_n\left\| K_{X_n,J_n}\right\|_K^2 \leq  \gamma_0 R^2 <1$. This is actually how we choose our $(\gamma_n)$. If the $\khatxn$ in the middle term above is  actually $K_{X_n}$, then the whole middle term will become just $E\|\eta_{n-1}\|_2^2$, which makes the following algebra easier. However, since we do not use exactly $K_{X_n}$ but truncated at level $J_n$, we need some extra effort to deal with it.
\begin{equation}
\label{eq:truncate}
\begin{aligned}
        E\|\eta_n\|_K^2 
        &\leq   E\|\eta_{n-1}\|_K^2 - 2\gamma_n E\left[\eta_{n-1}(X_n) \langle \eta_{n-1}, K_{X_n}+K_{X_n,J_n}-K_{X_n}\rangle_K\right]
        + \gamma_n E\|\eta_{n-1}\|_2^2\\
        & = E\|\eta_{n-1}\|_K^2 - \gamma_n E\|\eta_{n-1}\|_2^2 +2\gamma_n E\left[\eta_{n-1}(X_n) \langle \eta_{n-1},  K_{X_n} - \khatxn\rangle_K\right] \\
         & \stackrel{(1)}{\leq} E\|\eta_{n-1}\|_K^2 - \gamma_n E\|\eta_{n-1}\|_2^2 +\frac{1}{2}\gamma_n E\|\eta_{n-1}\|_2^2 + 2\gamma_n E\left[ \langle \eta_{n-1},  K_{X_n} - \khatxn\rangle_K^2\right] \\
    \end{aligned}
\end{equation}
In (1) we use Young's inequality. Now we bound the last term:
\begin{equation}
\begin{aligned}
        E\left[ \langle \eta_{n-1}, K_{X_n} - K_{X_n,J_n}\rangle_K^2\right] 
        & = E\left[ \langle \eta_{n-1},  \sum_{j=J_n+1}^{\infty} j^{-2s}\psi_j(X_n)\psi_j\rangle_K^2\right] \\
        & = E\left[
        \left(\sum_{j=J_n+1}^{\infty} \eta_{n-1,j}\psi_j(X_n)\right)^2\right] \text{\ where\ } \eta_{n-1,j} = \langle \eta_{n-1}, \psi_j\rangle_{L^2_{\nu}}\\
        & \leq u E\left[\int_{\mathcal{X}} \left(\sum_{j=J_n+1}^{\infty} \eta_{n-1,j}\psi_j(x)\right)^2d\nu(x)\right]\\
        & \leq uJ_n^{-2s}E\left[\sum_{j = J_n+1}^{\infty}\left(j^s\eta_{n-1,j}\right)^2\right] \leq uJ_n^{-2s}E\|\eta_{n-1}\|_K^2
    \end{aligned}
\end{equation} 
Continue \eqref{eq:truncate}:
\begin{equation}
\label{eq:conciserecur}
        E\|\eta_n\|_K^2 \leq E\|\eta_{n-1}\|_K^2 - \frac{1}{2}\gamma_nE\|\eta_{n-1}\|_2^2 + 2u\gamma_n J_n^{-2s}E\|\eta_{n-1}\|_K^2      
\end{equation}
Now for each $i$ we have such a recursive relationship for $\|\eta_i\|_K^2$, $\|\eta_{i-1}\|_K^2$ and $\|\eta_{i-1}\|_2^2$. We can sum this from $i=1$ to $n$.
\begin{equation}
    \begin{aligned}
        E\|\eta_n\|_K^2 &\leq E\|\eta_0\|_K^2 - \frac{1}{2}\sum_{i=1}^n \gamma_i E\|\eta_{i-1}\|_2^2 + 2u\sum_{i=1}^n \gamma_iJ_i^{-2s}E\|\eta_{i-1}\|_K^2\\
        \Rightarrow \frac{1}{2}\sum_{i=1}^n \gamma_i E\|\eta_{i-1}\|_2^2 &\leq \|\eta_0\|_K^2 + 2u\sum_{i=1}^n \gamma_iJ_i^{-2s}E\|\eta_{i-1}\|_K^2\\
        \Rightarrow  \frac{\gamma_n}{n}\sum_{i=1}^n E\|\eta_{i-1}\|_2^2 & \leq 2\|\eta_0\|_K^2/n + 4u\sum_{i=1}^n \gamma_iJ_i^{-2s}E\|\eta_{i-1}\|_K^2/n\\
        \stackrel{(1)}{\Rightarrow} E\left\|\frac{1}{n}\sum_{i=1}^n\eta_{i-1}\right\|^2_2 &\leq \frac{2\|f_{\rho}\|_K^2}{n\gamma_n} + \frac{4u\sum_{i=1}^n \gamma_iJ_i^{-2s}E\|\eta_{i-1}\|_K^2}{n\gamma_n}\\
        \Rightarrow \left(E[\|\bar{\eta}_n\|_2^2]\right)^{1/2} &\leq \left(\frac{2\|f_{\rho}\|_K^2+4u\sum_{i=1}^n \gamma_iJ_i^{-2s}E\|\eta_{i-1}\|_K^2}{n\gamma_n}\right)^{1/2}
    \end{aligned}
\end{equation}
Under assumptions A1-A3, we use Lemma \ref{lemma:newboundr} to show that for $J_i\geq i^{\alpha}\log^2 i \vee 1$ for some $\alpha \geq \frac{1}{2s+1}$, then the series $\sum_{i=1}^n \gamma_iJ_i^{-2s}E\|\eta_{i-1}\|_K^2$ is convergent and uniformly bounded, that is, it can be bounded by a constant that does not depend on $n$. Recall that we chose $\gamma_n = \gamma_0 n^{-\frac{1}{2s+1}}$, so we conclude
\begin{equation}
\label{eq:finalboundini}
    \left(E[\|\bar{\eta}_n\|_2^2]\right)^{1/2}  = O\left(n^{-\frac{s}{2s+1}}\right)
\end{equation}
\end{proof}
\subsection{Bound on noise sub-process}
\label{noisesection}
We remind the reader of the definition of our noise sub-process:
\begin{equation}
\begin{aligned}
    \vartheta_0&=0\\
    \vartheta_n&=\left(I-\gamma_{n} T_{X_n,J_n}\right) \vartheta_{n-1}+\gamma_{n} \Xi_{n}
\end{aligned}
\end{equation}
where $\Xi_{n}=\left(Y_{n}-f_{\rho}\left(X_{n}\right)\right) K_{X_n,J_n}$ (we also remind our reader $K_{X_n,J_n}$ is the "truncated kernel" at level $J_n$). Also, $T_{X,J_n}$ and $T_{X_n,J_n}$ are defined as:
\begin{equation}
    \begin{aligned}
        T_{X,J_n}(f)&= \int_{\mathcal{X}} \langle f,K_{x,J_n} \rangle_K K_{x,J_n} d\rho_X(x)\quad\text{population operator}\\
        T_{X_n,J_n}(f) & = \langle f,K_{X_n,J_n} \rangle_K K_{X_n,J_n}\quad\text{random operator}
    \end{aligned}
\end{equation}

\begin{proof}[proof of bound on noise]
We need to define several sequences that are related to $\vartheta_n$ for our technical analysis. The first sequence is:
\begin{equation}
\label{eq:eta0n}
\begin{aligned}
    \eta_0^0&=0 \\
    \eta_n^0&=(I-\gamma_n T_{X,J_n}) \eta_{n-1}^0+\gamma_n \Xi_{n}^0
\end{aligned}
\end{equation}
where $\Xi_n^0=\Xi_n = \left(Y_{n}-f_{\rho}\left(X_{n}\right)\right) K_{X_n,J_n}$.We also define additional sequences, $\eta_n^r$, for each integer $r>0$, by
\begin{equation}
\label{defineetanr}
\begin{aligned}
    \eta_0^r&=0 \\
    \eta_n^r&=(I-\gamma_n T_{X,J_n}) \eta_{n-1}^r+\gamma_n \Xi_{n}^{r}
\end{aligned}
\end{equation}
where \(\Xi_n^r=\left(T_{X,J_n}-T_{X_n,J_n}\right) \eta_{n-1}^{r-1}\). These sequences are easier to analyze than $\vartheta_n$ because the operator in the recursive relationship $(I-\gamma_n T_{X,J_n})$ is a deterministic (population) operator. In contrast the operator in the original $\vartheta_n$ is random. We show in Lemma~\ref{onXinr} that as $r$ increases, the amplitudes of "noise" $\Xi_n^r$ get smaller. Additionally, using the fact that all the sequences $\eta_n^r$ start with $\eta_0^r=0$, we can show that $\eta_n^r$ becomes more concentrated about $0$ for larger $r$. 

We split the noise process $\vartheta_n$ into two parts:
\begin{equation}
    \vartheta_n = \left(\vartheta_n - \sum_{k=0}^r \eta_n^k \right) + \sum_{k=0}^r \eta_n^k
\end{equation}
So its average satisfies
\begin{equation}
    \bar\vartheta_n = \left(\bar\vartheta_n - \sum_{k=0}^r \bar\eta_n^k \right) + \sum_{k=0}^r \bar\eta_n^k
\end{equation}
Here $\bar\eta_n^k$ is the averaged sequence of $\eta_n^k$ (i.e. $\bar\eta_n^k = \frac{1}{n}\sum_{i=1}^n \eta_i^k$). Applying Minkowski’s inequality gives us
\begin{equation}
\label{noisesplit}
 \left(E\|\bar{\vartheta}_n\|_2^{2}\right)^{1/2} 
 \leq \sum_{k=0}^r\left(E\|\bar\eta_n^k\|_2^{2}\right)^{1/2} 
 +\left(E\|\bar\vartheta_n - \sum_{k=0}^r \bar\eta_n^k\|_2^{2}\right)^{1/2} 
\end{equation}
Now we define
\begin{equation}
\label{definealphan}
    \alpha_{n}^{r}=\vartheta_n-\sum_{k=0}^{r} \eta_{n}^{k}
\end{equation}
We will show (Lemma \ref{lemma:exactlyzero}) that for $r\geq n$, we have $\alpha_n^r = \eta_n^r = 0$. We can see the second term in \eqref{noisesplit} is exactly zero when we choose $r\geq n$, that is:
\begin{equation}
    \bar{\vartheta}_{n}-\sum_{k=0}^{r} \bar{\eta}_{n}^{k} = \frac{1}{n}\sum_{i=1}^n \left(\vartheta_i - \sum_{k=0}^r \eta_i^k\right) =  \frac{1}{n}\sum_{i=1}^n \alpha_i^r = 0
\end{equation}

Now our task is just bounding the first term $\sum_{k=0}^r (E\|\bar \eta_n^k \|_2^2)^{1/2}$, we analyze the summation term by term. In Lemma~\ref{lemma:Jnvariance}, we show that:
\begin{equation}
\label{eq:etakbar}
E\left[\left\|\bar{\eta}_{n}^k\right\|_2^{2}\right] = O\left(\gamma_{0}^kR^{2k}  C_{\epsilon}^{2}n^{-\frac{2s}{2s+1}}\right).
\end{equation}
To get this result, we first show in Lemma~\ref{onXinr} that the $\Xi_j^k$ variables are centered and satisfy some moment bounds. With these properties in hand, we prove the above result in Lemma~\ref{lemma:Jnvariance}, with the help of some technical Lemma~\ref{gap1inlemma7} and \ref{gap2inlemma7}

Following \eqref{eq:etakbar}, we have
\begin{equation}
\label{eq:etakbar2}
    \begin{aligned}
        \sum_{k=0}^r \left(E\|\bar{\eta}_n^k\|_2^2\right)^{1/2} 
        &\leq \sum_{k=0}^rC (\gamma_0R^2)^{k/2} C_{\epsilon} n^{-\frac{s}{2s+1}}\\
        & \leq  C C_{\epsilon} n^{-\frac{s}{2s+1}} \sum_{k=0}^{\infty} (\gamma_0 R^2)^{k/2}\\
        & = C C_{\epsilon} n^{-\frac{s}{2s+1}} \frac{1}{1 - \sqrt{\gamma_0 R^2}} = O\left( n^{-\frac{s}{2s+1}}\right)
    \end{aligned}   
\end{equation}
we note that $\gamma_0 R^2 < 1$, which allows us to sum up the geometric series.

Combining the results in \eqref{eq:etakbar2} with $\alpha_n^r = 0$ for $r\geq n$, we can conclude from \eqref{noisesplit} that, for $r\geq n$:
\begin{equation}
\label{final:noise}
 \left(E\left[\left\|\bar \vartheta_n\right\|_2^2\right]\right)^{1/2} 
 \leq \sum_{k=0}^r\left(E\|\bar\eta_n^k\|_2^{2}\right)^{1/2} 
 + 0= O\left(n^{-\frac{s}{2s+1}}\right)
\end{equation}
\end{proof}

\subsection{Combining the bounds}
Plugging the final bounds \eqref{eq:finalboundini} and \eqref{final:noise} back into \eqref{split}, we have the desired result:
\begin{equation}
    E\left[\left\|\bar f_n - f_{\rho}\right\|_2^2\right] = O\left(n^{-\frac{2s}{2s+1}}\right)
\end{equation}

\subsection{Technical Results}
\label{app:technical}
\begin{lemma} 
\label{boundEK} 
There exists $R<\infty$, such that: 
\begin{equation} 
\left\|K_{X_n,J_n}\right\|_K^2 \leq R^2 \end{equation} 
\end{lemma}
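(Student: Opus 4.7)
The plan is to compute $\|K_{X_n,J_n}\|_K^2$ directly from its series representation and bound the resulting sum by a convergent $p$-series.

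First, I would recall from the equivalent characterization of the RKHS (Theorem C.2 in the appendix, restated as Theorem~\ref{theorem:equivalentdef} in the earlier subsection) that for $K(s,t)=\sum_{j=1}^\infty j^{-2s}\psi_j(s)\psi_j(t)$, the inner product on $\mathcal{H}_K$ is
\begin{equation*}
\langle f,g\rangle_K = \sum_{j=1}^\infty j^{2s} a_j b_j, \qquad f=\sum_j a_j\psi_j,\ g=\sum_j b_j\psi_j.
\end{equation*}
Since $K_{X_n,J_n}(\cdot)=\sum_{j=1}^{J_n} j^{-2s}\psi_j(X_n)\psi_j(\cdot)$ lies in the finite-dimensional span of $\{\psi_1,\dots,\psi_{J_n}\}$, its coefficients are $a_j = j^{-2s}\psi_j(X_n)$ for $j\le J_n$ (and $0$ otherwise), so the norm reduces to a finite sum.

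Second, I would apply the formula to obtain
\begin{equation*}
\|K_{X_n,J_n}\|_K^2 \;=\; \sum_{j=1}^{J_n} j^{2s}\bigl(j^{-2s}\psi_j(X_n)\bigr)^2 \;=\; \sum_{j=1}^{J_n} j^{-2s}\,\psi_j(X_n)^2.
\end{equation*}
By assumption A3, the basis functions are uniformly bounded with $\|\psi_j\|_\infty\le M$, so $\psi_j(X_n)^2\le M^2$. Extending the sum to infinity only enlarges it, giving
\begin{equation*}
\|K_{X_n,J_n}\|_K^2 \;\le\; M^2\sum_{j=1}^{\infty} j^{-2s} \;=\; M^2\,\zeta(2s).
\end{equation*}
Since A3 assumes $s>1$, we have $2s>2>1$, so the Riemann $\zeta$-series converges and $R^2 := M^2\zeta(2s) < \infty$. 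The bound is uniform in $X_n$ and $J_n$, as required; no further analysis is needed.

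There is no real obstacle here—the only things to be careful about are (i) using the RKHS inner product for the \emph{untruncated} kernel $K$ (not a kernel associated with $K_{J_n}$) when evaluating the norm, since in the main proof the RKHS $\mathcal{H}_K$ is the ambient space throughout, and (ii) making sure the exponent in the $\zeta$ is $2s$ rather than $s$, which is where the assumption $s>1$ (rather than merely $s>1/2$) becomes the relevant hypothesis for summability.
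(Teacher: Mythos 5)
Your proof is correct and follows essentially the same route as the paper: both compute $\|K_{X_n,J_n}\|_K^2$ via the coefficient formula for the RKHS norm, obtaining $\sum_{j=1}^{J_n} j^{-2s}\psi_j(X_n)^2$, and bound this by $M^2\zeta(2s)$ using A3. (One small quibble: summability of $\zeta(2s)$ only needs $s>1/2$, not $s>1$, so your closing remark slightly overstates which hypothesis is doing the work.)
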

for any $J_n\geq 1,X_n\in\mathcal{X}$.
\begin{proof}
By the definition of $\|\cdot\|_K$ we have:
\begin{equation}
    \begin{aligned}
        \left\|K_{X_n,J_n}\right\|_K^2 &=
        \left\|\sum_{j=1}^{J_n} j^{-2s}\psi_{j}(X_n)\psi_j\right\|_K^2\\
        & =\sum_{j=1}^{J_n}\frac{j^{-4s}\psi_j^2(X_n)}{j^{-2s}}\\
        &\leq M^2 \zeta(2s) =: R^2
        \\
    \end{aligned}
\end{equation}
where $\zeta(\cdot)$ is the Riemann-zeta function.
\end{proof}
We use $R^2$ rather than $R$, in the bounds in this lemma because it simplifies calculation where this lemma is applied

\begin{lemma}
\label{lemma:newboundr}
Under A1-A3, and if we choose $J_i\geq i^{\alpha}\log^2 i \vee 1$ for some $\alpha\geq\frac{1}{2s+1}$, $\gamma_i = \gamma_0 i^{-\frac{1}{2s+1}}$, then there exists a number $C$ that does not depend on $n$ such that
\begin{equation}
    \sum_{i=1}^n \gamma_iJ_i^{-2s}E\|\eta_{i-1}\|_K^2 \leq C
\end{equation}
for all $n$.
\begin{proof}
We first show the expectation of $\|\eta_n\|_K^2$ can be uniformly bounded for $J_n$ that increases fast enough. Recall we have the following recursive relationship \eqref{eq:conciserecur}:
\begin{equation}
\begin{aligned}
        E\|\eta_n\|_K^2 
        &\leq E\|\eta_{n-1}\|_K^2 - \frac{1}{2}\gamma_nE\|\eta_{n-1}\|_2^2 + 2u\gamma_n J_n^{-2s}E\|\eta_{n-1}\|_K^2\\
        & \leq (1+2u\gamma_n J_n^{-2s})E\|\eta_{n-1}\|_K^2\\
        \Rightarrow E\|\eta_n\|_K^2& \leq \prod_{i=1}^n(1+2u\gamma_i J_i^{-2s})\|\eta_0\|_K^2
\end{aligned}
\end{equation}
When we take $J_i \geq i^{\alpha}\log^2 i\vee 1$, for some $\alpha \geq \frac{1}{2s+1}$ we have $\gamma_i J_i^{-2s}\leq \gamma_0\left((i\log^2i)^{-1}~\wedge~1\right)$. Therefore $\prod_{i=1}^n(1+2u\gamma_i J_i^{-2s})$ converges as $n\rightarrow\infty$:
\begin{equation}
    \begin{aligned}
        \prod_{i=1}^n(1+2u\gamma_i J_i^{-2s})
        &\leq \prod_{i=1}^n(1+C(i\log^2i)^{-1}\wedge 1) \\
        & = \exp\left(\log(\prod_{i=1}^n(1+C(i\log^2 i)^{-1}\wedge 1)\right)\\
        & = \exp\left(\sum_{i=1}^n\log((1+C(i\log^2 i)^{-1}\wedge 1)\right)\\
        &\leq \exp\left(\sum_{i=1}^n C(i\log^2i)^{-1}\wedge 1\right)
    \end{aligned}
\end{equation}
We note that the series in the exponent of the last line converges. So at this point we know, $E\|\eta_n\|_K^2$ can be uniformly bounded by $K\|\eta_0\|_K = K\|f_{\rho}\|_K$, with a number $K$ that does not depend on $n$. Now it direct to control the quantity of interest:
\begin{equation}
    \sum_{i=1}^n \gamma_iJ_i^{-2s}E\|\eta_{i-1}\|_K^2 \leq K\|f_{\rho}\|\sum_{i=1}^n\gamma_i J_i^{-2s} < \infty,
\end{equation}
when $J_i \geq i^{\alpha}\log^2 i\vee 1$ with $\alpha \geq \frac{1}{2s+1}$.
\end{proof}
\end{lemma}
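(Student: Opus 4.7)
The plan is to decouple the lemma into two steps: first, establish a uniform (in $n$) bound on $E\|\eta_n\|_K^2$; second, use that uniform bound to reduce the sum of interest to a constant times a convergent deterministic series. Everything we need for step one is already available in the recursion \eqref{eq:conciserecur} derived earlier for the noiseless sub-process; we just have to discard the helpful $-\tfrac{1}{2}\gamma_n E\|\eta_{n-1}\|_2^2$ term (which only makes the right-hand side smaller) and recognise the remaining inequality as a multiplicative one.

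First I would drop the negative term from \eqref{eq:conciserecur} to obtain
\begin{equation}
E\|\eta_n\|_K^2 \;\leq\; \bigl(1 + 2u\gamma_n J_n^{-2s}\bigr)\, E\|\eta_{n-1}\|_K^2,
\end{equation}
and iterate from $n=1$ using $\|\eta_0\|_K^2 = \|f_\rho\|_K^2 \leq Q^2$ (by A3 and Theorem~\ref{theorem:equivalentdef}) to get the telescoping bound $E\|\eta_n\|_K^2 \leq Q^2 \prod_{i=1}^n(1 + 2u\gamma_i J_i^{-2s})$. Applying the elementary inequality $\log(1+x)\leq x$ for $x\geq 0$, the product is controlled by $\exp\bigl(2u\sum_{i=1}^\infty \gamma_i J_i^{-2s}\bigr)$, which is finite as soon as the series $\sum_i \gamma_i J_i^{-2s}$ converges. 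So the whole job reduces to verifying convergence of this series.

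For that, I would plug in the hypothesized rates: with $\gamma_i = \gamma_0\, i^{-1/(2s+1)}$ and $J_i \geq i^{\alpha}\log^2 i$ for $\alpha \geq 1/(2s+1)$, we have
\begin{equation}
\gamma_i J_i^{-2s} \;\leq\; \gamma_0\, i^{-\frac{1}{2s+1} - 2s\alpha}\, (\log i)^{-4s}.
\end{equation}
Taking the worst case $\alpha = 1/(2s+1)$, the exponent on $i$ equals exactly $-1$, so $\gamma_i J_i^{-2s} \lesssim i^{-1}(\log i)^{-4s}$. Since $s>1$ by A3, we have $4s>1$, and the Bertrand-type series $\sum_i i^{-1}(\log i)^{-4s}$ converges. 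This gives a uniform constant $\tilde C$ such that $E\|\eta_{i-1}\|_K^2 \leq \tilde C$ for every $i$, and therefore
\begin{equation}
\sum_{i=1}^n \gamma_i J_i^{-2s}\, E\|\eta_{i-1}\|_K^2 \;\leq\; \tilde C \sum_{i=1}^\infty \gamma_i J_i^{-2s} \;=:\; C \;<\; \infty,
\end{equation}
which is the required uniform bound.

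The main obstacle, and the only nontrivial observation in the argument, is noticing that the $\log^2 i$ factor in the lower bound on $J_i$ is not decorative: without it the exponent calculation above returns exactly $i^{-1}$, which is the boundary of divergence for $\sum 1/i$, and then both the product controlling $E\|\eta_n\|_K^2$ and the sum of interest would blow up. The logarithmic inflation of $J_i$ pushes $\gamma_i J_i^{-2s}$ just barely into summable territory, which in turn is exactly what makes the multiplicative error recursion accumulate only a bounded total. Everything else is routine.
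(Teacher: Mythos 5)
Your proposal is correct and follows essentially the same route as the paper: drop the negative term in \eqref{eq:conciserecur} to obtain the multiplicative recursion, telescope, bound the product via $\log(1+x)\le x$ by $\exp\bigl(2u\sum_i\gamma_iJ_i^{-2s}\bigr)$, and verify that $\gamma_iJ_i^{-2s}\lesssim i^{-1}(\log i)^{-4s}$ is summable at the boundary case $\alpha=\tfrac{1}{2s+1}$ precisely because of the $\log^2 i$ inflation of $J_i$. The only cosmetic difference is that the paper loosens the bound to $\gamma_0\,(i\log^2 i)^{-1}\wedge 1$ before summing, whereas you keep the sharper exponent $(\log i)^{-4s}$; both yield a convergent Bertrand series.
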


\begin{lemma}
\label{onXinr}
Assume A1-A3, for any integer $r,n \geq 0$ we have:
\begin{itemize}
        \item $\Xi_n^r$ is $\mathcal{F}_n$ measurable and $\Xi_n^{r}\in \text{span}\{\psi_1,...,\psi_{J_n}\}$,
    \item $E[\Xi_n^r\ |\ \mathcal{F}_{n-1}] = 0$, and
    \item $E\left[\Xi_{n}^{r} \otimes \Xi_{n}^{r}\right] \preccurlyeq \gamma_{0}^{r}R^{2 r}C_{\epsilon}^2T_{X,J_n}$. 
\end{itemize}
Here $\mathcal{F}_n$ is the $\sigma$-algebra generated by $(X_i, Y_i)_{i=1}^n$.
\end{lemma}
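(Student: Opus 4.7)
The plan is to establish all three assertions simultaneously by induction on $r$.

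\textbf{Base case ($r=0$).} Write $\epsilon_n := Y_n - f_\rho(X_n)$, so $\Xi_n^0 = \epsilon_n K_{X_n,J_n}$. The $\mathcal{F}_n$-measurability and the containment in $\operatorname{span}\{\psi_1,\ldots,\psi_{J_n}\}$ are immediate from the definition of $K_{X_n,J_n}$. Centering uses A1 together with $E[\epsilon_n\mid X_n]=0$, which is the very definition of $f_\rho$. For the covariance, A4 gives $E[\epsilon_n^2\mid X_n]\leq C_\epsilon^2$ in either alternative, so
\[
E[\Xi_n^0 \otimes \Xi_n^0] = E\!\left[E[\epsilon_n^2\mid X_n]\,K_{X_n,J_n}\otimes K_{X_n,J_n}\right] \preccurlyeq C_\epsilon^2\, T_{X,J_n}.
\]

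\textbf{Inductive step.} Suppose the three properties hold at level $r-1$. Unrolling the recursion \eqref{defineetanr} together with the inductive measurability of the $\Xi_j^{r-1}$, a straightforward induction on $j$ shows $\eta_{n-1}^{r-1}$ is $\mathcal{F}_{n-1}$-measurable and sits in $\operatorname{span}\{\psi_1,\ldots,\psi_{J_{n-1}}\}$. Since $T_{X,J_n}$ is deterministic and $T_{X_n,J_n}$ depends only on $X_n$, the measurability/span assertions for $\Xi_n^r=(T_{X,J_n}-T_{X_n,J_n})\eta_{n-1}^{r-1}$ follow. By A1, $X_n$ is independent of $\mathcal{F}_{n-1}$ and $E[T_{X_n,J_n}]=T_{X,J_n}$, so $E[\Xi_n^r\mid\mathcal{F}_{n-1}]=0$. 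For the covariance inequality, condition on $\mathcal{F}_{n-1}$ so that $\eta_{n-1}^{r-1}$ is a fixed vector; the variance is dominated by the second moment and the identity $T_{X_n,J_n}v = v(X_n)K_{X_n,J_n}$ yields
\[
E[\Xi_n^r \otimes \Xi_n^r\mid \mathcal{F}_{n-1}] \preccurlyeq E\!\left[\eta_{n-1}^{r-1}(X_n)^2\,K_{X_n,J_n}\otimes K_{X_n,J_n}\mid \mathcal{F}_{n-1}\right].
\]
The pointwise bound $|v(x)|\leq \|v\|_K\|K_{x,J_n}\|_K\leq R\|v\|_K$ (Cauchy--Schwarz plus Lemma \ref{boundEK}) gives $\eta_{n-1}^{r-1}(X_n)^2\leq R^2\|\eta_{n-1}^{r-1}\|_K^2$, whence
\[
E[\Xi_n^r \otimes \Xi_n^r] \preccurlyeq R^2 E\!\left[\|\eta_{n-1}^{r-1}\|_K^2\right] T_{X,J_n}.
\]

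\textbf{Main obstacle.} The remaining task is the uniform-in-$n$ estimate $E\|\eta_n^{r-1}\|_K^2 \leq \gamma_0^{r} R^{2r-2} C_\epsilon^2$, i.e., extracting one extra factor of $\gamma_0$ per induction level. I would prove this by an auxiliary induction using the $\mathcal{H}_K$-norm expansion of the recursion: the cross term vanishes via the centering established at level $r-1$, and the one-step inequality
\[
\|(I-\gamma_n T_{X,J_n})v\|_K^2 \leq \|v\|_K^2 - \gamma_n\|v\|_{L^2_{\rho_X}}^2,
\]
which is valid once $\gamma_0 R^2\leq 1/2$, supplies a negative drift $-\gamma_n E\|\eta_{n-1}^{r-1}\|_{L^2_{\rho_X}}^2$ that must absorb the noise input $\gamma_n^2 E\|\Xi_n^{r-1}\|_K^2\leq \gamma_n^2\gamma_0^{r-1}R^{2r}C_\epsilon^2$. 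The delicate point is that $\sum\gamma_i^2$ is not summable for $s>1/2$, so a naive telescoping fails; the required gain of $\gamma_0$ per level comes from combining the spectral information on $T_{X,J_n}$ from Lemma \ref{lemma:samejdifferentkernels} with the damping induced by the product operators $\prod_{j=i+1}^{n}(I-\gamma_jT_{X,J_j})$ acting against the eigendecomposition of $T_{X,J_i}$. This spectral analysis is where I expect the real work of the proof to lie.
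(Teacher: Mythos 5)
Your base case, the measurability/span claims, and the centering argument all match the paper's proof, and your reduction of the inductive covariance step to controlling $\eta_{n-1}^{r-1}$ is the right instinct. But the specific reduction you chose breaks the argument. By invoking the pointwise bound $|\eta_{n-1}^{r-1}(X_n)|\leq R\|\eta_{n-1}^{r-1}\|_K$ you arrive at $E[\Xi_n^r\otimes\Xi_n^r]\preccurlyeq R^2\,E\|\eta_{n-1}^{r-1}\|_K^2\,T_{X,J_n}$, which forces you to prove the \emph{scalar} estimate $E\|\eta_{n}^{r-1}\|_K^2\leq\gamma_0^{r}R^{2r-2}C_\epsilon^2$ uniformly in $n$. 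That quantity is the trace of $E[\eta_n^{r-1}\otimes\eta_n^{r-1}]$, and it is not uniformly bounded: already for $r=1$, writing $\eta_n^0=\sum_{k\le n}\gamma_k D_{k+1}^n\Xi_k^0$ with $D_{k+1}^n=\prod_{i=k+1}^n(I-\gamma_iT_{X,J_i})$, the modes with $\lambda_t\gtrsim (n\gamma_n)^{-1}$ (there are order $n^{1/(2s+1)}$ of them) each contribute order $\gamma_0$ to the trace, so $E\|\eta_n^0\|_K^2$ grows roughly like $\gamma_0\,n^{1/(2s+1)}$. The drift inequality you propose cannot rescue this, and you yourself leave the step open ("where I expect the real work of the proof to lie"), so the induction does not close along this route.

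The paper's proof avoids the scalar bound entirely by staying at the operator level. It carries through the induction a second, \emph{operator} hypothesis, $E[\eta_n^r\otimes\eta_n^r]\preccurlyeq\gamma_0^{r+1}R^{2r}C_\epsilon^2 I$, which is strictly weaker than your trace bound and is what is actually true. This is obtained from the telescoping identity $D_{k+1}^n\gamma_kT_{X,J_k}=D_{k+1}^n-D_k^n$, which yields
\begin{equation*}
\sum_{k=1}^n D_{k+1}^n\,\gamma_k^2\,T_{X,J_k}\,(D_{k+1}^n)^*\preccurlyeq\gamma_0\bigl(I-D_1^n(D_2^n)^*\bigr)\preccurlyeq\gamma_0 I,
\end{equation*}
and this is precisely where the extra factor of $\gamma_0$ per level comes from. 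The step you bounded pointwise is instead handled by keeping $E[\eta_{n-1}^{r}\otimes\eta_{n-1}^{r}]$ sandwiched inside the conditional expectation,
\begin{equation*}
E[\Xi_n^{r+1}\otimes\Xi_n^{r+1}]=E\bigl[(T_{X,J_n}-T_{X_n,J_n})\,E[\eta_{n-1}^{r}\otimes\eta_{n-1}^{r}]\,(T_{X,J_n}-T_{X_n,J_n})\bigr]\preccurlyeq\gamma_0^{r+1}R^{2r}C_\epsilon^2\,E\bigl[(T_{X,J_n}-T_{X_n,J_n})^2\bigr],
\end{equation*}
and then using $E[(T_{X_n,J_n})^2]\preccurlyeq R^2T_{X,J_n}$. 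You should restructure your induction to prove the two operator inequalities (for $\Xi_n^r$ and for $\eta_n^r$) jointly, rather than attempting a norm bound on $\eta_n^r$.
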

\textbf{Note}: because $\gamma_0 R^2 <1$ by our choice of $\gamma_0$, the upper bound on $E\left[\Xi_{n}^{r} \otimes \Xi_{n}^{r}\right]$ is smaller for larger $r$ (the third line in Lemma~\ref{onXinr}).
\begin{proof}
We remind our readers the definitions of $\Xi_{n}^r$ and $\eta_{n}^r$ are given around \eqref{defineetanr}. The first claim is direct. We first note that $\eta^0_n$, $\Xi_n^0$ are both $\mathcal{F}_n$-measurable and belong to $\text{span}\{\psi_1,...,\psi_{J_n}\}$. Then we can show the corresponding properties of $\eta_n^r$, $\Xi_n^r$ by induction.

For the second claim, we calculate directly:
\begin{equation}
\begin{aligned}
        E[\Xi_n^r|\mathcal{F}_{n-1}]
        & = E[(T_{X,J_n}-T_{X_n,J_n})\eta_{n-1}^{r-1}|\mathcal{F}_{n-1}]\\
        & = E[(T_{X,J_n}-T_{X_n,J_n})|\mathcal{F}_{n-1}]\eta_{n-1}^{r-1} = 0
\end{aligned}
\end{equation}

Now we show the last claim. We define
\begin{equation}
    D_k^n =(I-\gamma_{n}T_{X,J_n})(I-\gamma_{n-1}T_{X,J_{n-1}})\cdot\cdot\cdot(I-\gamma_{k}T_{X,J_{k}}) = \prod_{i=k}^{n}\left(I-\gamma_{i} T_{X,J_i}\right).
\end{equation}
Note that each of the element in $D_k^n$ is self-adjoint, positive but they in general do not commute. Because $T_{X,J_i}$ is a positive operator, we have $I - \gamma_i T_{X,J_i} \preccurlyeq I$ for our choice of $\gamma_i$. We are also going to use the following relationship in the rest of this proof. Recall that we denote the adjoint of operator $A$ as $A^*$:
\begin{equation}
\begin{aligned}
\sum_{k=1}^{n} D_{k+1}^n \gamma_{k}^{2} T_{X,J_k} \left(D_{k+1}^n\right)^*
&\preccurlyeq \gamma_{0} \sum_{k=1}^{n} D_{k+1}^n \gamma_{k} T_{X,J_k}\left(D_{k+1}^n\right)^*\\
&\stackrel{(1)}{=} \gamma_{0} \sum_{k=1}^{n} D_{k+1}^n\left(D_{k+1}^n\right)^*-D_{k}^n\left(D_{k+1}^n\right)^*\\
& = \gamma_0 \sum_{k=1}^n \left(I-\gamma_{n} T_{X,J_n}\right)\cdot\cdot\cdot \left(I-\gamma_{k+1} T_{X,J_{k+1}}\right)\left(I-\gamma_{k+1} T_{X,J_{k+1}}\right) \cdot\cdot\cdot \left(I-\gamma_{n} T_{X,J_n}\right)\\
&\hspace{30mm}- D_{k}^n\left(D_{k+1}^n\right)^*\\
& \stackrel{(2)}{\preccurlyeq} \gamma_0 \sum_{k=1}^n \left(I-\gamma_{n} T_{X,J_n}\right)\cdot\cdot\cdot \left(I-\gamma_{k+1} T_{X,J_{k+1}}\right) \cdot\cdot\cdot \left(I-\gamma_{n} T_{X,J_n}\right)- D_{k}^n\left(D_{k+1}^n\right)^*\\
& = \gamma_0 \sum_{k=1}^n D_{k+1}^n\left(D_{k+2}^n\right)^*- D_{k}^n\left(D_{k+1}^n\right)^* = \gamma_0 \left(I - D_1^n\left(D_{2}^n\right)^*\right) \preccurlyeq \gamma_0 I
\end{aligned}
\end{equation}
In (1) we used $D_{k+1}^n\gamma_{k} T_{X,J_k} = D_{k+1}^n-D_{k}^n$. In (2) we used 
\begin{equation}
   ABBA^* = AB^{1/2}BB^{1/2}A^*\preccurlyeq ABA 
\end{equation}
for positive, self-adjoint $B\preccurlyeq I$.\\
Now we give an inductive argument (applying induction on $r$):\\
\textbf{Initialisation}: 
When $r=0$, recall that $\Xi_n^0 = (Y_n - f_{\rho}(X_n)) K_{X_n,J_n}$. Thus, we have that
\begin{equation}
\begin{aligned}
    \langle f, E\left[\Xi_{n}^{0} \otimes \Xi_{n}^{0}\right] (f)\rangle_K 
    & = \langle f, E\left[ (Y_n - f_{\rho}(X_n))^2 \langle K_{X_n,J_n}, f\rangle_K K_{X_n,J_n}\right] \rangle_K\\
    & \stackrel{(1)}{\leq} \langle f, C_{\epsilon}^2E\left[ \langle K_{X_n,J_n}, f\rangle_K K_{X_n,J_n} \right]\rangle_K \\
    & = C_{\epsilon}^2  \langle f, T_{X,J_n}f\rangle_K
\end{aligned}
\end{equation}
In (1) we used our noise assumption (A4). So we have 
\begin{equation}
    E\left[\Xi_{n}^{0} \otimes \Xi_{n}^{0}\right] \preccurlyeq C_{\epsilon}T_{X,J_n}
\end{equation}

To perform induction over $r$, we also need a bound on $E\left[\eta_{n}^{0} \otimes \eta_{n}^{0}\right]$ as well. Recall that $\eta_{n}^{0}=\sum_{k=1}^{n} \gamma_{k} D_{k+1}^n  \Xi_{k}^{0}$ as defined in \eqref{eq:eta0n}. Thus we have: 
\begin{equation}
\begin{aligned}
E\left[\eta_{n}^{0} \otimes \eta_{n}^{0}\right]
&=\sum_{k=1}^{n}\sum_{j=1}^n \gamma_{j}\gamma_k D_{j+1}^n  E\left[\Xi_{j}^{0} \otimes \Xi_{k}^{0}\right] \left(D_{k+1}^n\right)^*\\
&\stackrel{(1)}{=}\sum_{k=1}^{n} \gamma_{k}^{2}D_{k+1}^n  E\left[\Xi_{k}^{0} \otimes \Xi_{k}^{0}\right] \left(D_{k+1}^n\right)^* \\
& \preccurlyeq C_{\epsilon} \sum_{k=1}^{n} D_{k+1}^n \gamma_{k}^{2} T_{X,J_k} \left(D_{k+1}^n\right)^* \\
& \preccurlyeq C_{\epsilon} \gamma_{0} I
\end{aligned}
\end{equation}
In (1) the interaction terms vanish because the noise variables $\Xi_j^0,\Xi_k^0$ are mean-zero and independent when $j\neq k$.\\
\textbf{Induction} : If we assume for $r\geq 0$, 
\begin{equation}
  E\left[\Xi_{n}^{r} \otimes \Xi_{n}^{r}\right] \preccurlyeq \gamma_{0}^{r} R^{2 r} C_{\epsilon}  T_{X,J_n} 
\end{equation}
and 
\begin{equation}
  E\left[\eta_{n}^{r} \otimes \eta_{n}^{r}\right] \preccurlyeq\gamma_{0}^{r+1} R^{2 r} C_{\epsilon} I 
\end{equation}
then for $r+1$:
\begin{equation}
\begin{aligned}
E\left[\Xi_{n}^{r+1} \otimes \Xi_{n}^{r+1}\right] 
& \stackrel{(1)}{=} E\left[\left(T_{X,J_n}-T_{X_n,J_n}\right) \eta_{n-1}^{r} \otimes \eta_{n-1}^{r}\left(T_{X,J_n}-T_{X_n,J_n}\right)\right] \\
&=E\left[\left(T_{X,J_n}-T_{X_n,J_n}\right) E\left[\eta_{n-1}^{r} \otimes \eta_{n-1}^{r}\right]\left(T_{X,J_n}-T_{X_n,J_n}\right)\right] \\
& \preccurlyeq \gamma_{0}^{r+1} R^{2 r} C_{\epsilon} E\left[\left(T_{X,J_n}-T_{X_n,J_n}\right)^{2}\right] \\
& = \gamma_{0}^{r+1} R^{2 r} C_{\epsilon}\left(E\left[\left(T_{X_n,J_n}\right)^2\right] - T_{X,J_n}^2\right) \\
& \stackrel{(2)}{\preccurlyeq} \gamma_{0}^{r+1} R^{2 r+2} C_{\epsilon} T_{X,J_n}
\end{aligned}
\end{equation}
Here (1) is the definition of $\Xi_n^{r+1}$. For (2), it is sufficient to show $E\left[\left(T_{X_n,J_n}\right)^2\right]\preccurlyeq R^2 T_{X,J_n}$. This is true because:
\begin{equation}
    \langle f, E\left[\left(T_{X_n,J_n}\right)^2\right] f\rangle_K = E[\|T_{X_n,J_n}(f)\|_K^2] = E[\langle f, K_{X_n,J_n}\rangle_K^2\|\khatxn\|_K^2] \leq R^2 \langle f, T_{X,J_n} f\rangle_K
\end{equation}
Recall that $\eta_{n}^{r+1}=\sum_{k=1}^{n} D_{k+1}^n\gamma_{k} \Xi_{k}^{r+1}$, then
\begin{equation}
\begin{aligned}
E\left[\eta_{n}^{r+1} \otimes \eta_{n}^{r+1}\right] 
& =  E\left[\sum_{k=1}^{n}\gamma_k^2 D_{k+1}^n \Xi_{k}^{r+1} \otimes \Xi_{k}^{r+1} \left(D_{k+1}^n\right)^*\right] \\
& = \sum_{k=1}^{n}\gamma_k^2 D_{k+1}^n E[\Xi_{k}^{r+1} \otimes \Xi_{k}^{r+1}] \left(D_{k+1}^n\right)^*\\
& \preccurlyeq C_{\epsilon} \gamma_{0}^{r+1} R^{2 r} \sum_{k=1}^{n} D_{k+1}^n \gamma_{k} T_{X,J_k} \left(D_{k+1}^n\right)^*\\
& \preccurlyeq C_{\epsilon} \gamma_{0}^{r+2} R^{2 r+2} I
\end{aligned}
\end{equation}
\end{proof}

\begin{lemma}
\label{lemma:Jnvariance}
Under assumptions A1-A3, we have
\begin{equation}
    E\left[\left\|\bar{\eta}_{n}^{r}\right\|_{2}^{2}\right] = O\left(\gamma_0^rR^{2r}C_{\epsilon}^2n^{-\frac{2s}{2s+1}}\right)
\end{equation}
\end{lemma}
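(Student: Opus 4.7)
The plan is to solve the linear recursion for $\eta_n^r$ explicitly, use the martingale structure of $\{\Xi_k^r\}$ to kill cross terms, and then reduce the bound to a variance-type quantity that is essentially independent of $r$, with all the $r$-dependence absorbed into the prefactor $\gamma_0^r R^{2r} C_\epsilon^2$.

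\textbf{Step 1 (unroll the recursion).} Because the operators $T_{X,J_i}$ are deterministic, the recursion for $\eta_n^r$ unfolds as
\begin{equation*}
\eta_n^r \;=\; \sum_{k=1}^n \gamma_k\, D_{k+1}^n\, \Xi_k^r, \qquad D_{k+1}^n \;:=\; \prod_{i=k+1}^n (I-\gamma_i T_{X,J_i}),
\end{equation*}
with the convention $D_{n+1}^n = I$. Averaging and switching the order of summation gives
\begin{equation*}
\bar\eta_n^r \;=\; \frac{1}{n}\sum_{k=1}^n \gamma_k\, \Sigma_{k,n}\, \Xi_k^r, \qquad \Sigma_{k,n} \;:=\; \sum_{i=k}^n D_{k+1}^i.
\end{equation*}

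\textbf{Step 2 (martingale cancellation and operator factorization).} Express the squared $L^2_{\rho_X}$-norm through the RKHS as $\|g\|_2^2 = \langle g, T_X g\rangle_K = \mathrm{tr}(T_X\, g\otimes g)$. By Lemma \ref{onXinr}, $\{\Xi_k^r\}_k$ is a martingale difference sequence adapted to $\{\mathcal{F}_k\}$, and the $\Sigma_{k,n}$ are deterministic; hence the cross terms in $E\|\bar\eta_n^r\|_2^2$ vanish and
\begin{equation*}
E\|\bar\eta_n^r\|_2^2 \;=\; \frac{1}{n^2}\sum_{k=1}^n \gamma_k^2\, \mathrm{tr}\!\left(\Sigma_{k,n}^{*}\, T_X\, \Sigma_{k,n}\, E[\Xi_k^r\otimes \Xi_k^r]\right).
\end{equation*}
Applying the third item of Lemma \ref{onXinr}, $E[\Xi_k^r\otimes \Xi_k^r]\preccurlyeq \gamma_0^r R^{2r} C_\epsilon^2 T_{X,J_k}$, extracts the $r$-dependent prefactor and reduces the problem to bounding
\begin{equation*}
V_n \;:=\; \frac{1}{n^2}\sum_{k=1}^n \gamma_k^2\, \mathrm{tr}\!\left(\Sigma_{k,n}^{*}\, T_X\, \Sigma_{k,n}\, T_{X,J_k}\right) \;=\; O\!\left(n^{-2s/(2s+1)}\right).
\end{equation*}

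\textbf{Step 3 (bound $V_n$ by a spectral argument).} This is where the technical Lemmas \ref{gap1inlemma7} and \ref{gap2inlemma7} enter. Intuitively, by Lemma \ref{maintextlemma} and its refinement Lemma \ref{lemma:samejdifferentkernels}, both $T_X$ and $T_{X,J_k}$ have $j$-th eigenvalue of order $j^{-2s}$ (for $j\le J_k$), so the $j$-th spectral component of $(I-\gamma_i T_{X,J_i})$ contracts by a factor $\sim 1-c\,\gamma_i j^{-2s}$. A standard geometric-sum calculation then shows that the action of $\Sigma_{k,n}$ on the $j$-th mode has magnitude of order $\min\bigl(n-k+1,\; (\gamma_k j^{-2s})^{-1}\bigr)$. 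Summing this squared magnitude against the eigenvalue $j^{-2s}$ of $T_X T_{X,J_k}$ across $j$, and then integrating in $k$ with $\gamma_k = \gamma_0 k^{-1/(2s+1)}$, produces the balanced bias/variance rate $n^{-2s/(2s+1)}$. Combining with Step 2 yields the claim.

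The main obstacle is Step 3: the operators $T_{X,J_i}$ do not commute as $J_i$ varies with $i$, so one cannot simultaneously diagonalize the product defining $\Sigma_{k,n}$. The technical lemmas address this either via operator monotonicity arguments (in the spirit of the telescoping used in the proof of Lemma \ref{onXinr}) or by comparison with the idealized process where $J_i$ is frozen, so that the eigenvalue estimates of Lemma \ref{lemma:samejdifferentkernels} can be applied uniformly in $k$. Crucially, the entire $r$-dependence ends up confined to the prefactor $\gamma_0^r R^{2r}$, which is exactly what allows the subsequent summation in \eqref{noisesplit} to close as a convergent geometric series under the assumption $\gamma_0 R^2 < 1$.
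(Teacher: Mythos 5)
Your proposal is correct and follows essentially the same route as the paper: unroll the linear recursion, use the martingale structure of $\Xi_k^r$ to kill cross terms, extract the $\gamma_0^r R^{2r} C_\epsilon^2$ prefactor via the operator bound $E[\Xi_k^r\otimes\Xi_k^r]\preccurlyeq\gamma_0^r R^{2r}C_\epsilon^2\,T_{X,J_k}$, and reduce to the deterministic spectral sum handled by Lemmas~\ref{gap1inlemma7} and \ref{gap2inlemma7}. The one point worth noting is that the non-commutativity of the $T_{X,J_i}$ is resolved in the paper not by operator monotonicity or by freezing $J_i$, but by the trace--eigenvalue majorization inequality $\operatorname{tr}(A_1\cdots A_m)\leq\sum_t\lambda_t(A_1)\cdots\lambda_t(A_m)$ combined with the uniform-in-$J$ spectral bounds of Lemma~\ref{lemma:samejdifferentkernels}.
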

\begin{proof}
\begin{equation}
\label{eq:variance main eq}
    \begin{aligned}
        n^2E\left[\left\|\bar{\eta}_{n}^{r}\right\|_{2}^{2}\right]
        &=   E\left\| \sum_{j=1}^{n} \sum_{k=1}^{j}\left[\prod_{i=k+1}^{j}\left(I-\gamma_{i} T_{X,J_i}\right)\right] \gamma_{k} \Xi_k^r\right\|_2^{2} \\
    & =  E\left\| \sum_{k=1}^{n} \sum_{j=k}^{n}\left[\prod_{i=k+1}^{j}\left(I-\gamma_{i} T_{X,J_i}\right)\right] \gamma_{k} \Xi_k^r\right\|_2^{2} \\
     & =  \sum_{k=1}^n\gamma_k^2E\left\|  \sum_{j=k}^{n}\left[\prod_{i=k+1}^{j}\left(I-\gamma_{i} T_{X,J_i}\right)\right] \Xi_k^r\right\|_2^{2}\\
     & = \sum_{k=1}^n\gamma_k^2
     E\left\langle\sum_{j=k}^{n}\left[\prod_{i=k+1}^{j}\left(I-\gamma_{i} T_{X,J_i}\right)\right]  \Xi_{k}^r, T_{X,J_n}\sum_{j=k}^{n}\left[\prod_{i=k+1}^{j}\left(I-\gamma_{i} T_{X,J_i}\right)\right]  \Xi_{k}^r\right\rangle_{K}\\
        &  = \sum_{k=1}^{n} \gamma_{k}^{2} E \operatorname{tr}\left( T_{X,J_n} M_{k}^{n} \Xi_{k}^r \otimes \Xi_{k}^r\left(M_{k}^{n}\right)^*\right)\quad\text{where }M_k^n=\sum_{j=k}^{n}\left[\prod_{i=k+1}^{j}\left(I-\gamma_{i} T_{X,J_i}\right)\right]\\
        & = \sum_{k=1}^{n} \gamma_{k}^{2} \sum_{l=k}^n\sum_{m=k}^n\operatorname{tr}\left( T_{X,J_n} \left[\prod_{i=k+1}^l (I - \gamma_i T_{X,J_i})\right] E[\Xi_{k}^r \otimes \Xi_{k}^r]\left[\prod_{i=k+1}^m (I - \gamma_i T_{X,J_i})\right]\right)\\
        & \stackrel{(1)}{\leq} \sum_{k=1}^{n} \gamma_{k}^{2} \sum_{l=k}^n\sum_{m=k}^n\sum_{t=1}^{\infty}\left( \lambda_{n,t}\left[\prod_{i=k+1}^l (1 - \gamma_i \lambda_{i,t})\right] \gamma_0^rR^{2r}C_{\epsilon}^2\lambda_{k,t} \left[\prod_{i=k+1}^m (1 - \gamma_i \lambda_{i,t})\right]\right)\\
        & \stackrel{(2)}{\leq } \gamma_0^rR^{2r}C_{\epsilon}^2\sum_{k=1}^n\gamma_k^2\sum_{t=1}^{J_k}\lambda_{k,t}\lambda_{n,t}\left(\sum_{j=k}^{n}\left[\prod_{i=k+1}^{j}\left(1-\gamma_{i} C\lambda_{k,t}\right)\right]\right)^{2}.\\
    \end{aligned}
\end{equation}
Here we denote the $t$-th eigenvalue of $T_{X,J_k}$ as $\lambda_{k,t}\geq 0$. In (1), we used the trace inequality $\operatorname{tr}(A_1\cdot\cdot\cdot A_m) \leq \sum_{t} \lambda_t(A_1)\cdot\cdot\cdot\lambda_t(A_m)$, where $\lambda_t(A)$ takes the $t$-th largest eigenvalue of $A$ (p.342 of \cite{marshall1979inequalities}). In this step we also used $E\left[\Xi_{n}^{r} \otimes \Xi_{n}^{r}\right] \preccurlyeq \gamma_{0}^{r}R^{2 r}C_{\epsilon}^2T_{X,J_n}$, stated in Lemma~\ref{onXinr}. In step (2) we used the rank of $T_{X,J_k}$ is at most $J_k$. We also apply the uniform bound on the eigenvalues stated in Lemma~\ref{lemma:samejdifferentkernels}. 
We claim we can further extend the inequality as follows, the gap will be provided as an technical lemma in Lemma~\ref{gap1inlemma7}
\begin{equation}
\label{eq:afterpain}
    \begin{aligned}
        (\gamma_0^rR^{2r}C_{\epsilon}^2)^{-1}n^2E\left[\left\|\bar{\eta}_{n}^{r}\right\|_{2}^{2}\right]
        &\leq  \sum_{k=1}^{n} \gamma_{k}^{2} \sum_{t=1}^{J_k} \lambda_{k,t}\lambda_{n,t}\left((n-k)^2 \wedge C\left(\lambda_{k,t}^{-2/(1-\zeta)} +  \lambda_{k,t}^{-2}k^{2\zeta}\right)\right)\\
        &\leq \underbrace{\sum_{k=1}^{n} \gamma_{k}^{2}  \sum_{t=1}^{J_k} \lambda_{k,t}\lambda_{n,t}\left((n-k)^{2} \wedge C\lambda_{k,t}^{-2/(1-\zeta)}\right)}_{S_{1}}\\
        &+\underbrace{\sum_{k=1}^{n} \gamma_{k}^{2}  \sum_{t=1}^{J_k}\lambda_{k,t}\lambda_{n,t} \left((n-k)^{2} \wedge \lambda_{k,t}^{-2}k^{2 \zeta}\right)}_{S_{2}}
        \stackrel{(1)}{=}O\left( n^{2-\frac{2s}{2s+1}}\right)
    \end{aligned}
\end{equation} 
In step $(1)$, we can show that both of $S_1$ and $S_2$ are of order $O\left( n^{2-\frac{2s}{2s+1}}\right)$. These results are provided in lemma \ref{gap2inlemma7}, which concludes our proof.
\end{proof}

\begin{lemma}
\label{gap1inlemma7}
Using the same notation as the last line of \eqref{eq:variance main eq}. We have
\begin{equation}
\sum_{j=k}^{n}\left[\prod_{i=k+1}^{j}\left(1-\gamma_{i} \lambda_{k, t}\right)\right]
\leq (n-k) \wedge C\left(\lambda_{k, t}^{-1 /(1-\zeta)}+\lambda_{k, t}^{-1} k^{\zeta}\right)
\end{equation}
where $\zeta = \frac{1}{2s+1}$.
\begin{proof}
We first bound the inside term:
\begin{equation}
\begin{aligned}
\prod_{i=k+1}^{j}\left(1-\gamma_{i} \lambda_{k,t}\right)
& = \prod_{i=k+1}^{j} \exp\left(\log(1-\gamma_i\lambda_{k,t})\right)\\
&\leq \exp \left( - \sum_{i=k+1}^{j}\left(\gamma_{i} \lambda_{k,t}\right)\right)\\
&\leq \exp \left(-\lambda_{k,t} \int_{u=k+1}^{j+1}\left(\frac{1}{u^{\zeta}} d u\right)\right) \quad \left(\gamma_i = \gamma_0i^{-\zeta}\right)\\
&\leq \exp \left(-\lambda_{k,t} \frac{(j+1)^{1-\zeta}-(k+1)^{1-\zeta}}{1-\zeta}\right)
\end{aligned}
\end{equation}
Then we have
\begin{equation}
\begin{aligned}
\sum_{j=k}^{n} \prod_{i=k+1}^{j}\left(1-\gamma_{i} \lambda_{k,t}\right) 
& \leq \sum_{j=k}^{n} \exp \left(-\lambda_{k,t} \frac{(j+1)^{1-\zeta}-(k+1)^{1-\zeta}}{1-\zeta}\right) \\
& \leq \int_{k}^{n} \exp \left(-\lambda_{k,t} \frac{(u+1)^{1-\zeta}-(k+1)^{1-\zeta}}{1-\zeta}\right) d u
\end{aligned}
\end{equation}
We provide two upper bounds for this quantity: The first one is simply $n-k$, because $\zeta < 1/2$ and $n,k,t \geq 1$.\\

Now we derive the second bound:
\begin{equation}
\label{tobechanged}
\int_k^{n} \exp\left( -\lambda_{k,t} \frac{(u+1)^{1-\zeta}-(k+1)^{1-\zeta}}{1-\zeta} \right)d u=\int_{k+1}^{n+1} \exp \left(-\lambda_{k,t} \frac{u^{1-\zeta}-(k+1)^{1-\zeta}}{1-\zeta} \right)d u
\end{equation}
Now we perform a change of variables, denote $\rho=1-\zeta$:
\begin{equation}
\begin{aligned}
v^{\rho} &=\rho^{-1}\lambda_{k,t}\left( (u)^{\rho}-(k+1)^{\rho}\right) \\
v &= \rho^{-1/\rho}\lambda_{k,t}^{1/\rho}\left((u)^{\rho}-(k+1)^{\rho}\right)^{1 / \rho} \\
d v &=\rho^{-1/\rho}\lambda_{k,t}^{1/\rho}\left(1-\left(\frac{k+1}{u}\right)^{\rho}\right)^{1 / \rho-1} d u\\
    du &= \rho^{1/\rho}\lambda_{k,t}^{-1/\rho} \left(1-\left(\frac{k+1}{u}\right)^{\rho}\right)^{1- 1/\rho}dv\\
    & = \rho^{1/\rho}\lambda_{k,t}^{-1/\rho}\left(1 - \frac{(k+1)^{\rho}}{v^{\rho}\rho\lambda_{k,t}^{-1} + (k+1)^{\rho}}\right)^{1-1/\rho}dv\\
    & = \rho^{1/\rho}\lambda_{k,t}^{-1/\rho}\left(1+\frac{(k+1)^{\rho}}{v^{\rho}\rho\lambda_{k,t}^{-1}}\right)^{1/\rho - 1}dv
\end{aligned}
\end{equation}
Plug this into \eqref{tobechanged}:
\begin{equation}
\begin{aligned}
\sum_{j=k}^{n} \prod_{i=k+1}^{j}\left(1-\gamma_{i} \lambda_{k,t}\right) 
& \leq \int_{0}^{\infty} \rho^{1/\rho}\lambda_{k,t}^{-1/\rho}\left(1+\frac{(k+1)^{\rho}}{v^{\rho}\rho\lambda_{k,t}^{-1}}\right)^{1/\rho - 1}\exp \left(-v^{\rho}\right) d v \\
& \leq 2^{1/\rho - 1}\rho^{1/\rho}\lambda_{k,t}^{-1/\rho} \int_{0}^{\infty}\left(1\vee\frac{(k+1)^{\rho}}{v^{\rho}\rho\lambda_{k,t}^{-1}}\right)^{1/\rho - 1} \exp \left(-v^{\rho}\right) d v \\
& \leq 2^{1/\rho - 1}\rho^{1/\rho}\lambda_{k,t}^{-1/\rho} \left(I_{1} \vee (k+1)^{1-\rho} \lambda_{k,t}^{1/\rho-1}I_2\right) \\
& = 2^{1/\rho - 1}\rho^{1/\rho}I_1\lambda_{k,t}^{-1/\rho} \vee 2^{1 - 2\rho +1/\rho}\rho^{1/\rho}I_2k^{1-\rho}\lambda_{k,t}^{-1}
\end{aligned}
\end{equation}
which concludes the lemma.
\end{proof}
\end{lemma}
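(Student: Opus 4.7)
The plan is to establish the two upper bounds in $(n-k)\wedge C(\lambda_{k,t}^{-1/(1-\zeta)}+\lambda_{k,t}^{-1}k^{\zeta})$ separately. For the trivial bound, I would observe that under the standing assumption $\gamma_0 R^2<1$ combined with the uniform eigenvalue bound $\lambda_{k,t}\le R^2$ from Lemma~\ref{lemma:samejdifferentkernels}, every factor $1-\gamma_i\lambda_{k,t}$ lies in $[0,1]$. Hence each partial product is at most $1$ and summing $n-k+1$ such terms gives $O(n-k)$.

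For the sharper bound, I would first pass from the product to an exponential via $1-x\le e^{-x}$, obtaining
\begin{equation*}
\prod_{i=k+1}^{j}(1-\gamma_i\lambda_{k,t}) \le \exp\!\Bigl(-\lambda_{k,t}\sum_{i=k+1}^{j}\gamma_i\Bigr),
\end{equation*}
and use the monotonicity of $u\mapsto u^{-\zeta}$ to lower bound the partial sum by the integral $\int_{k+1}^{j+1}\gamma_0 u^{-\zeta}\,du = \frac{\gamma_0}{1-\zeta}\bigl((j+1)^{1-\zeta}-(k+1)^{1-\zeta}\bigr)$. Summing over $j$ and comparing to an integral extended to $[k+1,\infty)$ yields
\begin{equation*}
\sum_{j=k}^{n}\prod_{i=k+1}^{j}(1-\gamma_i\lambda_{k,t}) \le \int_{k+1}^{\infty}\exp\!\Bigl(-\tfrac{\gamma_0\lambda_{k,t}}{1-\zeta}\bigl(w^{1-\zeta}-(k+1)^{1-\zeta}\bigr)\Bigr)\,dw.
\end{equation*}

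The core step is then the substitution $v^{\rho}=\rho^{-1}\gamma_0\lambda_{k,t}\bigl(w^{\rho}-(k+1)^{\rho}\bigr)$ with $\rho:=1-\zeta\in(0,1)$. A direct differentiation gives $dw=\rho^{1/\rho}(\gamma_0\lambda_{k,t})^{-1/\rho}\bigl(1+\tfrac{(k+1)^{\rho}}{\rho(\gamma_0\lambda_{k,t})^{-1}v^{\rho}}\bigr)^{1/\rho-1}dv$, so the integral becomes $\rho^{1/\rho}(\gamma_0\lambda_{k,t})^{-1/\rho}\int_0^{\infty}\bigl(1+B/v^{\rho}\bigr)^{1/\rho-1}e^{-v^{\rho}}dv$ for $B:=(k+1)^{\rho}\rho^{-1}\gamma_0\lambda_{k,t}$. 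Applying $(a+b)^c\le 2^{c}(a\vee b)^c$ with $c=1/\rho-1>0$ and splitting according to whether $1$ or $B/v^{\rho}$ dominates, the first region yields a constant multiple of $I_1:=\int_0^{\infty}e^{-v^{\rho}}dv<\infty$, contributing order $\lambda_{k,t}^{-1/\rho}$; the second region pulls out $B^{1/\rho-1}\propto (k+1)^{1-\rho}\lambda_{k,t}^{1/\rho-1}$, leaving the finite integral $I_2:=\int_0^{\infty}v^{-(1-\rho)}e^{-v^{\rho}}dv$ (integrable at $0$ since $1-\rho<1$, and at $\infty$ by the exponential decay), contributing order $(k+1)^{\zeta}\lambda_{k,t}^{-1}$. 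Using $\rho=1-\zeta$ and $1/\rho=1/(1-\zeta)$ and absorbing all numerical factors into a single $C$ completes the bound.

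The main obstacle is the substitution step: the exponent $1/\rho-1$ is positive while the accompanying factor $\bigl(1-((k+1)/w)^{\rho}\bigr)^{1-1/\rho}$ carries an integrable singularity at $w=k+1$, and it is precisely this singularity that forces the two-term structure ($\lambda_{k,t}^{-1/(1-\zeta)}$ vs.\ $k^{\zeta}\lambda_{k,t}^{-1}$) of the final bound. Verifying the finiteness of $I_1$ and $I_2$ is the only genuine analytic input and relies entirely on $\rho\in(0,1)$, which is guaranteed by $\zeta=1/(2s+1)\in(0,1)$ under the standing assumption $s>1$.
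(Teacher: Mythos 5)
Your proposal is correct and follows essentially the same route as the paper's proof: the trivial $O(n-k)$ bound, then $1-x\le e^{-x}$ with an integral comparison, the substitution $v^{\rho}\propto\lambda_{k,t}\bigl(w^{\rho}-(k+1)^{\rho}\bigr)$, and the split $(1+B/v^{\rho})^{1/\rho-1}\le 2^{1/\rho-1}\bigl(1\vee B/v^{\rho}\bigr)^{1/\rho-1}$ producing the two terms $\lambda_{k,t}^{-1/(1-\zeta)}$ and $k^{\zeta}\lambda_{k,t}^{-1}$. Your explicit tracking of $\gamma_0$, extension of the integral to $[k+1,\infty)$, and verification that $I_1$ and $I_2$ are finite are only cosmetic refinements of the same argument.
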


\begin{lemma}
\label{gap2inlemma7}
For both $S_1$ and $S_2$ in \eqref{eq:afterpain}, we have
\begin{equation}
    S_i = O\left( n^{2-\frac{2s}{2s+1}}\right)\quad i=1,2
\end{equation}
\end{lemma}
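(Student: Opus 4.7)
The plan is to evaluate each inner $t$-sum by exploiting the uniform eigenvalue estimate from Lemma~\ref{lemma:samejdifferentkernels}, namely $\lambda_{k,t} \asymp t^{-2s}$ for all $1 \leq t \leq J_k$, and then carry out the outer sum over $k$. Write $\zeta = 1/(2s+1)$, so $1-\zeta = 2s/(2s+1)$ and $-2/(1-\zeta) = -(2s+1)/s$. With these substitutions the two terms entering the minimum in $S_1$ read $(n-k)^2$ and $C\lambda_{k,t}^{-2/(1-\zeta)} \asymp t^{2(2s+1)}$, while those in $S_2$ read $(n-k)^2$ and $\lambda_{k,t}^{-2}k^{2\zeta} \asymp t^{4s}k^{2\zeta}$.

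For $S_1$, the crossover index where the two arguments of the minimum coincide is $t^\ast \asymp (n-k)^{1/(2s+1)}$. On the range $t \leq t^\ast$ the bound $\lambda_{k,t}\lambda_{n,t}\cdot t^{2(2s+1)} \asymp t^{-4s}\cdot t^{2(2s+1)} = t^2$ sums to $O((t^\ast)^3) = O((n-k)^{3/(2s+1)})$, and on $t > t^\ast$ the tail $\sum_{t>t^\ast} t^{-4s} \asymp (t^\ast)^{1-4s}$ (convergent since $s > 1$) times $(n-k)^2$ yields the same order. Thus the inner sum is $O((n-k)^{3/(2s+1)})$, and
\begin{equation}
S_1 = O\!\left(\sum_{k=1}^n k^{-2\zeta}(n-k)^{3/(2s+1)}\right) = O\!\left(n^{1-2\zeta+3/(2s+1)}\right) = O\!\left(n^{1+1/(2s+1)}\right),
\end{equation}
where the middle step rescales $k = nx$ and recognizes the resulting integral as a (finite) Beta-type integral, finiteness requiring only $2\zeta < 1$, which holds for $s > 1/2$. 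Since $1 + 1/(2s+1) = 2 - 2s/(2s+1)$, this is the desired rate.

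For $S_2$, the crossover is now $t^{\ast\ast} \asymp ((n-k)/k^\zeta)^{1/(2s)}$. On $t \leq t^{\ast\ast}$ the summand $\lambda_{k,t}\lambda_{n,t}\cdot t^{4s} k^{2\zeta} \asymp k^{2\zeta}$ gives a total of $O(t^{\ast\ast}\,k^{2\zeta}) = O((n-k)^{1/(2s)} k^{\zeta(4s-1)/(2s)})$, and on $t > t^{\ast\ast}$ the tail $\sum_{t>t^{\ast\ast}} t^{-4s}\cdot(n-k)^2$ yields the same order after simplification. Summing over $k$ and collecting exponents,
\begin{equation}
S_2 = O\!\left(\sum_{k=1}^n k^{-2\zeta+\zeta(4s-1)/(2s)} (n-k)^{1/(2s)}\right) = O\!\left(n^{1+(1-\zeta)/(2s)}\right) = O\!\left(n^{1+1/(2s+1)}\right),
\end{equation}
again matching the claim.

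The main obstacle is the bookkeeping: each of $S_1,S_2$ contains three nested scales (the $t$-cutoff, the power in $(n-k)$, and the power in $k$), and one must verify that in both regions of each minimum the resulting bound is the same polynomial in $(n-k)$, then that the outer exponents $-2\zeta$ (for $S_1$) and $-\zeta/(2s)$ (for $S_2$) lie in $(-1,0)$ so that the Beta integral converges at $k=0$ after rescaling. All such inequalities are implied by $s > 1$ from A3, and no further assumptions are needed.
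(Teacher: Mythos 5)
Your proposal is correct and follows essentially the same route as the paper: substitute the uniform eigenvalue bound $\lambda_{k,t}\asymp t^{-2s}$ from Lemma~\ref{lemma:samejdifferentkernels}, split each inner $t$-sum at the crossover of the two arguments of the minimum (your $t^\ast$ and $t^{\ast\ast}$ coincide with the paper's $(n-k)^{\rho/2s}$ and $(n-k)^{1/2s}/k^{\zeta/2s}$), verify both pieces give the same order, and finish the outer $k$-sum by a Beta-integral comparison. The exponent arithmetic checks out and matches the paper's line by line.
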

\begin{proof}
First we derive the bound for $S_1$, denote $\rho = 1-\zeta$:
\begin{equation}
\begin{aligned}
S_{1} & = \sum_{k=1}^{n} \gamma_{k}^{2}  \sum_{t=1}^{J_k} \lambda_{k,t}\lambda_{n,t}\left((n-k)^{2} \wedge C\lambda_{k,t}^{-2/\rho}\right)\\
& \stackrel{(1)}{\leq} C\sum_{k=1}^{n} \gamma_{k}^{2}  \sum_{t=1}^{\infty} t^{-4s}\left((n-k)^{2} \wedge Ct^{4s/\rho}\right)\\
& = C\sum_{k=1}^{n} \gamma_{k}^{2}\left( \sum_{t=1}^{(n-k)^{\rho/2s}} t^{4s(1/\rho - 1)} + (n-k)^2\sum_{t=(n-k)^{\rho/2s}}^{\infty} t^{-4s}\right)\\
& \leq C\sum_{k=1}^n \gamma_k^2\left( (n-k)^{2-2\rho+\rho/2s} + (n-k)^2 (n-k)^{(-4s+1)\rho/2s}\right)\\
& \leq C\sum_{k=1}^n \gamma_k^2 (n-k)^{3\zeta} = C\sum_{k=1}^n k^{-2\zeta} (n-k)^{3\zeta}\\
                &= C \sum_{k=1}^{n}\left(\frac{n}{k}-1\right)^{3\zeta} k^{\zeta} =n^{\zeta} \sum_{k=1}^{n}\left(\frac{1}{k / n}-1\right)^{3\zeta}\left(\frac{k}{n}\right)^{\zeta}\\
&=C  n^{1+\zeta}\left(\frac{1}{n} \sum_{k=1}^{n}\left(\frac{1}{k / n}-1\right)^{3\zeta}\left(\frac{k}{n}\right)^{\zeta}\right)\\
&=C n^{1+\zeta}\left(\frac{1}{n} \sum_{k=1}^{n}\left(\frac{1}{k / n}-1\right)^{2 \zeta}\left(1-\frac{k}{n}\right)^{\zeta}\right)
\end{aligned}
\end{equation}
In (1) we used the bound for $\lambda_{k,t}$ and $\lambda_{n,t}$ proved in Lemma~\ref{lemma:samejdifferentkernels}. Next, we use
\begin{equation}
\int_{0}^{1}\left(\frac{1}{x}-1\right)^{2 \zeta}(1-x)^{\zeta} d x \leq \int_{0}^{1}\left(\frac{1}{x}-1\right)^{2 \zeta} d x <\infty
\end{equation}
So for $S_1$ we conclude 
\begin{equation}
S_{1} \leq Cn^{1+\zeta} = O(n^{2-\frac{2s}{2s+1}})
\end{equation}
Now we bound $S_2$:
\begin{equation}
\begin{aligned}
S_{2} &\leq C\sum_{k=1}^{n} \gamma_{k}^{2}  \sum_{t=1}^{\infty} \left(t^{-4s}(n-k)^{2} \wedge k^{2 \zeta}\right)\\
& \leq  C\sum_{k=1}^{n} \gamma_{k}^{2} \left(\sum_{t=1}^{(n-k)^{\frac{1}{2s}}/k^{\frac{\zeta}{2s}}} k^{2 \zeta}+\sum_{t=(n-k)^{\frac{1}{2s}}/k^{\frac{\zeta}{2s}}}^{\infty} t^{-4s}(n-k)^{2}\right)\\
&\leq  C\sum_{k=1}^{n} \gamma_{k}^{2} \left(k^{2 \zeta} \sum_{t=1}^{(n-k)^{\frac{1}{2s}}/k^{\frac{\zeta}{2s}}} 1+(n-k)^{2} \sum_{t=(n-k)^{\frac{1}{2s}}/k^{\frac{\zeta}{2s}}}^{\infty} t^{-4s}\right)\\
&\leq  C\sum_{k=1}^{n} \gamma_{k}^{2} \left(k^{2 \zeta} \frac{(n-k)^{\frac{1}{2s}}}{k^{\frac{\zeta}{2s}}}+(n-k)^{2}\left(\frac{(n-k)^{\frac{1}{2s}}}{k^{\frac{\zeta}{2s}}}\right)^{1-4s}\right)\\
&=C\sum_{k=1}^{n} \gamma_{k}^{2} \left(k^{2 \zeta-\frac{\zeta}{2s}}(n-k)^{\frac{1}{2s}}+(n-k)^{\frac{1}{2s}} k^{\frac{\zeta}{2s}(4s-1)}\right)\\
&=C \sum_{k=1}^{n} \frac{1}{k^{2 \zeta}}(n-k)^{\frac{1}{2s}} k^{\frac{\zeta}{2s}(4s-1)}
=C\sum_{k=1}^{n} k^{-\frac{\zeta}{2s}}(n-k)^{\frac{1}{2s}}\\
&=C n^{\left(1-\frac{\zeta}{2s}+\frac{1}{2s}\right)} \left(\frac{1}{n} \sum_{k=1}^{n}\left(\frac{k}{n}\right)^{-\frac{\zeta}{2s}}\left(1-\frac{k}{n}\right)^{\frac{1}{2s}}\right)\\
&\stackrel{(1)}{\leq} C n^{\left(1+\frac{1-\zeta}{2s}\right)} = O\left( n^{2-\frac{2s}{2s+1}}\right),
\end{aligned}
\end{equation}
in (1) we use
\begin{equation}
\begin{aligned}
        \int_0^1 x^{-\zeta/2s}(1-x)^{1/(2s)} dx &\leq \int_0^1 x^{-\zeta/(2s)} dx \\
        & = \int_{1}^{\infty} u^{\zeta/(2s) - 2}du <\infty
\end{aligned}
\end{equation}
\end{proof}

\begin{lemma}
\label{lemma:exactlyzero}
Let $\eta_n^r$ be the sequences defined in \eqref{defineetanr}, then for any $r\geq n$ we have \begin{equation}
    \eta_n^r = 0.
\end{equation}
As a further consequence, for $\alpha_n^r$ defined in \eqref{definealphan}, for any $r\geq n$, we have
\begin{equation}
    \alpha_n^r = 0.
\end{equation}
\end{lemma}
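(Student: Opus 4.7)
The proof will proceed by a double induction, first establishing $\eta_n^r=0$ for $r\geq n$, then deducing $\alpha_n^r=0$ for $r\geq n$ via a telescoping recursion.

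\textbf{Step 1: induction on $n$ to show $\eta_n^r=0$ for $r\geq n$.} The base case $n=0$ is immediate from the initialization $\eta_0^r=0$ for every $r\geq 0$. For the inductive step, fix $n\geq 1$ and $r\geq n$. The defining recursion \eqref{defineetanr} gives
\begin{equation*}
  \eta_n^r=(I-\gamma_n T_{X,J_n})\eta_{n-1}^r+\gamma_n(T_{X,J_n}-T_{X_n,J_n})\eta_{n-1}^{r-1}.
\end{equation*}
Since $r\geq n$ implies $r\geq n-1$, the inductive hypothesis applied at index $n-1$ yields $\eta_{n-1}^r=0$. Moreover $r-1\geq n-1$, so the same hypothesis gives $\eta_{n-1}^{r-1}=0$. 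Both driving terms vanish, hence $\eta_n^r=0$, completing the induction.

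\textbf{Step 2: recursion for $\alpha_n^r$ when $r\geq n$.} Using the recursions for $\vartheta_n$ and each $\eta_n^k$, together with $\Xi_n^0=\Xi_n$ and $\sum_{k=1}^r\Xi_n^k=(T_{X,J_n}-T_{X_n,J_n})\sum_{k=0}^{r-1}\eta_{n-1}^k$, a direct expansion gives
\begin{equation*}
  \alpha_n^r=(I-\gamma_n T_{X_n,J_n})\vartheta_{n-1}-\sum_{k=0}^r\eta_{n-1}^k+\gamma_n T_{X,J_n}\eta_{n-1}^r+\gamma_n T_{X_n,J_n}\sum_{k=0}^{r-1}\eta_{n-1}^k.
\end{equation*}
When $r\geq n$, Step 1 forces $\eta_{n-1}^r=0$, so $\sum_{k=0}^r\eta_{n-1}^k=\sum_{k=0}^{r-1}\eta_{n-1}^k$, and the identity collapses to
\begin{equation*}
  \alpha_n^r=(I-\gamma_n T_{X_n,J_n})\bigl(\vartheta_{n-1}-\textstyle\sum_{k=0}^{r-1}\eta_{n-1}^k\bigr)=(I-\gamma_n T_{X_n,J_n})\alpha_{n-1}^{r-1}.
\end{equation*}
Crucially the new exponent satisfies $r-1\geq n-1$, so we may iterate this identity.

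\textbf{Step 3: unroll the recursion.} Iterating $n$ times produces
\begin{equation*}
  \alpha_n^r=\Bigl(\prod_{k=1}^n(I-\gamma_k T_{X_k,J_k})\Bigr)\alpha_0^{r-n},
\end{equation*}
and by definition $\alpha_0^{r-n}=\vartheta_0-\sum_{k=0}^{r-n}\eta_0^k=0$, since both $\vartheta_0$ and each $\eta_0^k$ are zero. Hence $\alpha_n^r=0$ whenever $r\geq n$.

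\textbf{Anticipated difficulty.} Step 1 is a clean induction, and Step 3 is routine once Step 2 is in place. The only mildly delicate point is the bookkeeping in Step 2: one has to line up the index shifts between $\Xi_n^k$ and $\eta_{n-1}^{k-1}$, recognize that the sum of $\Xi_n^k$ for $k\geq 1$ telescopes into $(T_{X,J_n}-T_{X_n,J_n})\sum_{k=0}^{r-1}\eta_{n-1}^k$, and then observe that the surviving $T_{X,J_n}$-terms cancel exactly. Everything else is straightforward.
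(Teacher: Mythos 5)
Your proof is correct and follows essentially the same route as the paper's: an induction on $n$ giving $\eta_n^r=0$ for $r\ge n$, followed by an algebraic recursion expressing $\alpha_n^r$ through $\alpha_{n-1}^{\cdot}$ whose driving terms vanish. The only cosmetic difference is that the paper derives the identity $\alpha_n^r=(I-\gamma_n T_{X_n,J_n})\alpha_{n-1}^{r}+\gamma_n\Xi_n^{r+1}$, valid for all $r$, and then inducts on $n$, whereas you specialize to $r\ge n$, where $\eta_{n-1}^r=0$ forces $\Xi_n^{r+1}=0$ and $\alpha_{n-1}^r=\alpha_{n-1}^{r-1}$, so your recursion $\alpha_n^r=(I-\gamma_n T_{X_n,J_n})\alpha_{n-1}^{r-1}$ coincides with the paper's on that range.
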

\begin{proof}
We prove both results by induction (over $n$). We recall the definition of $\Xi_k^r$, for $n,r\geq 1$:
\begin{equation}
    \Xi_n^r = (T_{X,J_n} - T_{X_n,J_n})\eta_{n-1}^{r-1}
\end{equation}
Let's first show $\eta_n^r = 0$ for any $r\geq n$. 

When $n=0$, by definition for any $r\geq 0$, $\eta_0^r = 0$.

Now assume for $k$ and any $r\geq k$ we have $\eta_k^r = 0$, then for any $r\geq k+1$
\begin{equation}
    \begin{aligned}
    \eta_{k+1}^r & = (I-\gamma_{k+1}T_{X,J_{k+1}})\eta_k^r + \gamma_{k+1}\Xi_{k+1}^r\\
    & = 0 + \gamma_{k+1} \left(T_{X,J_{k+1}} - T_{X_{k+1},J_{k+1}}\right) \eta_{k}^{r-1}\\
    &  = 0 + 0
    \end{aligned}    
\end{equation}
This shows that $\eta_{k+1}^r = 0$ for any $r\geq k+1$.

Now we prove the second part. Here we need to use the following recursive relationship of $\alpha_n^r$ (proof is postponed later):
\begin{equation}
\label{eq:alpharecur}
    \alpha_{n}^{r}=\left(I-\gamma_{n} T_{X_n,J_n}\right) \alpha_{n-1}^{r}+\gamma_{n} \Xi_{n}^{r+1}
\end{equation}
When $n = 0$, by definition $\vartheta_0 = \sum_{k=0}^r\eta_0^k$ for any $r\geq 0$. Therefore $\alpha_0^r = 0$ for any $r\geq 0$.

Then assume for $k$ we have $\forall r\geq k$, $\alpha_k^r = 0$, then for $r \geq k+1$
\begin{equation}
\begin{aligned}
    \alpha_{k+1}^{r}
    &=\left(I-\gamma_{k+1} T_{X_{k+1},J_{k+1}}\right) \alpha_k^{r}+\gamma_{k+1} \Xi_{k+1}^{r+1}\\
    & = 0 + \gamma_{k+1} \left(T_{X,J_{k+1}} - T_{X_{k+1},J_{k+1}}\right) \eta_k^r \\
    & = 0 + 0
\end{aligned}
\end{equation}
Now we just need to verify the claimed recursive formula \eqref{eq:alpharecur}.
\begin{equation}
    \begin{aligned}
        (I-\gamma_{n} &T_{X_n,J_n}) \alpha_{n-1}^{r}+\gamma_{n} \Xi_{n}^{r+1} = \left(I-\gamma_{n} T_{X_n,J_n}\right) \alpha_{n-1}^{r}+\gamma_{n} (T_{X,J_n} - T_{X_n,J_n})\eta_{n-1}^r\\
        & = \left(I-\gamma_{n} T_{X_n,J_n}\right)\vartheta_{n-1} - \sum_{k=0}^r(I - \gamma_nT_{X_n,J_n})\eta_{n-1}^k + \gamma_{n} (T_{X,J_n} - T_{X_n,J_n})\eta_{n-1}^r\\
        & = \vartheta_{n} - \gamma_n\Xi_n - \sum_{k=0}^r(I - \gamma_nT_{X_n,J_n} + \gamma_nT_{X,J_n} - \gamma_nT_{X,J_n})\eta_{n-1}^k \\
        &\quad + \gamma_{n} (T_{X,J_n} - T_{X_n,J_n})\eta_{n-1}^r\\
        & = \vartheta_{n} - \gamma_n\Xi_n - \sum_{k=0}^r(I - \gamma_nT_{X,J_n})\eta_{n-1}^k
        - \sum_{k=0}^r \gamma_n\Xi_n^{k+1} +\gamma_n\Xi_{n}^{r+1}\\
        & = \vartheta_n - \sum_{k=0}^r\eta_n^k = \alpha_n^r
    \end{aligned}
\end{equation}
\end{proof}

\section{Proof of Theorem~6.3}
In this section we will show Sieve-SGD achieve a near-optimal convergence rate under the parameter regime specified in Theorem~\ref{maintheorem_fixedJn} in the main text. The proof is similar to that of Theorem~\ref{maintheorem}. But in the section, we need to consider the RKHSs associated with kernels
\begin{equation}
    K_{J_n}(s,t) =  \sum_{j = 1}^{J_n} j^{-2\omega} \psi_j(s)\psi_j(t),\quad \text{with }J_n = \lfloor n^{\frac{1}{2s+1}}\log^2 n\rfloor, \omega \in\left(\frac{1}{2}, s\right).
\end{equation}
To clarify, our reader should treat $\omega$ as a fixed value and $J_n$ is a determinisic sequence that increases with $n$. The aforementioned series of RKHSs are subspaces of the RKHS (denoted as $\mathcal{H}_K$) spanned by the kernel
\begin{equation}
    K(s,t) = \sum_{j = 1}^{\infty} j^{-2\omega} \psi_j(s)\psi_j(t),
\end{equation}
equipped with the same inner product
\begin{equation}
    \langle f, g \rangle_{K} = \sum_{j = 1}^{\infty} j^{2\omega} \langle f, \psi_j\rangle_{L^2_{\nu}} \langle g,\psi_j \rangle_{L^2_{\nu}}.
\end{equation}
Note that, the above inner product no longer have a direct correspondence with our ellipsoid assumptions. 
\subsection{Separation of the error}
Similar to section~\ref{subsection:separation of error}, we consider the following stochastic sequences. The first one is the ``total deviation" sequence $\Delta_n$:
\begin{equation}
\begin{aligned}
    \Delta_0 & =-f_{\rho} \\
    \Delta_{n} & =\left(I-\gamma_{n} T_{X_{n}, J_{n}}\right) \Delta_{n-1}+\gamma_{n} \Xi_{n},
\end{aligned}
\end{equation}
where
\begin{equation}
    \begin{aligned}
&T_{X_{n}, J_{n}}(f)=f\left(X_{n}\right) K_{X_{n}, J_{n}}  = f\left(X_{n}\right) \left(\sum_{j=1}^{J_n} j^{-2\omega} \psi_j(X_n)\psi_j\right)\\
&\Xi_{n}=\left(Y_{n}-f_{\rho}\left(X_{n}\right)\right) K_{X_{n}, J_{n}} = \left(Y_{n}-f_{\rho}\left(X_{n}\right)\right)\left(\sum_{j=1}^{J_n} j^{-2\omega} \psi_j(X_n)\psi_j\right).
\end{aligned}
\end{equation}
The average of $\Delta_n$ is the differene between Sieve-SGD and $f_{\rho}$:
\begin{equation}
    \bar{\Delta}_{n}=\frac{1}{n} \sum_{j=1}^{n} \Delta_{j}=\bar{f}_{n}-f_{\rho}
\end{equation}
Similarly, we decompose the $\Delta_n$ into two parts $\Delta_{n}=\eta_{n}+\vartheta_{n}$, where
\begin{equation}
    \begin{aligned}
&\eta_{0}=-f_{\rho} \\
&\eta_{n}=\left(I-\gamma_{n} T_{X_{n}, J_{n}}\right) \eta_{n-1},
\end{aligned}
\end{equation}
and
\begin{equation}
     \begin{aligned}
&\vartheta_{0}=0 \\
&\vartheta_{n}=\left(I-\gamma_{n} T_{X_{n}, J_{n}}\right) \vartheta_{n-1}+\gamma_{n} \Xi_{n}
\end{aligned}
\end{equation}
We give bounds on $E\left[\left\|\bar{\eta}_{n}\right\|_{2}^{2}\right]$ and $E\left[\left\|\bar{\vartheta}_{n}\right\|_{2}^{2}\right]$ separately and combine them to get one for $E\left[\left\|\bar{\Delta}_{n}\right\|_{2}^{2}\right]$. 

\subsection{Dound on initial condition sub-process.} 
\begin{proof}[Poof of bound on initial value]
The proof formally ensembles section~\ref{initialsection} in a line-by-line fashion. But as we mentioned, the meanings of $K_{X_n,J_n}$ and RKHS inner product here are different. Specifically, to ensure $\|\eta_0\|_K^2 = \sum_{j=1}^{\infty} \left(j^{\omega}\langle f_{\rho}, \psi_j\rangle_{L^2_{\nu}}\right)^2$ (or in general $\|\eta_n\|_K^2$) finite, we need $\omega \leq s$. Define $R^2 = M^2\zeta(2\omega)$, it is also direct to verify that $\gamma_n\|K_{X_n,J_n}\|_K^2 \leq \gamma_0 R^2 < 1$ by our choice of $\gamma_n$. The conclusion of this part is:
\begin{equation}
    \left(E\left[\left\|\bar{\eta}_{n}\right\|_{2}^{2}\right]\right)^{1 / 2}= \left(E\left[\left\|\frac{1}{n}\sum_{i=1}^n \eta_i\right\|_{2}^{2}\right]\right)^{1 / 2}= O\left(n^{-\frac{s}{2 s+1}}\right)
\end{equation}
\end{proof}

\subsection{Bound on noise sub-process}
The basic structure of proof is similar to the corresponding part of Theorem~\ref{maintheorem}. But the details are different: In Lemma~\ref{lemma:Jnvariance}, we used the fact that $t_j = j^{-2s}$ decreases quickly enough to control the magnitude of the noise; however, here we will leverage the finiteness of operators to give a different (and technically slightly simpler) bound, which is unique to sieve-type SGD.

\begin{proof}[proof of bound on noise]
We still need the following working sequences to facilitate the analysis:
\begin{equation}
    \begin{aligned}
&\eta_{0}^{0}=0 \\
&\eta_{n}^{0}=\left(I-\gamma_{n} T_{X, J_{n}}\right) \eta_{n-1}^{0}+\gamma_{n} \Xi_{n}^{0}
\end{aligned}
\end{equation}
where $\Xi_{n}^{0}=\Xi_{n}=\left(Y_{n}-f_{\rho}\left(X_{n}\right)\right) K_{X_{n}, J_{n}}$ and for $f\in \mathcal{H}_{K}$,
\begin{equation}
    T_{X, J_{n}}(f)=\int_{\mathcal{X}}\left\langle f, K_{x, J_{n}}\right\rangle_{K} K_{x, J_{n}} d \rho_{X}(x) = \int_{\mathcal{X}}f(x) \left(\sum_{j=1}^{J_n}j^{-2\omega}\psi_j(x)\psi_j\right) d \rho_{X}(x)
\end{equation}
For each $r > 0$, we define
\begin{equation}
    \begin{aligned}
&\eta_{0}^{r}=0 \\
&\eta_{n}^{r}=\left(I-\gamma_{n} T_{X, J_{n}}\right) \eta_{n-1}^{r}+\gamma_{n} \Xi_{n}^{r}
\end{aligned}
\end{equation}
where $\Xi_{n}^{r}=\left(T_{X, J_{n}}-T_{X_{n}, J_{n}}\right) \eta_{n-1}^{r-1}$. Then we have

\begin{equation}
    \begin{aligned}
        \left(E\left\|\bar{\vartheta}_{n}\right\|_{2}^{2}\right)^{1 / 2} 
        &\leq \sum_{k=0}^{r}\left(E\left\|\bar{\eta}_{n}^{k}\right\|_{2}^{2}\right)^{1 / 2}+\left(E\left\|\bar{\vartheta}_{n}-\sum_{k=0}^{r} \bar{\eta}_{n}^{k}\right\|_{2}^{2}\right)^{1 / 2} \\
        &\stackrel{(1)}{= } \sum_{k=0}^{r}\left(E\left\|\bar{\eta}_{n}^{k}\right\|_{2}^{2}\right)^{1 / 2}+ 0,\quad\text{when }r\geq n\\
        & \stackrel{(2)}{\leq} \sum_{k=0}^{r} C\left(\gamma_{0} R^{2}\right)^{k / 2} C_{\epsilon} n^{-s/(2s+1)}\log n,\quad \text{with }R^2 = M^2\zeta(2\omega)\\
        & = O\left(n^{-s/(2s+1)}\log n\right).
    \end{aligned}
\end{equation}
In (1) we used Lemma~\ref{lemma:exactlyzero} (after taking another average). Step (2) leveraged the finiteness of the rank of $T_{X,J_n}$, which is given in Lemma~\ref{lemma:myvariancecontrol}. Our choice of $\omega > \frac{1}{2}$ ensures that $R$ is a finite number not depending on $n$.
\end{proof}

\begin{lemma}
\label{lemma:myvariancecontrol}
Under assumptions A1-A3, we have
\begin{equation}
    E\left[\left\|\bar{\eta}_{n}^{r}\right\|_{2}^{2}\right] = O\left(\gamma_0^rR^{2r}C_{\epsilon}^2 n^{-2s/(2s+1)} \log^{2} n\right)
\end{equation}
\end{lemma}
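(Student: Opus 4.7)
The plan is to follow the argument of Lemma~\ref{lemma:Jnvariance} closely, with two modifications tailored to the present regime ($\omega\in(1/2,s)$ strict, and $J_n = n^{1/(2s+1)}\log^2 n$ prescribed): the kernel decay is governed by $\omega$ rather than $s$, and the operator $T_{X,J_k}^{\omega}$ has finite rank at most $J_k$, so the summation over eigenvalues is truncated. Begin by unrolling the recursion for $\eta_n^r$ and applying Lemma~\ref{onXinr} with $R^2 = M^2\zeta(2\omega)$ (finite because $\omega>1/2$) to obtain, in analogy with the chain ending at the last line of \eqref{eq:variance main eq},
\begin{equation*}
n^2\, E\!\left[\|\bar\eta_n^r\|_2^2\right] \;\leq\; \gamma_0^r R^{2r} C_\epsilon^2 \sum_{k=1}^n \gamma_k^2 \sum_{t=1}^{J_k} \lambda_{k,t}\lambda_{n,t}\, A_{k,t}^2,\qquad A_{k,t}:=\sum_{j=k}^n \prod_{i=k+1}^j(1-\gamma_i\lambda_{k,t}),
\end{equation*}
where $\lambda_{k,t}$ is the $t$-th eigenvalue of $T_{X,J_k}^{\omega}$ and the finite-rank property of this operator is what cuts the $t$-sum off at $J_k$.

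Next, I would invoke Lemma~\ref{gap1inlemma7} (whose proof depends only on the choice $\gamma_i = \gamma_0 i^{-\zeta}$ with $\zeta = 1/(2s+1)$, and therefore carries over unchanged) to get $A_{k,t}^2 \leq 2C^2\bigl(\lambda_{k,t}^{-2/(1-\zeta)} + \lambda_{k,t}^{-2} k^{2\zeta}\bigr)$, and split the double sum into a $T_1$ piece and a $T_2$ piece. For the dominant piece $T_2$, use Lemma~\ref{lemma:samejdifferentkernels} to assert that $\lambda_{n,t}\lambda_{k,t}^{-1} = \Theta(1)$ uniformly for $t\leq J_k$; this reduces the inner sum to $O(J_k)$ and yields
\begin{equation*}
T_2 \;\leq\; C\sum_{k=1}^n \gamma_k^2\, k^{2\zeta}\, J_k \;=\; C\sum_{k=1}^n J_k \;\leq\; C\, n J_n \;=\; O\!\left(n^{1+\zeta}\log^2 n\right),
\end{equation*}
which, after dividing by $n^2$ and using $\zeta - 1 = -2s/(2s+1)$, is exactly the advertised rate.

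For the residual piece $T_1$, computing $\sum_{t=1}^{J_k} t^{4\omega\zeta/(1-\zeta)} = \sum_{t=1}^{J_k} t^{2\omega/s} \asymp J_k^{2\omega/s+1}$ and summing over $k$ gives a contribution of order $n^{1+\zeta(2\omega/s-1)} \log^{2(2\omega/s+1)} n$. The strict assumption $\omega<s$ (enforced in Theorem~\ref{maintheorem_fixedJn}) makes $\zeta(2\omega/s-1) < \zeta$ strictly, so $T_1$ is dominated by $T_2$ and does not degrade the leading rate. The main obstacle is therefore not a single delicate estimate but careful book-keeping: verifying that the constants supplied by Lemma~\ref{lemma:samejdifferentkernels} are uniform in both $k$ and $n$ (so that one genuinely gets $\lambda_{n,t}/\lambda_{k,t} = \Theta(1)$ across all $k \leq n$), and checking that the interplay of $J_k = k^\zeta\log^2 k$ with the $\omega<s$ gap leaves only the $\log^2 n$ factor in the final bound rather than the larger poly-logarithmic losses that would arise at the boundary $\omega = s$.
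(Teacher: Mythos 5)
Your proposal is correct and follows essentially the same route as the paper's proof: both start from the bound inherited from Lemma~\ref{lemma:Jnvariance}/\eqref{eq:afterpain} with the $t$-sum truncated at $J_k$ by finite rank, apply Lemma~\ref{gap1inlemma7} and the uniform eigenvalue bounds of Lemma~\ref{lemma:samejdifferentkernels}, bound the dominant piece by $\sum_k \gamma_k^2 k^{2\zeta} J_k = O(nJ_n)$, and use $\omega<s$ to show the other piece is of strictly lower polynomial order. The uniformity-in-$J$ of the constants you flag is exactly what the paper's note after Lemma~\ref{lemma:samejdifferentkernels} guarantees, so no gap remains.
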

\begin{proof}
Denote $\zeta = \frac{1}{2s+1}, \rho = 1-\zeta$. According to the proof of Lemma~\ref{lemma:Jnvariance} (equation \eqref{eq:afterpain}), we have 
\begin{equation}
    \begin{aligned}
        (\gamma_0^rR^{2r}C_{\epsilon}^2)^{-1}n^2E\left[\left\|\bar{\eta}_{n}^{r}\right\|_{2}^{2}\right]
        &\leq \sum_{k=1}^{n} \gamma_{k}^{2}  \sum_{t=1}^{J_k} \lambda_{k,t}\lambda_{n,t}\left( C\lambda_{k,t}^{-2/\rho}\right) + \lambda_{k,t}\lambda_{n,t} \left(\lambda_{k,t}^{-2}k^{2 \zeta}\right)\\
        & \stackrel{(1)}{\leq }C\sum_{k=1}^n \gamma_k^2 \sum_{t=1}^{J_k} \left(t^{-4\omega + 4\omega/\rho} + k^{2\zeta}\right)\\
        & \leq C \sum_{k=1}^n k^{-2\zeta} (J_k)^{4\omega/\rho - 4\omega + 1} + J_k \\
        & \stackrel{(2)}{\leq }C\sum_{k = 1}^n k^{\zeta} + k^{\zeta} \log^{2} k = O\left( n^{1+\zeta} \log^{2} n\right)
    \end{aligned}
\end{equation} 
In step (1) we used the result of Lemma~\ref{lemma:samejdifferentkernels}. For step (2) we note $\omega < s$.
\end{proof}

\section{Space Expense Analysis}
In this section, we are going to formally model how round-off errors appear in the process of collecting data and constructing the Sieve-SGD estimator. We are also going to characterize how to optimally asymptotically increase space expense to ensure that round-off error does not affect model performance (beyond a multiplicative log term). Under minor simplification, we will show in Section~\ref{subsection:spaceexpenseofSieve-SGD} that $O(\log(n))$ times more space resources (counted in bits) is enough to make the influence of round-off error on statistical performance negligible. On the other hand, in Section~\ref{app:memoryusage} we will give the minimal space expense (also counted in bits) required for constructing a statistically rate-optimal estimator (using any procedures). Notably, the optimal space expense of Sieve-SGD only differs from this lower bound by a polylog term, therefore we claim the space expense of Sieve-SGD is almost optimal.

\textit{Notation:} the left subscript ${}_r\cdot$ will be used to denote quantities that are directly related to round-off error.
\subsection{Sieve-SGD under round-off error}
\label{subsection:spaceexpenseofSieve-SGD}
In this subsection we are going to give an analysis of how a $O(\log^3(n)n^{\frac{1}{2s+1}})$-sized version of Sieve-SGD can achieve the optimal rate for estimating $f_{\rho}$, under assumptions A1 - A4 and some extra assumptions (A5,A6) regarding round-off error. We focus on the case when $\omega = s$ (Theorem~\ref{maintheorem}). Very similar argument can be applied to the case when $\omega \neq s$ to proof Sieve-SGD can achieve near-optimality with the save space expense (Theorem~\ref{maintheorem_fixedJn}). 

We note that the size of the estimators above can be decomposed as \begin{equation}
    \log^3(n)n^{\frac{1}{2s+1}} =\log^2(n)n^{\frac{1}{2s+1}}\cdot \log(n), 
\end{equation}
where the $\log^2(n)n^{\frac{1}{2s+1}}$ term corresponds to the minimal number of basis functions needed to construct Sieve-SGD as stated in Theorem~\ref{maintheorem}, and the extra logarithm term is due to the precision loss when storing a real number as a float point number.

Modern statistical estimation procedures are performed exclusively with the help of digital computers. Although computers cannot store general real numbers with arbitrary precision, statisticians usually do not count in such ubiquitous round-off errors when analyzing statistical procedures due to their tiny magnitude (for an example when it may cause some troubles, see \cite{tarkhan2020bigsurvsgd}). However, we need to model and analyze in a finer scale because our space expense is calculated in the unit of bit (rather than \textit{number} of basis function or \textit{number} of float point numbers). Let's be more specific about the round-off error in our estimation setting:

Recall the Sieve-SGD updating rule:
 \begin{equation}
    \hat f_{n+1} = \hat f_n + \gamma_n(Y_n - \hat f_n(X_n))K_{X_n,J_n}
    \end{equation}
and we denote $\hat f_n = \sum_{j=1}^{J_n}\hat \beta_{nj} \psi_j$. The above function update can be reduced to a simutanous update of $J_n$ regression coefficients $\hat\beta_{nj}$ (as stated in Appendix~\ref{app:algorithm}):
\begin{equation}
      \label{eq:betawithoutroundofferror} \hat\beta_{(n+1)j} = \hat \beta_{nj} + \gamma_n(Y_n - \hat f_n(X_n))j^{-2s}\psi_j(X_n)
    \end{equation}
However, because general real numbers cannot be stored in a computer with infinite precision, the right-hand-side quantity of the above update rule cannot be evaluated perfectly. What is calculated and stored in the computer is a round-off version instead:
\begin{equation}
 \text{round}_n\left(\hat \beta_{nj} + \gamma_n(Y_n - \hat f_n(X_n))j^{-2s}\psi_j(X_n)\right)
\end{equation}
Here $\text{round}_n(z)$ rounds/truncates the decimal expansion of $z$ after some digit (which we allow to be a function of $n$). Thus, there is round-off error between the rounded version and the exact version, which we denote as
\begin{equation}
    {}_r\epsilon_{nj} := \hat \beta_{nj} + \gamma_n(Y_n - \hat f_n(X_n))j^{-2s}\psi_j(X_n) - \text{round}_n\left(\hat \beta_{nj} + \gamma_n(Y_n - \hat f_n(X_n))j^{-2s}\psi_j(X_n)\right)
\end{equation}
The round-off error is due, both, to the inexact storage of data $X_n,Y_n$, and potentially inexact evaluation of the intermediate quantities such as $\hat f(X_n), \psi_j(X_n)$. Even in the case when all the above is done without round-off, once we store the coefficients in computer memory, an inevitable precision loss will be introduced because only a finite length of memory is assigned to each $\hat\beta_{nj}$.

In assumption A5 we formalize a sequence of Sieve-SGD estimates contaminated by round-off errors and specify how small we require the errors to be to maintain statistical rate-optimality of our estimator:

\begin{itemize}
    \item[A5](Iteration with round-off error) The recursive relation of Sieve-SGD \eqref{submainform} is given under round-off error. That is
    \begin{equation}
        \hat f_{n+1} = \underbrace{\hat f_n + \gamma_n(Y_n - \hat f_n(X_n))K_{X_n,J_n}}_{\text{exact value we should have assigned to }\hat f_{n+1}} + \underbrace{ \sum_{j=1}^{J_n} {}_r\epsilon_{nj} \psi_j}_{\text{round-off error (in function form)}}
    \end{equation}
    \end{itemize}
    Moreover, for each $j=1,...,J_n$, we assume the round-off error sequence (indexed by $n$) ${}_r\epsilon_{nj}$ is of order $o(n^{-2})$.\\

There is an equivalent way to express our assumption: Let $\hat f_n = \sum_{j=1}^{J_n}\hat \beta_{nj} \psi_j$, we assume the updating of coefficient $\hat\beta_{nj}$ is under round-off error ${}_r\epsilon_{nj}$, i.e.
    \begin{equation}
      \label{eq:betawithroundofferror} \hat\beta_{(n+1)j} = \hat \beta_{nj} + \gamma_n(Y_n - \hat f_n(X_n))j^{-2s}\psi_j(X_n) + {}_r\epsilon_{nj}
    \end{equation}
where the round-off errors ${}_r\epsilon_{nj}\in\mathbb{R},j=1,...,J_n$ are of order $o(n^{-2})$.\\

\textbf{Note 1:} As our readers will see very soon, we propose to assign more digits to store each $\hat\beta_{nj}$ as more data is collected. This will result in round-off errors that decrease as the sample size $n$ increases.

\textbf{Note 2:} We assumed the round-off error of updating each coefficient $\hat\beta_{nj}$ is of order $o(n^{-2})$. There are many other options that people have to model the size of the round-off error: Maybe the upper bound ($o(n^{-2})$) should not only depend on $n$, but also depend on $j$ (for each $j$, ${}_r\epsilon_{nj} = o(a_{nj})$ with some decreasing sequence $a_{nj}$ ); Alternatively, we could have not put assumptions on the \textit{difference} between the exact value and the rounded one, but assume their \textit{ratio} is not too far away from $1$. The treatment we present in this study could be extended: Further discussion of other candidate assumptions is left to future work.

\begin{theorem}
\label{theorem:maintheoremwitheroundoff}
Under the same assumptions as Theorem~\ref{maintheorem}, if we further assume the round-off error satisfy the assumption A5. Then the Sieve-estimator $\bar f_n = \frac{1}{n}\sum_{i=1}^n\hat f_i$, where $\hat f_i$'s are contaminated by the round-off error, is still rate-optimal for estimating $f_{\rho}$.
\end{theorem}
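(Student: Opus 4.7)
The plan is to reduce the theorem to Theorem~\ref{maintheorem} by peeling off the round-off contribution. Let $\hat f_n^{*}$ denote the idealized iterates satisfying \eqref{submainform} exactly (i.e., without round-off), set $\delta_n := \hat f_n - \hat f_n^{*}$, and write the per-step round-off as $e_n := \sum_{j=1}^{J_n}{}_r\epsilon_{nj}\psi_j$. Subtracting the two update rules yields the linear recursion
\begin{equation*}
\delta_0 = 0,\qquad \delta_{n+1} = \left(I-\gamma_n T_{X_n,J_n}\right)\delta_n + e_n.
\end{equation*}
Since Minkowski gives $\left(E\|\bar f_n - f_\rho\|_{L^2_{\rho_X}}^2\right)^{1/2} \leq \left(E\|\bar f_n^{*} - f_\rho\|_{L^2_{\rho_X}}^2\right)^{1/2} + \left(E\|\bar\delta_n\|_{L^2_{\rho_X}}^2\right)^{1/2}$ and Theorem~\ref{maintheorem} already controls the first summand at rate $O(n^{-s/(2s+1)})$, it suffices to prove $\left(E\|\bar\delta_n\|_{L^2_{\rho_X}}^2\right)^{1/2} = O(n^{-s/(2s+1)})$.

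Step two is a size bound on $e_n$ in the RKHS norm. Because $\|\psi_j\|_K^2 = j^{2s}$ and assumption A5 gives $|{}_r\epsilon_{nj}| = o(n^{-2})$ uniformly in $j \leq J_n$,
\begin{equation*}
\|e_n\|_K^2 = \sum_{j=1}^{J_n} j^{2s}\,{}_r\epsilon_{nj}^2 \leq C J_n^{2s+1} \max_{j} {}_r\epsilon_{nj}^2 = o\!\left(n^{-3}(\log n)^{2(2s+1)}\right),
\end{equation*}
using $J_n \leq C n^{1/(2s+1)}(\log n)^2$. Hence $\|e_n\|_K = o(n^{-3/2}(\log n)^{2s+1})$, and in particular $\sum_{k\geq 0}\|e_k\|_K < \infty$. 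This is precisely where the $o(n^{-2})$ budget is used: it absorbs the $J_n^{(2s+1)/2}$ inflation incurred when converting a component-wise error bound into an RKHS norm bound.

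Step three exploits the linearity of the recursion. Write $\delta_n = \sum_{k=0}^{n-1}\delta_n^{(k)}$, where the sub-process $\delta_n^{(k)}$ injects $e_k$ at time $k+1$ and afterwards evolves noiselessly: $\delta_{k+1}^{(k)} = e_k$ and $\delta_{i+1}^{(k)} = (I-\gamma_iT_{X_i,J_i})\delta_i^{(k)}$ for $i > k$. Each such sub-process is structurally identical to the initial-condition sub-process $\eta_n$ of Appendix~\ref{initialsection}, only launched at time $k+1$ with $e_k$ playing the role of $-f_\rho$. Replicating the per-step bound \eqref{eq:conciserecur} for $\delta_i^{(k)}$, summing from $i = k+1$ to $n$, and controlling the residual $K$-norm accumulation via Lemma~\ref{lemma:newboundr} (whose uniform bound on $\prod_i(1+2u\gamma_iJ_i^{-2s})$ does not depend on the starting index) yields
\begin{equation*}
\sum_{i=k+1}^n \gamma_i\, E\|\delta_i^{(k)}\|_{L^2_{\rho_X}}^2 \leq C\, E\|e_k\|_K^2,
\end{equation*}
with a constant $C$ independent of $k$ and $n$.

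Step four combines the sub-process bounds. Jensen's inequality together with the monotonicity $\gamma_i \geq \gamma_n$ gives
\begin{equation*}
E\|\bar\delta_n^{(k)}\|_{L^2_{\rho_X}}^2 \leq \frac{n-k}{n^2}\sum_{i=k+1}^n E\|\delta_i^{(k)}\|_{L^2_{\rho_X}}^2 \leq \frac{C\,E\|e_k\|_K^2}{n\,\gamma_n} = O\!\left(\|e_k\|_K^2\, n^{-2s/(2s+1)}\right),
\end{equation*}
uniformly in $k$, and another application of Minkowski produces
\begin{equation*}
\left(E\|\bar\delta_n\|_{L^2_{\rho_X}}^2\right)^{1/2} \leq \sum_{k=0}^{n-1}\left(E\|\bar\delta_n^{(k)}\|_{L^2_{\rho_X}}^2\right)^{1/2} = O\!\left(n^{-s/(2s+1)}\sum_{k\geq 0}\|e_k\|_K\right) = O(n^{-s/(2s+1)}),
\end{equation*}
which together with Theorem~\ref{maintheorem} delivers the claimed rate. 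The main obstacle is verifying that the constant in the sub-process bound is genuinely uniform over the launching time $k$; this hinges on showing that both the multiplicative product in Lemma~\ref{lemma:newboundr} and the tail sum $\sum_i \gamma_iJ_i^{-2s}$ admit absolute upper bounds, which they do since $\gamma_iJ_i^{-2s} = O(i^{-1}(\log i)^{-4s})$ under the chosen hyperparameters.
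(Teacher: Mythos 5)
Your proposal is correct, and it takes a genuinely different (and in one respect sharper) route than the paper. The paper does not separate off a deviation process $\delta_n=\hat f_n-\hat f_n^{*}$; instead it keeps the decomposition $\Delta_n=\eta_n+\vartheta_n$ from the proof of Theorem~\ref{maintheorem}, absorbs the round-off term ${}_r\Xi_n$ into a modified zeroth-order noise sequence ${}_r\eta_n^0$, and bounds its extra contribution by the triangle inequality together with $\|\prod_l(I-\gamma_l T_{X,J_l})\|\le 1$, arriving at the additive term $n\sum_{i=1}^n\|{}_r\Xi_i\|_K^2$. That crude bound does not actually vanish as $n\to\infty$ (it is $\Omega(n)$ whenever any round-off error is nonzero), so as written the paper's final display does not close the argument. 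Your treatment avoids this: by viewing each injected error $e_k$ as a fresh ``initial condition'' launched at time $k$, you can reuse the telescoping bound \eqref{eq:conciserecur} and the uniform product bound of Lemma~\ref{lemma:newboundr} to extract the crucial $(n\gamma_n)^{-1}=O(n^{-2s/(2s+1)})$ decay for each sub-process, and then the summability $\sum_k\|e_k\|_K<\infty$ (which is exactly what the $o(n^{-2})$ budget in A5 buys after the $J_k^{(2s+1)/2}$ inflation) lets Minkowski finish. Two small points to make explicit: (i) the sub-process argument needs $e_k$ to be $\mathcal{F}_k$-measurable so that conditioning on the past before averaging over $X_{k+1},X_{k+2},\dots$ is legitimate — this holds since the round-off at step $k$ is determined by the data and computations up to step $k$; and (ii) your bound $\|e_n\|_K^2\le C J_n^{2s+1}\max_j{}_r\epsilon_{nj}^2$ uses $J_n\le Cn^{1/(2s+1)}\log^2 n$, which is the regime relevant to the space-complexity corollary (and implicitly assumed in the paper's own computation $\|{}_r\Xi_i\|_K^2\le {}_r\epsilon_i^2\cdot i$), not the full range of $J_n$ permitted by Theorem~\ref{maintheorem}; this restriction should be stated.
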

\begin{proof}
The proof of this theorem is basically the same as that of Theorem~\ref{maintheorem}. The only difference now is there is an extra round-off error term in the recursion. Recall in the proof of Theorem~\ref{maintheorem} in Appendix~\ref{section:mainproof} we define the difference between our estimates and $f_{\rho}$ as $\Delta_n$:
\begin{equation}
\begin{aligned}
\Delta_{n}&=\hat f_{n}-f_{\rho}\\ \bar{\Delta}_{n}&=\bar{f}_{n}-f_{\rho}
\end{aligned}
\end{equation}
And we have a recursive formula for $\Delta_n$ under A5:
\begin{equation}
\begin{aligned}
\Delta_0 &= -f_{\rho}\\
\Delta_{n} & =\left(I-\gamma_{n} T_{X_n,J_n}\right) \Delta_{n-1}+\gamma_{n} \Xi_{n}+ {}_r\Xi_n
\end{aligned}
\end{equation}
where 
\begin{equation}
    \begin{aligned}
        T_{X_n,J_n}(f) &= f(X_n)K_{X_n,J_n}\\
        \Xi_{n}&=\left(Y_n-f_{\rho}\left(X_n\right)\right) K_{X_n,J_n}\\
   {}_r\Xi_n &= \sum_{j=1}^{J_n} {}_r\epsilon_{nj} \psi_j
    \end{aligned}
\end{equation}
Here ${}_r\Xi_n$ represents the influence of the round-off error. All we are going to do is show this is a higher order error term. Similar to our previous proofs, we further decompose $\Delta_n$ into two parts: $\Delta_n = \eta_n + \vartheta_n$:
\begin{enumerate}
    \item $\left(\eta_n\right) \text { is defined as } $:
\begin{equation}
\begin{aligned}
   \eta_0& =-f_{\rho}\\
   \eta_n& =\left(I-\gamma_n T_{X_n,J_n}\right) \eta_{n-1}     
\end{aligned} 
\end{equation}
We note $\eta_n$ is exactly the same sequence as in Section~\ref{subsection:separation of error}, which means we already have the optimal bound on it. We do not need to worry about it in the rest of this proof. 
\item The pure noise part \(\left(\vartheta_n\right)\) now has the round-off error noise:
\begin{equation}
    \begin{aligned}      
\vartheta_0&=0\\
\vartheta_n& =\left(I-\gamma_n T_{X_n,J_n}\right) \vartheta_{n-1}+\gamma_n \Xi_{n}+ {}_r\Xi_n
    \end{aligned}
\end{equation}
\end{enumerate}
To control $E[\|\bar \vartheta_n\|_2^2]$, we introduce $\eta_n^k$ for $k\geq 0$ as in Section~\ref{noisesection}, that is
\begin{equation}
\begin{aligned}
    \eta_0^0&=0 \\
    \eta_n^0&=(I-\gamma_n T_{X,J_n}) \eta_{n-1}^0+\gamma_n \Xi_{n}^0
\end{aligned}
\end{equation}
where $\Xi_n^0:=\Xi_n = \left(Y_{n}-f_{\rho}\left(X_{n}\right)\right) K_{X_n,J_n}$. And for each integer $k>0$:
\begin{equation}
\begin{aligned}
    \eta_0^k&=0 \\
    \eta_n^k&=(I-\gamma_n T_{X,J_n}) \eta_{n-1}^k+\gamma_n \Xi_{n}^k
\end{aligned}
\end{equation}
where \(\Xi_n^k=\left(T_{X,J_n}-T_{X_n,J_n}\right) \eta_{n-1}^{k-1}\). And now we need define another sequence to count in the round-off error:
\begin{equation}
\begin{aligned}
    {}_r\eta_0^0&=0 \\
    {}_r\eta_n^0&=(I-\gamma_n T_{X,J_n}) \left({}_r\eta_{n-1}^0\right)+\gamma_n \Xi_{n}^0 + {}_r\Xi_n
\end{aligned}
\end{equation}
Similar to Lemma~\ref{lemma:exactlyzero}, we can verify that
\begin{equation}
    \vartheta_n = {}_r\eta^0_n + \sum_{k=1}^m \eta_{n}^k,\quad\text{for }m\geq n
\end{equation}
Then we use the triangular inequality:
\begin{equation}
\begin{aligned}
      \left(E\|\bar{\vartheta}_{n}\|_2^{2}\right)^{1/2} 
 & \leq \left(E\|{}_r\bar\eta_n^0\|_2^2 \right)^{1/2}+ \sum_{k=1}^m\left(E\|\bar\eta_n^k\|_2^{2}\right)^{1/2} 
 +\left(E\|\bar\eta_n - {}_r\bar\eta_n^0 - \sum_{k=1}^m \bar\eta_n^k\|_2^{2}\right)^{1/2} \\ 
 & \stackrel{(1)}{\leq}  o(n^{-\frac{s}{2s+1}}) + \sum_{k=0}^m\left(E\|\bar\eta_n^k\|_2^{2}\right)^{1/2} 
 +0\quad\text{for }m\geq n\\
 &\leq O(n^{-\frac{s}{2s+1}})
\end{aligned}
\end{equation}
Here is a more detailed calculation of step (1)
\begin{equation}
\begin{aligned}
E\left[\left\|{}_r\bar{\eta}_{n}^0\right\|_2^{2}\right]
&=E\left[\left\langle{}_r\bar{\eta}_n^0, T_{X,J_n} {}_r\bar{\eta}_n^0\right\rangle_K\right] 
= E \left[\left\|T_{X,J_n}^{1/2}{}_r\bar \eta_n^0\right\|_K^2\right]\\
&  = \frac{1}{n^{2}} E\left[\left\|T^{1 / 2}_{X,J_n} \sum_{i=1}^{n} \sum_{j=1}^{i}\left[\prod_{l=j+1}^{i}\left(I-\gamma_l T_{X,J_l}\right)\right]\left( \gamma_j \Xi_j^0+ {}_r\Xi_j\right)\right\|_{K}^{2}\right]\\
& \leq E\left[\left\|\bar{\eta}_{n}^0\right\|_2^{2}\right] + 
\frac{1}{n^2}E\left[\left\|\sum_{i=1}^n \sum_{j=1}^i {}_r\Xi_j\right\|_K^2\right]\\
& \leq E\left[\left\|\bar{\eta}_{n}^0\right\|_2^{2}\right] + 
n\sum_{i=1}^n\|{}_r\Xi_i\|_K^2\\
& \leq E\left[\left\|\bar{\eta}_{n}^0\right\|_2^{2}\right] + n\sum_{i=1}^n {}_r\epsilon_i^2\cdot i
\end{aligned}
\end{equation}
When ${}_r\epsilon_i^2 = o(i^{-4})$, second round-off error term will become higher order, and we have the desired optimal rate.
\end{proof}
Now we specify how we model the decrease of the round-off errors as we use a longer binary sequence to store $\hat\beta_{nj}$
\begin{itemize}
    \item[A6] An $(\alpha+1)\log(n)$-long binary sequence is needed for each of the coefficient $\hat\beta_{nj}$ \eqref{eq:betawithroundofferror} to ensure the round-off errors ${}_r\epsilon_{nj}$ to be of order $o(n^{-\alpha})$.
\end{itemize}
We state this as an assumption, rather than a result because our theoretical roundoff error model allows for potential error to be introduced at multiple places in our update. In the case that everything is calculated exactly, and the only error comes from a final truncation, then it is straightforward to show that A6 holds.

We now give some intuition for the assumption. If we have a $(\alpha+1)\log(n)$-long binary sequence in hand, we can use it to specify $2^{(\alpha+1)\log(n)}\sim n^{\alpha+1}$ numbers. Therefore, for any number $a$ that belongs to a bounded interval $[-M,M]$, we can 1) specify an equally-spaced grid using this binary sequence (there are $\sim n^{\alpha+1}$ grid points); and 2) there must exist a grid point that can approximate any number with an error less than $\sim Mn^{-(\alpha+1)}$. This is the basic intuition that how a $\alpha\log(n)$ length binary sequence should in general give us an $n^{-\alpha}$ accuracy.

Our assumption A6 does not perfectly match with how float point numbers are used in modern computer. The protocal of IEEE 754 standard of float point representation is significantly more complicated and technical \cite{overton2001numerical} than the simplification we present in A6. However, our assumption still captures the main relationship between binary sequence length and round-off error in the sense that every one more digit will give us a doubled accuracy to represent a real number.

Now we state our main result of this section, which can be best understood when compared with Theorem~\ref{theorem:minimummemory}.

\begin{corollary}
\label{corollary:spaceexpense}
Under assumptions A1-A6, there is a $O(\log^3(n)n^{\frac{1}{2s+1}})$-sized version of Sieve-SGD that can achieve the minimax optimal statistical convergence rate. 
\end{corollary}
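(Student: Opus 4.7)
The plan is to combine the three ingredients that have already been assembled in the paper: (i) the optimal choice of the truncation level from Theorem \ref{maintheorem}, (ii) the tolerance of Sieve-SGD to $o(n^{-2})$-sized roundoff errors per coefficient per step given by Theorem \ref{theorem:maintheoremwitheroundoff}, and (iii) the bit-precision requirement formalized in Assumption A6. The argument essentially boils down to multiplying the number of coefficients by the precision needed per coefficient, and then checking the resulting object fits the formal definition of a $b_n$-sized estimator.

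First, I would fix $\alpha = 1/(2s+1)$ and take $J_n = \lfloor n^{1/(2s+1)}\log^2 n\rfloor$, which is the smallest growth rate permitted by Theorem \ref{maintheorem}. Under infinite precision this already yields the minimax rate. Second, I would appeal to Theorem \ref{theorem:maintheoremwitheroundoff}: as long as every per-coefficient roundoff error ${}_r\epsilon_{nj}$ in the recursion \eqref{eq:betawithroundofferror} decays as $o(n^{-2})$, the Polyak-averaged estimator $\bar f_n$ constructed from the contaminated $\hat f_n$ still attains the optimal $O(n^{-2s/(2s+1)})$ MSE. Third, by Assumption A6, achieving ${}_r\epsilon_{nj} = o(n^{-2})$ uniformly in $j$ requires storing each coefficient $\hat\beta_{nj}$ using $3\log n$ bits. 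Since only the $J_n$ coefficients corresponding to basis functions $\psi_1,\dots,\psi_{J_n}$ need to be maintained, the total memory usage is
\begin{equation}
b_n = 3 J_n \log n = O\!\left(n^{1/(2s+1)}\log^3 n\right).
\end{equation}

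To convert this accounting into a formal $b_n$-sized estimator, I would specify the encoder $E_n$ as the map that, upon each incoming $(X_i,Y_i)$, performs the truncated SGD update \eqref{eq:betawithroundofferror} and the Polyak averaging, writing the rounded coefficient vector $(\bar\beta_{n1},\dots,\bar\beta_{nJ_n})$ into a binary string of length $b_n$ using the $3\log n$-bits-per-number fixed-precision encoding guaranteed by A6. The decoder $D_n$ then maps that binary string to the function $\bar f_n = \sum_{j=1}^{J_n} \bar\beta_{nj}\psi_j$, which is a fixed, known mapping since $\{\psi_j\}$ is prespecified. By construction $M_n = D_n \circ E_n$ and the MSE guarantee from Theorem \ref{theorem:maintheoremwitheroundoff} applies.

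The only non-routine step is really checking that the intermediate quantities needed to form the next update, in particular $\hat f_n(X_{n+1})$ and the rescaled evaluations $j^{-2s}\psi_j(X_{n+1})$, do not require more than the allotted $3\log n$ bits per coefficient to produce a final rounded $\hat\beta_{(n+1)j}$ within the $o(n^{-2})$ tolerance; this is exactly what A6 abstracts away, so the potential obstacle is cosmetic rather than mathematical. Everything else — basis indexing overhead, storing the current value of $n$, etc. — contributes only lower-order $O(\log n)$ terms that are absorbed into the $\log^3 n$ factor. Hence Sieve-SGD gives a $O(n^{1/(2s+1)}\log^3 n)$-sized estimator attaining the minimax rate, which combined with the lower bound in Theorem \ref{theorem:minimummemory} shows that Sieve-SGD is memory-optimal up to a polylogarithmic factor.
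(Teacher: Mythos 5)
Your proposal is correct and follows essentially the same route as the paper's proof: invoke Theorem~\ref{theorem:maintheoremwitheroundoff} for tolerance of $o(n^{-2})$ roundoff error, use A6 to get $3\log n$ bits per coefficient, and multiply by the $J_n = O(n^{\frac{1}{2s+1}}\log^2 n)$ coefficient count. The extra detail you give on the explicit encoder--decoder decomposition is a harmless elaboration of what the paper leaves implicit.
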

\begin{proof}
We managed to show in Theorem~\ref{theorem:maintheoremwitheroundoff} that when the round-off error ${}_r\epsilon_{nj}$ is of size $o(n^{-2})$, Sieve-SGD can still achieve the optimal convergence rate. Under A6, it means we need a $3\log(n)$-length binary sequence to specify the coefficients $\hat\beta_{nj}$. Because there are $J_n = O(\log^2(n)n^{\frac{1}{2s+1}})$ coefficients used when sample size is $n$ (Theorem~\ref{maintheorem}), we conclude a $O(\log^3(n)n^{\frac{1}{2s+1}})$-sized version Sieve-SGD can achieve the minimax bound.
\end{proof}
\subsection{Proof of Theorem 6.5 and Discussion}
\label{app:memoryusage}
In this section we will show there is no $b_n$-size estimator with $b_n = o(n^{\frac{1}{2s+1}})$ that can achieve the minimax rate when estimating $f_{\rho}\in W(s,Q,\{\psi_j\})$. We first recall the metric entropy of a Sobolev ellipsoid satisfies (see \cite[Chapter~5]{wainwright2019high}):
\begin{equation}
\label{eq:wsmetricentropy}
    \log\mathcal{N}(\delta;W(s),\|\cdot\|_2)\asymp \left(\frac{1}{\delta}\right)^{1/s}\quad\text{for all suitably small }\delta>0
\end{equation}
We introduce the notation of $\delta$-net of a decoder $D_n:\{0,1\}^{b_n}\rightarrow \mathcal{F}$ (under $\|\cdot\|_2$-norm)
\begin{equation}
  \operatorname{net}(\delta, b_n ;D_n,\mathcal{F}) = \left\{\left.f\in \mathcal{F}\ \right|\ \exists s_n\in \{0,1\}^{b_n}, \text{ such that }\|f-D_n(s_n)\|_2\leq\delta\right\}
\end{equation}
We use the notation $\operatorname{net}(\delta, b_n)$ when it is clear what $D_n,\mathcal{F}$ we are refering to.
\begin{proof}[Proof of Theorem \ref{theorem:minimummemory}]
Let $M_n$ be any $b_n$-sized estimator with $b_n = o\left(n^{\frac{1}{2s+1}}\right)$. We denote its decoder function as $D_n$. We also choose a sequence $c_n$ such that $b_n = o(c_n),c_n = o(n^{\frac{1}{2s+1}})$.\\

Now, we plug $\delta = c_n^{-s}$ into \eqref{eq:wsmetricentropy}
\begin{equation}
    \log_2\mathcal{N}(c_n^{-s};W(s),\|\cdot\|_2)\asymp c_n
\end{equation}
Because there are at most $2^{b_n}$ elements in $D_n(\{0,1\}^{b_n})$ ($D_n$ is a known function), we also note 
\begin{equation}
    2^{b_n}\leq C 2^{c_n}
\end{equation} 
for some constant $C$. So we know $D_n(\{0,1\}^{b_n})$ cannot be a $c_n^{-s}$-cover of $W(s)$ for large enough $n$. In other words, for large $n$,
\begin{equation}
    W(s)\backslash\operatorname{net}(c_n^{-s}, b_n)
\end{equation}
is not an empty set. \\

Then we know
\begin{equation}
    \begin{aligned}
        \sup_{f_{\rho}\in W(s)} E[\|M_n((X_i,Y_i)^n_{i=1})- f_{\rho}\|_2^2]
        & = \sup_{f_{\rho}\in W(s)} E[\|D_n(s_n) - f_{\rho}\|_2^2]\quad\text{where }s_n = E_n((X_i,Y_i)_{i=1}^n)\\
        & \geq \sup_{f_{\rho}\in W(s)\backslash\operatorname{net}(c_n^{-s}, b_n)} E[\|D_n(s_n) - f_{\rho}\|_2^2]\\
        & \geq \sup_{f_{\rho}\in W(s)\backslash\operatorname{net}(c_n^{-s}, b_n)} \inf_{s_n\in \{0,1\}^{b_n}}\|D_n(s_n) - f_{\rho}\|_2^2\\
        & \geq c_n^{-s}
    \end{aligned}
\end{equation}
Because this is true for any $b_n$-sized estimator $M_n$, we have
\begin{equation}
\begin{aligned}
    \inf_{M_n} \sup_{f_{\rho}\in W(s)} E[c_n^{s}\|M_n((X_i,Y_i)^n_{i=1})- f_{\rho}\|_2^2] & \geq 1\\
   \Rightarrow \inf_{M_n} \sup_{f_{\rho}\in W(s)} E[n^{\frac{2s}{2s+1}}\|M_n((X_i,Y_i)^n_{i=1})- f_{\rho}\|_2^2] & \geq n^{\frac{2s}{2s+1}}c_n^{-s}\\
   \Rightarrow \lim_{n\rightarrow \infty}\inf_{M_n} \sup_{f_{\rho}\in W(s)} E[n^{\frac{2s}{2s+1}}\|M_n((X_i,Y_i)^n_{i=1})- f_{\rho}\|_2^2] & \rightarrow \infty\\
\end{aligned}
\end{equation}
where the last line follows from the definition of $c_n$.
\end{proof}
Now we give a little bit of discussion on applying the above argument in parametric learning problem. Suppose we have a very simple model
\begin{equation}
    Y = \theta X + \epsilon
\end{equation}
where $X\in [0,1]$, $\theta\in [0,1]$, $\epsilon$ is uniformly-bounded, centered noise. Using the above argument, we can show that for any $b_n$-sized estimator $M_n$ with $b_n = o(\log n)$, we have
\begin{equation}
    \lim_{n\rightarrow\infty}\inf_{M_n} \sup_{\theta\in [0,1]} E\left[n\|M_n((X_i,Y_i)_{i=1}^n) - \theta\|_2^2\right] = \infty
\end{equation}
This seems to suggest that for any estimator that uses a constant amount of memory, we cannot get a rate-optimal estimator of $\theta$. This feels counter-intuitive because $\hat\theta_n$ is just a number. However we need to emphasize that in our formalization, the memory usage is counted in the unit of bit, i.e. one bit is $O(1)$. In practice we usually give the estimator substantial available memory ($>64$ bits) and do not require extreme estimation accuracy. Thus our theory does not contradict the common belief that ``parametric problem can be solved within $O(1)$ memory", because normally the unit of counting memory is a single stored real-number, rather than a single bit.

\textbf{Note:} Instead of the covering number of $W(s)$, the above result needs the following metric entropy result of the interval $[0,1]$ (see Prop.4.2.12 in \cite{vershynin2018high})
\begin{equation}
    \log\mathcal{N}(\delta;[0,1],\|\cdot\|_2)\asymp \log\left(\frac{1}{\delta}\right)
\end{equation}

\end{appendices}
\bibliographystyle{apalike}
\bibliography{main}

\begin{thebibliography}{}

\bibitem[Arora and Barak, 2009]{arora2009computational}
Arora, S. and Barak, B. (2009).
\newblock {\em Computational complexity: a modern approach}.
\newblock Cambridge University Press.

\bibitem[Babichev and Bach, 2018]{babichev2018constant}
Babichev, D. and Bach, F. (2018).
\newblock Constant step size stochastic gradient descent for probabilistic
  modeling.
\newblock {\em stat}, 1050:21.

\bibitem[Bach and Moulines, 2013]{bach2013non}
Bach, F. and Moulines, E. (2013).
\newblock Non-strongly-convex smooth stochastic approximation with convergence
  rate o (1/n).
\newblock In {\em Advances in neural information processing systems}, pages
  773--781.

\bibitem[Belkin et~al., 2018]{belkin2018reconciling}
Belkin, M., Hsu, D., Ma, S., and Mandal, S. (2018).
\newblock Reconciling modern machine learning and the bias-variance trade-off.
\newblock {\em arXiv preprint arXiv:1812.11118}.

\bibitem[Berlinet and Thomas-Agnan, 2011]{berlinet2011reproducing}
Berlinet, A. and Thomas-Agnan, C. (2011).
\newblock {\em Reproducing kernel Hilbert spaces in probability and
  statistics}.
\newblock Springer Science \& Business Media.

\bibitem[Borkar, 2009]{borkar2009stochastic}
Borkar, V.~S. (2009).
\newblock {\em Stochastic approximation: a dynamical systems viewpoint},
  volume~48.
\newblock Springer.

\bibitem[Bottou, 2010]{bottou2010large}
Bottou, L. (2010).
\newblock Large-scale machine learning with stochastic gradient descent.
\newblock In {\em Proceedings of COMPSTAT'2010}, pages 177--186. Springer.

\bibitem[Cai et~al., 2017]{cai2017computational}
Cai, T.~T., Liang, T., Rakhlin, A., et~al. (2017).
\newblock Computational and statistical boundaries for submatrix localization
  in a large noisy matrix.
\newblock {\em The Annals of Statistics}, 45(4):1403--1430.

\bibitem[Calandriello et~al., 2017]{calandriello2017efficient}
Calandriello, D., Lazaric, A., and Valko, M. (2017).
\newblock Efficient second-order online kernel learning with adaptive
  embedding.
\newblock In {\em Advances in Neural Information Processing Systems}, pages
  6140--6150.

\bibitem[Caponnetto and De~Vito, 2007]{caponnetto2007optimal}
Caponnetto, A. and De~Vito, E. (2007).
\newblock Optimal rates for the regularized least-squares algorithm.
\newblock {\em Foundations of Computational Mathematics}, 7(3):331--368.

\bibitem[Carl, 1981]{CARL1981290}
Carl, B. (1981).
\newblock Entropy numbers, s-numbers, and eigenvalue problems.
\newblock {\em Journal of Functional Analysis}, 41(3):290--306.

\bibitem[Carl and Stephani, 1990]{carl_stephani_1990}
Carl, B. and Stephani, I. (1990).
\newblock {\em Entropy, Compactness and the Approximation of Operators}.
\newblock Cambridge Tracts in Mathematics. Cambridge University Press.

\bibitem[Christmann and Steinwart, 2008]{christmann2008support}
Christmann, A. and Steinwart, I. (2008).
\newblock Support vector machines.

\bibitem[Cohn, 2013]{cohn2013measure}
Cohn, D.~L. (2013).
\newblock {\em Measure theory}.
\newblock Springer.

\bibitem[Cucker and Smale, 2002]{cucker2002mathematical}
Cucker, F. and Smale, S. (2002).
\newblock On the mathematical foundations of learning.
\newblock {\em Bulletin of the American mathematical society}, 39(1):1--49.

\bibitem[Dieuleveut and Bach, 2016]{dieuleveut2016nonparametric}
Dieuleveut, A. and Bach, F. (2016).
\newblock Nonparametric stochastic approximation with large step-sizes.
\newblock {\em The Annals of Statistics}, 44(4):1363--1399.

\bibitem[Duchi, 2014]{duchi2014multiple}
Duchi, J.~C. (2014).
\newblock {\em Multiple Optimality Guarantees in Statistical Learning}.
\newblock PhD thesis, UC Berkeley.

\bibitem[Dũng et~al., 2017]{dung2017hyperbolic}
Dũng, D., Temlyakov, V.~N., and Ullrich, T. (2017).
\newblock Hyperbolic cross approximation.

\bibitem[Eubank and Speckman, 1990]{Eubank1990CurveFB}
Eubank, R. and Speckman, P. (1990).
\newblock Curve fitting by polynomial-trigonometric regression.
\newblock {\em Biometrika}, 77:1--9.

\bibitem[Fasshauer and McCourt, 2015]{fasshauer2015kernel}
Fasshauer, G.~E. and McCourt, M.~J. (2015).
\newblock {\em Kernel-based approximation methods using Matlab}, volume~19.
\newblock World Scientific Publishing Company.

\bibitem[Frostig et~al., 2015]{frostig2015competing}
Frostig, R., Ge, R., Kakade, S.~M., and Sidford, A. (2015).
\newblock Competing with the empirical risk minimizer in a single pass.
\newblock In {\em Conference on learning theory}, pages 728--763.

\bibitem[Gaillard and Gerchinovitz, 2015]{gaillard2015chaining}
Gaillard, P. and Gerchinovitz, S. (2015).
\newblock A chaining algorithm for online nonparametric regression.
\newblock In {\em Conference on Learning Theory}, pages 764--796.

\bibitem[Gao et~al., 2015]{gao2015minimax}
Gao, C., Ma, Z., Ren, Z., Zhou, H.~H., et~al. (2015).
\newblock Minimax estimation in sparse canonical correlation analysis.
\newblock {\em The Annals of Statistics}, 43(5):2168--2197.

\bibitem[Gao et~al., 2017]{gao2017sparse}
Gao, C., Ma, Z., Zhou, H.~H., et~al. (2017).
\newblock Sparse cca: Adaptive estimation and computational barriers.
\newblock {\em The Annals of Statistics}, 45(5):2074--2101.

\bibitem[Geer and van~de Geer, 2000]{geer2000empirical}
Geer, S.~A. and van~de Geer, S. (2000).
\newblock {\em Empirical Processes in M-estimation}, volume~6.
\newblock Cambridge university press.

\bibitem[Gy{\"o}rfi et~al., 2006]{gyorfi2006distribution}
Gy{\"o}rfi, L., Kohler, M., Krzyzak, A., and Walk, H. (2006).
\newblock {\em A distribution-free theory of nonparametric regression}.
\newblock Springer Science \& Business Media.

\bibitem[Hall and Opsomer, 2005]{hall2005theory}
Hall, P. and Opsomer, J.~D. (2005).
\newblock Theory for penalised spline regression.
\newblock {\em Biometrika}, 92(1):105--118.

\bibitem[H{\"a}rdle et~al., 2012]{hardle2012wavelets}
H{\"a}rdle, W., Kerkyacharian, G., Picard, D., and Tsybakov, A. (2012).
\newblock {\em Wavelets, approximation, and statistical applications}, volume
  129.
\newblock Springer Science \& Business Media.

\bibitem[Hastie et~al., 2009]{hastie2009elements}
Hastie, T., Tibshirani, R., and Friedman, J. (2009).
\newblock {\em The elements of statistical learning: data mining, inference,
  and prediction}.
\newblock Springer Science \& Business Media.

\bibitem[Hern{\'a}ndez and Weiss, 1996]{hernandez1996first}
Hern{\'a}ndez, E. and Weiss, G. (1996).
\newblock {\em A first course on wavelets}.
\newblock CRC press.

\bibitem[Kennedy et~al., 2013]{kennedy2013classification}
Kennedy, R.~A., Sadeghi, P., Khalid, Z., and McEwen, J.~D. (2013).
\newblock Classification and construction of closed-form kernels for signal
  representation on the 2-sphere.
\newblock In {\em Wavelets and Sparsity XV}, volume 8858, page 88580M.
  International Society for Optics and Photonics.

\bibitem[Kolmogorov and Tikhomirov, 1959]{kolmogorov1959varepsilon}
Kolmogorov, A.~N. and Tikhomirov, V.~M. (1959).
\newblock $\varepsilon$-entropy and $\varepsilon$-capacity of sets in function
  spaces.
\newblock {\em Uspekhi Matematicheskikh Nauk}, 14(2):3--86.

\bibitem[Koppel et~al., 2019]{koppel2019parsimonious}
Koppel, A., Warnell, G., Stump, E., and Ribeiro, A. (2019).
\newblock Parsimonious online learning with kernels via sparse projections in
  function space.
\newblock {\em The Journal of Machine Learning Research}, 20(1):83--126.

\bibitem[Kushner and Yin, 2003]{kushner2003stochastic}
Kushner, H. and Yin, G.~G. (2003).
\newblock {\em Stochastic approximation and recursive algorithms and
  applications}, volume~35.
\newblock Springer Science \& Business Media.

\bibitem[Liang and Rakhlin, 2018]{liang2018just}
Liang, T. and Rakhlin, A. (2018).
\newblock Just interpolate: Kernel" ridgeless" regression can generalize.
\newblock {\em arXiv preprint arXiv:1808.00387}.

\bibitem[Lin et~al., 2000]{lin2000tensor}
Lin, Y. et~al. (2000).
\newblock Tensor product space anova models.
\newblock {\em The Annals of Statistics}, 28(3):734--755.

\bibitem[Lu et~al., 2016]{lu2016large}
Lu, J., Hoi, S.~C., Wang, J., Zhao, P., and Liu, Z.-Y. (2016).
\newblock Large scale online kernel learning.
\newblock {\em The Journal of Machine Learning Research}, 17(1):1613--1655.

\bibitem[Ma et~al., 2015]{ma2015computational}
Ma, Z., Wu, Y., et~al. (2015).
\newblock Computational barriers in minimax submatrix detection.
\newblock {\em The Annals of Statistics}, 43(3):1089--1116.

\bibitem[Marshall et~al., 1979]{marshall1979inequalities}
Marshall, A.~W., Olkin, I., and Arnold, B.~C. (1979).
\newblock {\em Inequalities: theory of majorization and its applications},
  volume 143.
\newblock Springer.

\bibitem[Marteau-Ferey et~al., 2019a]{marteau2019globally}
Marteau-Ferey, U., Bach, F., and Rudi, A. (2019a).
\newblock Globally convergent newton methods for ill-conditioned generalized
  self-concordant losses.
\newblock In {\em Advances in Neural Information Processing Systems}, pages
  7634--7644.

\bibitem[Marteau-Ferey et~al., 2019b]{marteau2019beyond}
Marteau-Ferey, U., Ostrovskii, D., Bach, F., and Rudi, A. (2019b).
\newblock Beyond least-squares: Fast rates for regularized empirical risk
  minimization through self-concordance.
\newblock {\em arXiv preprint arXiv:1902.03046}.

\bibitem[Michel, 2012]{michel2012lectures}
Michel, V. (2012).
\newblock {\em Lectures on Constructive Approximation: Fourier, Spline, and
  Wavelet Methods on the Real Line, the Sphere, and the Ball}.
\newblock Springer Science \& Business Media.

\bibitem[Mikusinski and Weiss, 2014]{mikusinski2014bochner}
Mikusinski, P. and Weiss, E. (2014).
\newblock The bochner integral.
\newblock {\em arXiv preprint arXiv:1403.5209}.

\bibitem[Nemirovski, 2000]{nemirovski2000topics}
Nemirovski, A. (2000).
\newblock Topics in non-parametric.
\newblock {\em Ecole d’Et{\'e} de Probabilit{\'e}s de Saint-Flour}, 28:85.

\bibitem[Novak and Wozniakowski, 2008]{novak2008tractability}
Novak, E. and Wozniakowski, H. (2008).
\newblock {\em Tractability of multivariate problems. Vol. 1: Linear
  information}.

\bibitem[Overton, 2001]{overton2001numerical}
Overton, M.~L. (2001).
\newblock {\em Numerical computing with IEEE floating point arithmetic}.
\newblock SIAM.

\bibitem[Rakhlin and Sridharan, 2015]{rakhlin2015online}
Rakhlin, A. and Sridharan, K. (2015).
\newblock Online nonparametric regression with general loss functions.
\newblock {\em arXiv preprint arXiv:1501.06598}.

\bibitem[Raskutti et~al., 2009]{raskutti2009lower}
Raskutti, G., Yu, B., and Wainwright, M.~J. (2009).
\newblock Lower bounds on minimax rates for nonparametric regression with
  additive sparsity and smoothness.
\newblock {\em Advances in Neural Information Processing Systems},
  22:1563--1570.

\bibitem[Schmeisser, 2007]{schmeisser2007recent}
Schmeisser, H.-J. (2007).
\newblock Recent developments in the theory of function spaces with dominating
  mixed smoothness.
\newblock {\em Nonlinear Analysis, Function Spaces and Applications}, pages
  145--204.

\bibitem[Shen and Wang, 2010]{shen2010sparse}
Shen, J. and Wang, L.-L. (2010).
\newblock Sparse spectral approximations of high-dimensional problems based on
  hyperbolic cross.
\newblock {\em SIAM Journal on Numerical Analysis}, 48(3):1087--1109.

\bibitem[Shen, 1997]{shen1997methods}
Shen, X. (1997).
\newblock On methods of sieves and penalization.
\newblock {\em The Annals of Statistics}, pages 2555--2591.

\bibitem[Si et~al., 2018]{si2018nonlinear}
Si, S., Kumar, S., and Li, Y. (2018).
\newblock Nonlinear online learning with adaptive nystr$\backslash$"$\{$o$\}$ m
  approximation.
\newblock {\em arXiv preprint arXiv:1802.07887}.

\bibitem[Steinwart and Scovel, 2012]{steinwart2012mercer}
Steinwart, I. and Scovel, C. (2012).
\newblock Mercer’s theorem on general domains: On the interaction between
  measures, kernels, and rkhss.
\newblock {\em Constructive Approximation}, 35(3):363--417.

\bibitem[Stone, 1985]{stone1985additive}
Stone, C.~J. (1985).
\newblock Additive regression and other nonparametric models.
\newblock {\em The annals of Statistics}, pages 689--705.

\bibitem[Sun, 2005]{sun2005mercer}
Sun, H. (2005).
\newblock Mercer theorem for rkhs on noncompact sets.
\newblock {\em Journal of Complexity}, 21(3):337--349.

\bibitem[Tarkhan and Simon, 2020]{tarkhan2020bigsurvsgd}
Tarkhan, A. and Simon, N. (2020).
\newblock Bigsurvsgd: Big survival data analysis via stochastic gradient
  descent.
\newblock {\em arXiv preprint arXiv:2003.00116}.

\bibitem[Tarres and Yao, 2014]{tarres2014online}
Tarres, P. and Yao, Y. (2014).
\newblock Online learning as stochastic approximation of regularization paths:
  Optimality and almost-sure convergence.
\newblock {\em IEEE Transactions on Information Theory}, 60(9):5716--5735.

\bibitem[Tsybakov, 2008]{introtononpara}
Tsybakov, A. (2008).
\newblock {\em Introduction to Nonparametric Estimation}.
\newblock Springer Science \& Business Media.

\bibitem[Vempala, 2005]{vempala2005random}
Vempala, S.~S. (2005).
\newblock {\em The random projection method}, volume~65.
\newblock American Mathematical Soc.

\bibitem[Vershynin, 2018]{vershynin2018high}
Vershynin, R. (2018).
\newblock {\em High-dimensional probability: An introduction with applications
  in data science}, volume~47.
\newblock Cambridge university press.

\bibitem[Wahba, 1990]{wahba1990spline}
Wahba, G. (1990).
\newblock {\em Spline models for observational data}, volume~59.
\newblock Siam.

\bibitem[Wainwright, 2019]{wainwright2019high}
Wainwright, M.~J. (2019).
\newblock {\em High-dimensional statistics: A non-asymptotic viewpoint},
  volume~48.
\newblock Cambridge University Press.

\bibitem[Wang et~al., 2016]{wang2016statistical}
Wang, T., Berthet, Q., Samworth, R.~J., et~al. (2016).
\newblock Statistical and computational trade-offs in estimation of sparse
  principal components.
\newblock {\em The Annals of Statistics}, 44(5):1896--1930.

\bibitem[Wood, 2017]{wood2017generalized}
Wood, S.~N. (2017).
\newblock {\em Generalized additive models: an introduction with R}.
\newblock CRC press.

\bibitem[Ying and Pontil, 2008]{ying2008online}
Ying, Y. and Pontil, M. (2008).
\newblock Online gradient descent learning algorithms.
\newblock {\em Foundations of Computational Mathematics}, 8(5):561--596.

\bibitem[Yuan et~al., 2010]{yuan2010reproducing}
Yuan, M., Cai, T.~T., et~al. (2010).
\newblock A reproducing kernel hilbert space approach to functional linear
  regression.
\newblock {\em The Annals of Statistics}, 38(6):3412--3444.

\bibitem[Yuan and Zhou, 2016]{yuan2016minimax}
Yuan, M. and Zhou, D.-X. (2016).
\newblock Minimax optimal rates of estimation in high dimensional additive
  models.
\newblock {\em The Annals of Statistics}, 44(6):2564--2593.

\bibitem[Zhang and Simon, 2021]{zhang2021online}
Zhang, T. and Simon, N. (2021).
\newblock An online projection estimator for nonparametric regression in
  reproducing kernel hilbert spaces.
\newblock {\em arXiv preprint arXiv:2104.00780}.

\bibitem[Zhang et~al., 2014]{zhang2014lower}
Zhang, Y., Wainwright, M.~J., and Jordan, M.~I. (2014).
\newblock Lower bounds on the performance of polynomial-time algorithms for
  sparse linear regression.
\newblock In {\em Conference on Learning Theory}, pages 921--948.

\end{thebibliography}

\end{document}